\renewcommand{\l}{\ell}
\theoremstyle{plain}
\numberwithin{equation}{subsection}
\newtheorem{thm}[equation]{Theorem}
\newtheorem{prop}[equation]{Proposition}
\newtheorem{lemma}[equation]{Lemma}
\newtheorem{Definition}[equation]{Definition}
\newtheorem{Remark}[equation]{Remark}
\newtheorem{cor}[equation]{Corollary}
\theoremstyle{remark}
\newtheorem{para}[equation]{\bf}
\theoremstyle{plain}
\renewcommand{\subsubsection}{\addtocounter{equation}{1}{\vskip 6pt \noindent\  \bf\arabic{section}.\arabic{subsection}.\arabic{equation}}\,}
\theoremstyle{definition}
\newcommand{\quash}[1]{}  
\newcommand{\nc}{\newcommand}
\nc{\on}{\operatorname}
\newcommand{\cal}{\mathcal}
\newcommand{\E}{{\mathscr E}}
\newcommand{\GG}{{\mathscr G}}
\newcommand{\Q}{{\mathbb Q}}
\newcommand{\Gg}{{\mathcal G}}
\newcommand{\Z}{{\mathbb Z}}
\newcommand{\F}{{\mathscr F}}
\newcommand{\ti}{\widetilde}
\newcommand{\Spec}{{\rm Spec } }
 \renewcommand{\O}{{\mathcal O}}
\newcommand\al{\vartheta^{\ell}}
\newcommand\ct{{\rm ct}}
\newcommand{\Gr}{{\rm G}}
\newcommand{\und}{\underline}
\newcommand{\pfr}{{\mathfrak p}}
\newcommand{\M}{{\mathcal M}}
\newcommand{\Kr}{{\rm K}}
\newcommand{\UU}{{\mathcal U}}
\newcommand{\Pic}{{\rm Pic}}
\newcommand{\CNT}{{\rm CNT}}
\newcommand{\Zl}{{\Z'}}
\newcommand{\DD}{{\mathcal D}}
\newcommand{\ov}{\overline}
\newcommand{\scrR}{{\mathscr R}}
\newcommand{\cl}{{\rm cl}}
\def\thfill{\null\nobreak\hfill}
\def\endproof{\thfill\vbox{\hrule
  \hbox{\vrule\hbox to 5pt{\vbox to 5pt{\vfil}\hfil}\vrule}\hrule}}
 \renewcommand{\mod}{ { \rm\,  mod\,} }
\newcommand{\Cl}{{\mathrm {Cl}}}
\newcommand{\Gal}{{\rm Gal}}
\begin{document}

 \title[Adams operations and Galois structure]{Adams operations and   Galois structure
 }
\author[G. Pappas]{G. Pappas}
\thanks{{\it Key words:}  Galois cover, Galois module, Riemann-Roch theorem, Euler characteristic}
\thanks{{\it MSC 2010: } Primary: 14L30, 14C40, 11S23, Secondary: 20C10, 14C35, 19E08, 11R33, 14F05}
 \thanks{Partially supported by NSF
grant  DMS11-02208}
\address{ Dept. of Mathematics\\ Michigan State University\\ E. Lansing\\ MI 48824-1027\\ USA}
  \date{\today}

 \begin{abstract}
 We present a new method for determining the Galois module structure 
 of the cohomology of coherent sheaves on varieties over the integers
 with a tame action of a finite group. This uses  
a novel Adams-Riemann-Roch type theorem obtained by combining the 
 K\"unneth formula with localization in equivariant $\rm K$-theory
 and classical results about cyclotomic fields. 
As an application, we show two conjectures of 
 \cite{CPTAnnals}, in the case of curves.

 \end{abstract}
 \maketitle

 \tableofcontents

\section{Introduction}

In this paper, we consider $G$-covers $\pi: X\to Y$ 
of a projective flat scheme $Y\to \Spec(\Z)$ where $G$ is a finite group. Let $\F$ be a 
$G$-equivariant coherent sheaf of
$\O_X$-modules on $X$, \emph{i.e.} a coherent   $\O_X$-module equipped  with a $G$-action compatible with the $G$-action
on the scheme $X$. 
Our main objects of study are the Zariski cohomology groups ${\rm H}^i(X,\F)$.
These are finitely generated modules
for the integral group ring $\Z[G]$. 

Let us consider the
total cohomology ${\rm R}\Gamma(X,\F)$ in the derived category
 of complexes of $\Z[G]$-modules which are bounded below. If $\pi$ is  tamely ramified,
then  the complex ${\rm R}\Gamma(X,\F)$ is ``perfect", \emph{i.e.}  isomorphic
to a bounded complex $(P^\bullet)$ of finitely generated {\sl projective} $\Z[G]$-modules
 (\cite{ChinburgTameAnnals}, \cite{ChinburgErez}, \cite{PMathAnn}).
This observation goes back to E. Noether when $X$ is the spectrum of the ring of integers 
of a number field;
in this general set-up it is due to T.~Chinburg.

\begin{Definition} {\sl We say that the cohomology of $\F$ has a normal integral basis
if there exists a bounded complex $(F^\bullet)$ of finitely generated
{\rm free} $\Z[G]$-modules which is  isomorphic to 
 ${\rm R}\Gamma(X,\F)$.}
 \end{Definition}
 
To measure the obstruction to the existence
of a normal integral basis we use the Grothendieck group $\Kr_0(\Z[G])$
of finitely generated projective $\Z[G]$-modules. We consider the projective class
group $\Cl(\Z[G])$ which is defined as the quotient of $\Kr_0(\Z[G])$
by the subgroup generated by the class of the free module $\Z[G]$.
The obstruction to the existence
of a normal integral basis for the cohomology of $\F$ is given by the (stable)
projective Euler characteristic 
$$
\overline\chi(X, \F)=\sum\nolimits_{i}(-1)^i[P^i]\ \in\ \Cl(\Z[G]) 
$$
which is independent of the choice of $(P^\bullet)$.

The main problem in the  theory of geometric Galois structure is to
understand such Euler characteristics. Often there are interesting connections
with other invariants of $X$. For example, it was shown in \cite{CEPTAnnals}
that,
under some additional hypotheses, the projective Euler characteristic
of a version of the deRham complex of $X$
can be calculated using $\epsilon$-factors of Hasse-Weil-Artin $\rm L$-functions
for the cover $X\to X/G$. Also, when $X$ is a curve over $\Z$,
the obstruction $\overline\chi(X, \O_X)$ is related, via a
suitable equivariant version of the Birch and Swinnerton-Dyer conjecture,
to the $G$-module structure 
of the Mordell-Weil and Tate-Shafarevich groups of the Jacobian of 
the generic fiber $X_\Q$
(\cite{CPTAnnals}).

We will first discuss the case when $\pi: X\to Y$ is unramified.
Let us denote by $d$ the relative dimension of $Y\to \Spec(\Z)$.

When $d=0$, the  problem
 of the existence of a normal integral basis reduces to a classical 
 question: Suppose that $N/K$ is an unramified Galois extension 
 of number fields with Galois group $G$ and consider the ring of integers $\O_N$ 
which is then a projective $\Z[G]$-module.
Take $X=\Spec(\O_N)$, $Y=\Spec(\O_K)$, $\F=\O_X$;
 then the Euler characteristic $\overline\chi(X,\O_X)$ is  the class 
$[\O_N]$ in $\Cl(\Z[G])$.  We are then  asking if $\O_N$ is ``stably free", \emph{i.e.}  if there are integers $n$ and $m$ such that
 $\O_N\oplus \Z[G]^n\simeq \Z[G]^m$. Results of A.~Fr\"ohlich and M.~Taylor 
imply that $[\O_N]$ is always $2$-torsion in $\Cl(\Z[G])$
and is trivial when the group $G$ has no irreducible
symplectic representations; this result also holds when, more generally,  $N/K$ is tamely ramified.  
Indeed, if $N/K$ is at most tamely ramified, Fr\"ohlich's conjecture (shown  by M.~Taylor \cite{MJTFrohlichConj})
explains how  to determine the class $[\O_N]$ from
the root numbers of Artin $\rm L$-functions for  irreducible symplectic representations of ${\rm Gal}(N/K)$.
In particular, ${\rm gcd}(2,\#G)\cdot [\O_N]=0$.
When $G$ is of odd order, $[\O_N]=0$ and  $\O_N$ is stably free;
when $G$ is of odd order, it then 
 follows that $\O_N$ is actually a free $\Z[G]$-module.
In general, the class of a projective module in the class group $\Cl(\Z[G])$ contains a lot of information about the isomorphism class of the module. Hence, the projective Euler characteristics that we consider in this paper 
also contain a lot of information about the Galois modules given by cohomology.  
This is not necessarily the case for the ``naive" Euler characteristics that can be easily defined
as classes in the weaker Grothendieck group of all finitely generated $G$-modules.

When $d>0$, 
some progress towards calculating  $\overline\chi(X, \F)$ for general $\F$ was achieved
after the introduction of the technique of cubic structures (\cite{PaCubeInvent}, \cite{PMathAnn}, \cite{CPTAnnals};
the paper \cite{CEPTAnnals} only dealt with  the deRham complex).
This technique is very effective when {\sl all} the Sylow subgroups of $G$ are abelian. In particular, it allowed us to 
show
that, under this hypothesis,  ${\rm gcd}(2,\#G)\cdot \bar\chi(X, \F)=0$ if $d=1$ 
 (\cite{PaCubeInvent}). 
Some  general results were obtained in \cite{PMathAnn} when $d>1$
but  the problem appears to be quite hard. In fact, as is explained in \emph{loc. cit.},
 it is plausible that the statement that $\overline\chi(X, \F)=0$ for $G$ abelian and
 all $X$ of dimension $<\#G$  is equivalent to the truth of Vandiver's conjecture for all prime divisors of 
the order $\#G$. 
In \cite{PMathAnn}, \cite{CPTAnnals}, it was conjectured
that for all $G$, there are integers $N$ (that depends only on $d$) and $\delta$
(that depends only on $\#G$), such that ${\rm gcd}(N, \#G)^\delta\cdot \overline\chi(X, \F)=0$;
this was shown for $G$ with abelian Sylows. 

In this paper, we introduce a new method
 that allows us to handle non-abelian groups.
 We obtain strong results, and,
in the case $d=1$ of curves over $\Z$,  a proof of the above conjecture. 
This is based on Adams-Riemann-Roch 
type identities which are proven using the  K\"unneth formula
and  ``localization" or ``concentration" theorems 
(as given for example by Thomason) in equivariant $\rm K$-theory.
We combine these
  with a study of the action of the Adams--Cassou-Nogu\`es--Taylor operations on $\Cl(\Z[G])$ and
 some classical algebraic number theory.
 
 Write $\#G=2^s3^tm$, with $m$ relatively prime to $6$, and define $a$, $b$ as follows: If the $2$-Sylow subgroups of $G$ have order $\leq 4$,  are cyclic of order $8$, or are dihedral groups, set $a=0$.  If the $2$-Sylow subgroups of $G$ are abelian (but not as in the case above), or  
generalized quaternion,  or  semi-dihedral groups, set $a=1$. 
In all other cases, set $a=s+1$. If the 
$3$-Sylows subgroups of $G$ are abelian, set $b=0$.
In all other cases, set $b=t-1$.
 
\begin{thm}\label{main1}
Let $X\to Y$ be an unramified $G$-cover, with $Y\to \Spec(\Z)$ 
projective and flat of relative dimension $1$. Let $\F$ be a $G$-equivariant coherent 
$\O_X$-module.  For $a$ and $b$ as above,  we have $2^a3^{b}\cdot \overline\chi(X,\F)=0$.
  In particular, if $G$ has order prime to $6$, then $\overline\chi(X,\F)=0$.
\end{thm}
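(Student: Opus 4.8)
The plan is to reduce to the case of a $p$-group, then to prove an Adams--Riemann--Roch identity for the unramified cover by combining the K\"unneth formula with Thomason's concentration theorem, and finally to extract the annihilator using the Adams--Cassou-Nogu\`es--Taylor operations on $\Cl(\Z[G])$ together with classical facts about cyclotomic fields.

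\emph{Step 1: reduction to $p$-groups.} The group $\Cl(\Z[G])$ is finite, hence the direct sum of its $p$-primary parts. For a prime $p$ let $G_p\le G$ be a Sylow $p$-subgroup; then $X\to X/G_p$ is again an unramified cover with $X/G_p\to\Spec(\Z)$ projective, flat of relative dimension $1$ (the map $X\to X/G_p$ is finite \'etale), and ${\rm R}\Gamma(X,\F)$, viewed over $\Z[G_p]$, computes $\overline\chi_{G_p}(X,\F)$. Since $\mathrm{cor}\circ\mathrm{res}$ is multiplication by the prime-to-$p$ integer $[G:G_p]$, restriction is injective on $p$-primary parts and sends the $p$-part of $\overline\chi(X,\F)$ to that of $\overline\chi_{G_p}(X,\F)$. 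So it suffices to show that for a $p$-group $P$ the $p$-primary part of $\overline\chi(X,\F)\in\Cl(\Z[P])$ is killed by $2^{a}$ if $p=2$, by $3^{b}$ if $p=3$, and vanishes if $p\ge 5$; summing over $p$ then gives $2^a3^b\cdot\overline\chi(X,\F)=0$, and the last assertion when $\gcd(\#G,6)=1$.

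\emph{Step 2: the Adams--Riemann--Roch identity.} Since $P$ acts freely on $X$, the functor $\pi^{*}$ identifies $P$-equivariant coherent $\O_X$-modules with coherent $\O_Y$-modules; writing $\F=\pi^{*}\Gg$ and $\mathcal A=\pi_{*}\O_X$ (an \'etale twist of $\O_Y[P]$, hence fixed by Adams operations) one has ${\rm R}\Gamma(X,\F)\cong{\rm R}\Gamma(Y,\Gg\otimes_{\O_Y}\mathcal A)$ over $\Z[P]$. For an integer $k$ prime to $p$, apply the K\"unneth formula to the $k$-fold product $X\times_{\Z}\cdots\times_{\Z}X$, on which $\Z/k$ acts by cyclic permutation and $P$ diagonally: this gives the complex ${\rm R}\Gamma(X,\F)^{\otimes k}$ (tensor over $\Z$) with its $\Z/k$-action, whose isotypic components recover $\psi^{k}\bigl(\overline\chi(X,\F)\bigr)$ through the formalism of power operations in $\Kr$-theory. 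The error coming from the non-smoothness of $Y\to\Spec(\Z)$ and from the fixed loci of the $\Z/k$-action is controlled by Thomason's concentration theorem, which here only requires inverting $k$ --- and since $\gcd(k,p)=1$ this does not affect the $p$-primary part of $\Cl(\Z[P])$. Because $P$ acts freely on $X$, the relevant fixed loci sit on partial diagonals that reduce to products of copies of $X$ over $\Spec(\Z)$, of relative dimension $1$, and the outcome is an identity of the shape
$$
\psi^{k}\bigl(\overline\chi(X,\F)\bigr)\;=\;k\cdot\overline\chi(X,\F)\;+\;\varepsilon_{k}\qquad\text{in }\Cl(\Z[P])_{(p)},
$$
with an explicit correction $\varepsilon_{k}$ divisible by $k-1$, built from Euler characteristics of line-bundle twists and from the top terms of Bott's cannibalistic classes. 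This is the technical heart of the paper; the ordinary Adams--Riemann--Roch theorem is unavailable because $Y\to\Spec(\Z)$ need not be smooth and $\Z[P]$ is not regular.

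\emph{Step 3: extracting the annihilator.} For $k$ prime to $p$ the operations $\psi^k$ factor through $k\bmod p^{c}$, where $p^{c}$ is the exponent of $P$, and under Fr\"ohlich's Hom-description they act on $\Cl(\Z[P])_{(p)}$ through $\Gal(\Q(\zeta_{p^{c}})/\Q)\cong(\Z/p^{c})^{\times}$ acting on the id\`ele class groups of the simple factors of $\Q[P]$. Feeding the identity of Step 2 into this action for enough $k$ prime to $p$ (with product prime to $p$, so that inverting them is harmless) pins $\overline\chi(X,\F)_{(p)}$ down to a single Galois eigencomponent; the part of this coming from the honestly cyclotomic simple factors lies in a minus part and is annihilated by Stickelberger's theorem. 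For $p\ge 5$ nothing is left, so $\overline\chi(X,\F)_{(p)}=0$, and $\overline\chi(X,\F)=0$ once $\gcd(\#G,6)=1$. For $p=2,3$ there remain the non-commutative simple factors and the kernel subgroup $D(\Z[P])$; these are bounded by combining the Step-2 identity with: the cubic-structure estimate of \cite{PaCubeInvent} when $P$ is abelian (which already gives $\gcd(2,\#P)\cdot\overline\chi=0$, hence $b=0$, $a\le 1$); the inequality $p^{c}\le p^{n-1}$ valid for non-cyclic, hence non-abelian, $P$ of order $p^{n}$; and, for the exceptional $2$-groups (order $\le 4$, cyclic of order $8$, dihedral, generalized quaternion, semi-dihedral), the known structure of $\Cl(\Z[P])$, whose rational group algebra involves only cyclotomic fields of $2$-power conductor with class number $1$ and quaternion division algebras treated by Fr\"ohlich's theory. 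Assembling these inputs yields exactly the exponents $a$ and $b$ in the statement. The main obstacle throughout is Step 2: establishing the Adams--Riemann--Roch identity integrally and controlling $\varepsilon_{k}$ $p$-adically, the prime $p=2$ being the most delicate.
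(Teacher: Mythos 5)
Your Step 1 (Sylow reduction via restriction/corestriction and localization of the $\Gr_0(\Z[G])$-module $\Cl(\Z[G])$) matches the paper. The trouble starts in Steps 2 and 3, and the errors there are not cosmetic.

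\emph{The Adams--Riemann--Roch identity has the wrong shape.} For a relative curve the Bott class $\theta^{\l}(T^\vee_X)$ has rank $\l$, so its inverse contributes $\l^{-1}$ (not $\l$) to the generic term. The paper's identity (after a Nakajima-type vanishing on fibral cycles and a Taylor input at $d=0$) is of the form $(\l-1)^2(\psi^\l-\l^{-1})\,\overline\chi(X,\F)=0$ in $\Cl(\Zl[G])'$ (Theorem 6.1.2 plus Remark 6.1.3); in other words, modulo $(\l-1)$-phenomena one has $\psi^\l\overline\chi \doteq \l^{-1}\overline\chi$, the opposite power from your $\psi^k\overline\chi=k\cdot\overline\chi+\varepsilon_k$. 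Translating through Proposition 4.3.2 ($\psi^\l = \l\,\psi^{\rm CNT}_{\l'}$ on rank-$0$ classes) shows $\overline\chi$ must lie in the \emph{second} Teichm\"uller eigenspace $\Cl(\Z[G])_p^{(2)}$, not the first. Also, since $\l$ is a prime, the cyclic group $C_\l$ has no proper nontrivial subgroups and the only $C_\l$-fixed locus in $X^\l$ is the full diagonal $\Delta(X)$; there are no ``partial diagonals'' to control.

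\emph{The number theory that finishes the argument is not Stickelberger.} The eigenspace $\Cl(\Z[G])_p^{(2)}$ corresponds to the \emph{even} (plus-part) Teichm\"uller component of the $p$-part of cyclotomic class groups, on which the Stickelberger element vanishes and Stickelberger's theorem gives no annihilation whatsoever. The paper instead uses: (i) Herbrand's theorem and $B_2=1/6$ to kill the (odd) $A^{(p-2)}$ component; (ii) the reflection theorem to deduce $A^{(2)}=0$; and (iii) Iwasawa's theorem on cyclotomic units (that $(\Z_p[\zeta_{p^m}]^*)_p^{(2)} = \widehat{(\Z[\zeta_{p^m}]^*)}_p^{(2)}$ when $p\nmid B_2$) to kill the kernel group $D(\Z[G])^{(2)}$. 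None of this is Stickelberger, and for $p=3$ one has $\omega^2=\omega^0$ and must instead invoke Oliver's bound on $D(\Z[G])^{(0)}_p$ by the Artin exponent. Your exponent-bound observation $p^c\le p^{n-1}$ for non-cyclic groups is a red herring: what bounds the $3$-part is $A(G_3)=3^{n-1}$ as an annihilator of $D(\Z[G_3])^{(0)}$, not the group exponent.

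\emph{One more gap.} You assert that inverting $k$ prime to $p$ is ``harmless'' for the $p$-primary part. This is false in general: $\Cl(\Z[G])\to\Cl(\Z[1/\l][G])$ can have a nontrivial kernel even on $p$-parts, because inverting $\l$ changes the Picard-type local data at $\l$. The paper needs a \v{C}hebotarev argument (Proposition 6.2.2) to produce infinitely many $\l$ that simultaneously generate $(\Z/p\Z)^*$, satisfy $\l^{p-1}\equiv1\bmod p^N$, and make $\Cl(\Z[G])_p\to\Cl(\Z[1/\l][G])_p$ an isomorphism. Without some argument of this type, you cannot transport the eigenvalue information back from $\Cl(\Zl[G])$ to $\Cl(\Z[G])$.
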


In particular, this implies, in this case of free $G$-action,  the ``Localization Conjecture" of \cite{CPTAnnals} for $d=1$ with
 $N=6$ and $\delta={\mathrm{max}(s,t)}+1$, and significantly
extends the   results of \cite{PaCubeInvent}. For example,  the hypothesis 
``all Sylows are abelian" in  \cite[Theorem 5.2 (a),
Cor. 5.3 and Theorem 5.5 (a)]{PaCubeInvent} can be relaxed. Hence, we obtain 
a ``projective normal integral basis theorem" when $d=1$ and the action is free:
If $X$ is normal and $G$ is such that $a=b=0$, 
then $X\cong {\bf Proj}(\oplus_{n\geq 0} A_n)$
with $A_n$, for all $n>0$, free $\Z[G]$-modules. We can also give a result for
the $G$-module of regular differentials of $X$: Suppose that $X$ is normal;
then $X$ is Cohen-Macaulay and we can consider the relative dualizing sheaf 
$\omega_X$ of $X\to \Spec(\Z)$.
Suppose that 
$\{g_1,\ldots, g_r\}$
is a  set of generators of $G$. Then $\{g_1-1,\ldots, g_r-1\}$ is a set of generators of the augmentation ideal ${\mathfrak g}\subset \Z[G]$; this gives a surjective $\phi: \Z[G]^{r_G}\to {\mathfrak g}$,
where $r_G$ is the minimal number of generators of $G$.
Denote by $\Omega^2(\Z)$ the kernel of $\phi$. By Schanuel's lemma, 
the stable isomorphism class of $\Omega^2(\Z)$ is independent of the choice
of generators. 
As in \cite[Theorem 5.5 (a)]{PaCubeInvent},  we will see that Theorem \ref{main1}   implies:

\begin{thm}\label{mainDiffthm} 
Let $X\to Y$ be an unramified $G$-cover, with $Y\to \Spec(\Z)$ 
projective and flat of relative dimension $1$. Assume that $X$ is normal,
 $X_\Q$ is smooth, and that $G$ acts trivially on ${\rm H}^0(X_\Q, \O_{X_{\Q}})$.
 Assume also that ${\rm H}^1(X, \omega_X)$
is torsion free. Set $h=\dim_\Q{\rm H}^0(X_\Q, \O_{X_\Q})$ and  $g_Y={\rm genus}(Y_\Q)=\dim_{\Q}{\rm H}^0(Y_\Q, \Omega^1_{Y_\Q})$. If $G$ is such that $a=b=0$, then
the $\Z[G]$-module ${\rm H}^0(X, \omega_X)$ is stably isomorphic to $\Omega^2(\Z)^{\oplus h}$.
If in addition, $G$ has odd order and $g_Y>h\cdot r_G$, then we have
\[
 {\rm H}^0(X, \omega_X)\cong \Omega^2(\Z)^{\oplus  h}\oplus\Z[G]^{\oplus (g_Y-h\cdot r_G)}.
\]
 \end{thm}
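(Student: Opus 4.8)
The plan is to deduce Theorem~\ref{mainDiffthm} from Theorem~\ref{main1} in the manner of \cite[Theorem~5.5]{PaCubeInvent}. Throughout I would use the following reductions. Since $\pi$ is unramified it is in particular tamely ramified, so ${\rm R}\Gamma(X,\omega_X)$ is a perfect complex of $\Z[G]$-modules; since $X$ is normal of dimension $2$ it is Cohen--Macaulay, so $X\to\Spec(\Z)$ is Cohen--Macaulay of relative dimension $1$ and $\omega_X=\omega_{X/\Z}$ is a coherent, $\Z$-flat, base-change-compatible sheaf; and since $a=b=0$, Theorem~\ref{main1} gives $\overline\chi(X,\omega_X)=0$ in $\Cl(\Z[G])$.

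First I would identify the cohomology of ${\rm R}\Gamma(X,\omega_X)$ as a $\Z[G]$-complex. Base change along $\Spec(\Q)\to\Spec(\Z)$ and $\Spec(\fp)\to\Spec(\Z)$ identifies ${\rm R}\Gamma(X,\omega_X)\otimes^{\rm L}_{\Z}\Q$ with ${\rm R}\Gamma(X_\Q,\omega_{X_\Q})$ and ${\rm R}\Gamma(X,\omega_X)\otimes^{\rm L}_{\Z}\fp$ with ${\rm R}\Gamma(X_{\fp},\omega_{X_{\fp}})$; as the fibres are $1$-dimensional these have cohomology only in degrees $0,1$, hence so does ${\rm R}\Gamma(X,\omega_X)$. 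Write $L:={\rm H}^0(X,\omega_X)$. Flat base change gives ${\rm H}^1(X,\omega_X)\otimes_{\Z}\Q\cong{\rm H}^1(X_\Q,\omega_{X_\Q})$, and Serre duality on the smooth projective curve $X_\Q$ gives a $G$-equivariant isomorphism ${\rm H}^1(X_\Q,\omega_{X_\Q})\cong{\rm H}^0(X_\Q,\O_{X_\Q})^{\vee}$, on which $G$ acts trivially by hypothesis. Thus ${\rm H}^1(X,\omega_X)$ is a $\Z[G]$-lattice, torsion free by hypothesis, spanning the trivial $\Q[G]$-module of dimension $h$; hence it carries the trivial action and ${\rm H}^1(X,\omega_X)\cong\Z^{\oplus h}$ as a $\Z[G]$-module. (Equivalently one may argue via coherent duality ${\rm R}\Gamma(X,\omega_X)[1]\cong{\rm R}\Hom_{\Z}({\rm R}\Gamma(X,\O_X),\Z)$ together with the hypotheses.)

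The technical heart, which I expect to be the main obstacle, is to upgrade this to a two-term projective representative: ${\rm R}\Gamma(X,\omega_X)$ should be quasi-isomorphic to a complex $[P^0\xrightarrow{d}P^1]$ of finitely generated projective $\Z[G]$-modules in degrees $0,1$. Since the complex is perfect it suffices to check that it has Tor-amplitude $[0,1]$ over $\Z[G]$, i.e.\ that ${\rm R}\Gamma(X,\omega_X)\otimes^{\rm L}_{\Z[G]}S$ has cohomology in degrees $0,1$ for every simple $\Z[G]$-module $S$; any such $S$ is a module over $\fp[G]$ for a prime $p$. Here one uses that, $\pi$ being an \'etale $G$-torsor, $\pi_*\O_X$ is locally free of rank one over $\O_Y[G]$, hence flat over $\Z[G]$; combined with the $\Z$-flatness of $\omega_X$ and the projection formula this identifies ${\rm R}\Gamma(X,\omega_X)\otimes^{\rm L}_{\Z[G]}S$ with ${\rm R}\Gamma(Y_{\fp},\,\mathcal{S}\otimes\omega_{Y_{\fp}})$, where $\mathcal{S}$ is the locally free $\O_{Y_{\fp}}$-module obtained by twisting $S$ through the torsor $X_{\fp}\to Y_{\fp}$. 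As $Y_{\fp}$ is $1$-dimensional, this cohomology lies in degrees $0,1$, as required. Finally $\overline\chi(X,\omega_X)=[P^0]-[P^1]=0$ forces $[P^0]=[P^1]$ in $\Cl(\Z[G])$.

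With the two-term representative in hand the rest is bookkeeping. From $[P^0\xrightarrow{d}P^1]$ and the cohomology computation we get an exact sequence $0\to L\to P^0\xrightarrow{d}P^1\to\Z^{\oplus h}\to0$, which we compare with the $h$-fold direct sum $0\to\Omega^2(\Z)^{\oplus h}\to\Z[G]^{\oplus r_G h}\to\Z[G]^{\oplus h}\to\Z^{\oplus h}\to0$ of the presentation defining $\Omega^2(\Z)$. The generalized Schanuel lemma yields
\[
L\oplus\Z[G]^{\oplus r_G h}\oplus P^1\;\cong\;\Omega^2(\Z)^{\oplus h}\oplus P^0\oplus\Z[G]^{\oplus h};
\]
using $[P^0]=[P^1]$ to absorb $P^0$ and $P^1$ into free summands, and reading off the exponents from the Riemann--Roch identity $\dim_\Q{\rm H}^0(X_\Q,\omega_{X_\Q})-\dim_\Q{\rm H}^1(X_\Q,\omega_{X_\Q})=\#G\bigl(\dim_\Q{\rm H}^0(Y_\Q,\omega_{Y_\Q})-\dim_\Q{\rm H}^1(Y_\Q,\omega_{Y_\Q})\bigr)=\#G\,(g_Y-h)$ (valid since $\pi$ is finite \'etale of degree $\#G$ and $G$ acts trivially on ${\rm H}^0(X_\Q,\O_{X_\Q})$), one obtains $L\oplus\Z[G]^{\oplus(r_G h+c)}\cong\Omega^2(\Z)^{\oplus h}\oplus\Z[G]^{\oplus(g_Y+c)}$ for some $c\ge 0$. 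This proves that ${\rm H}^0(X,\omega_X)$ is stably isomorphic to $\Omega^2(\Z)^{\oplus h}$. For the last assertion, suppose $G$ has odd order and $g_Y>h\,r_G$; then the displayed isomorphism may be rewritten as $L\oplus\Z[G]^{\oplus N}\cong\bigl(\Omega^2(\Z)^{\oplus h}\oplus\Z[G]^{\oplus(g_Y-h r_G)}\bigr)\oplus\Z[G]^{\oplus N}$ for a suitable $N$. Since $G$ has odd order, $\Q[G]$ has no totally definite quaternion factor and hence satisfies the Eichler condition; moreover, by the same Riemann--Roch computation, $\Q\otimes_\Z L$ contains the regular representation $\Q[G]$ as a direct summand (this is where $g_Y>h r_G$ enters). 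Jacobinski's cancellation theorem then lets us cancel $\Z[G]^{\oplus N}$, giving ${\rm H}^0(X,\omega_X)\cong\Omega^2(\Z)^{\oplus h}\oplus\Z[G]^{\oplus(g_Y-h r_G)}$, as claimed.
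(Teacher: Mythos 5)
Your proposal is correct and follows essentially the same strategy as the paper, which establishes ${\rm H}^1(X,\omega_X)\cong\Z^{\oplus h}$ and then defers the rest to the argument of \cite[Theorem~5.5]{PaCubeInvent}; you have simply spelled out the details deferred to that reference (two-term perfect representative via a Tor-amplitude check, Schanuel's lemma against the defining presentation of $\Omega^2(\Z)$, absorption of $P^0,P^1$ using $[P^0]=[P^1]$ in $\Cl(\Z[G])$, and Jacobinski cancellation under the Eichler condition). The only differences are cosmetic: you obtain ${\rm H}^1(X,\omega_X)\cong\Z^{\oplus h}$ from flat base change plus Serre duality on $X_\Q$ and the torsion-freeness hypothesis, whereas the paper invokes coherent duality over $\Z$ to get ${\rm H}^1(X,\omega_X)\simeq\Hom_\Z({\rm H}^0(X,\O_X),\Z)$ directly; and your parenthetical ``this is where $g_Y>h\,r_G$ enters'' is slightly misplaced --- the containment of $\Q[G]$ in $\Q\otimes L$ follows already from $g_Y>h$, while the stronger inequality $g_Y>h\,r_G$ is what guarantees that the target $\Omega^2(\Z)^{\oplus h}\oplus\Z[G]^{\oplus(g_Y-h r_G)}$ has a nontrivial free summand (so that it makes literal sense and cancellation applies in the form you cite).
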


Now, let us give some more details.
Our technique is $\rm K$-theoretic 
and was inspired by work of M. Nori \cite{NoriRR} and especially of B. K\"ock \cite{KockRRtensor}.
The beginning point is that the Adams-Riemann-Roch theorem for a smooth variety $X$ can be obtained by
combining a fixed point theorem  
for the permutation action of the cyclic group $C_\l=\Z/\l\Z$ 
on the product $X^\l=X\times\cdots \times X$, 
with  the K\"unneth formula (see \cite{NoriRR, KockRRtensor}, here $\l$ is a prime number).
 Very roughly, this goes as follows: 
If $\E$ is a vector bundle over $X$, the K\"unneth formula gives an isomorphism ${\rm R}\Gamma(X, \E)^{\otimes \l}\cong {\rm R}\Gamma(X^\ell, \E^{\boxtimes \l})$. Using localization and the Lefschetz-Riemann-Roch
theorem we can relate the cohomology of the exterior tensor product sheaf
$\E^{\boxtimes \l}$ on $X^\l$ to the cohomology of the restriction  $\E^{\otimes \l}=\E^{\boxtimes \l}{|_{X}}$ 
on
 the $C_\l$-fixed point locus which here is the diagonal $X=\Delta(X)\subset X^\l$. Of course, there is a correction that 
 involves the conormal bundle ${\mathscr {N}}_{X|X^\l}$ of $X$ in $X^\l$. 
 This correction amounts to multiplying $\E^{\otimes \l}$ by $\lambda_{-1}({\mathscr {N}}_{X|X^\l})^{-1}$.
 The multiplier $\lambda_{-1}({\mathscr {N}}_{X|X^\l})^{-1}$ is familiar in Lefschetz-Riemann-Roch type theorems 
 and in this case is given by the inverse  $\theta^\l(\Omega^1_X)^{-1}$ of the $\l$-th Bott class
 of the differentials; 
 the resulting identity eventually gives the Adams-Riemann-Roch theorem for the Adams operation $\psi^\l$. 
 
 In our situation, we have to take great care to 
 explain how enough of this can be done $G$-equivariantly and also for projective $G$-modules without losing 
much information. It also important to connect the $\l$-th tensor powers used above to 
the versions of the $\l$-th Adams operators on the projective class group $\Cl(\Z[G])$ as defined by Cassou-Nogu\`es--Taylor. 
This is all quite subtle and eventually needs to be applied for a
carefully selected  set
of primes $\l$.
Most of our arguments are valid in any dimension $d$ and  we show various 
general Adams-Riemann-Roch type
results. Combining these with a \v Chebotarev density argument, we obtain that  for a $p$-group $G$ with $p$
an odd prime, the obstruction $\overline\chi(X, \F)$ lies in a specific 
sum of  eigenspaces for the action of the Adams--Cassou-Nogu\`es--Taylor
operators on the $p$-power part of the classgroup $\Cl(\Z[G])$
(Theorem \ref{eigencor}). 
When $d=1$ there is only one eigenspace in this sum. We can then see,
using classical results of the theory of cyclotomic fields,
that this eigenspace is trivial
when $p\geq 5$. When $p=3$, results of R. Oliver \cite{OliverD} 
imply that the eigenspace is annihilated by the Artin exponent of $G$. 
The crucial 
number theoretic ingredients are classical results of Iwasawa 
 on cyclotomic class groups and units and the following fact:  
For any $p>2$, both the second and the $p-2$-th Teichm\"uller eigenspaces of the $p$-part of the 
class group of $\Q(\zeta_p)$ are trivial. An argument that  uses again localization  implies
that we can reduce the calculation to $p$-groups. Therefore, by the above, for general $G$, the class  $\overline\chi(X,\F)$ 
in $\Cl(\Z[G])$ is determined by the classes $c_2=\overline\chi(X,\F)$ in $\Cl(\Z[G_2])$ for the cover $X\to X/G_2$
and $c_3=\overline\chi(X,\F)$ in $\Cl(\Z[G_3])$ for $X\to X/G_3$. Here $G_2$, resp. $G_3$, is a $2$-Sylow, resp. $3$-Sylow subgroup of $G$. This then eventually leads to Theorem \ref{main1}.  

It is also important to treat
 (tamely)  ramified covers $X\to Y$ over $\Z$.
 However,   usually the unramified over $\Z$ case is  the hardest one to deal 
with  initially.  After this is done, ramified covers
can be examined by localizing at the 
branch locus  (see for example \cite{CPTAnnals}). This also
occurs here.
In fact, our method applies 
when $\pi: X\to Y$ is tamely ramified. In our situation, the assumption that the ramification is tame 
implies that  the corresponding $G$-cover  $X_\Q\to Y_\Q$ 
given by the generic fibers is unramified.  
 Under certain regularity assumptions, we obtain an Adams-Riemann-Roch 
formula for $\ov\chi(X,\F)$  (Theorem \ref{ARRtame}) which generalizes formulas of Burns-Chinburg \cite{ChinburgBurns} and K\"ock \cite{KockMathAnn} to all dimensions. In this case,
one cannot expect that $\overline\chi(X, \F)$ is annihilated by 
small integers. Nevertheless, when $d=1$, we can obtain 
(almost full) information about the class $ \overline\chi(X,\F)$
from the ramification locus of the cover $X\to Y$. In particular,
under some further  conditions on $X$ and $Y$,
we show that the class ${\rm gcd}(2,\#G)^{v_2(\#G)+2}
{\rm gcd}(3,\#G)^{v_3(\#G)-1}\cdot \overline\chi(X,\F)$
only depends on the restrictions of $\F$ on the local  curves
$X_{\Z_p}$, where $p$
runs over the primes that contain the support of the
ramification locus of $X\to Y$. This proves
  a slightly weakened variant of the ``Input Localization
Conjecture" of \cite{CPTAnnals} for $d=1$
(see Theorem \ref{thminput}). Dealing with wildly ramified covers 
  presents a host of  additional difficulties which are  still not resolved, not even in the
case $d=0$ of number fields.

 \smallskip

 \emph{Acknowledgements:} The author would like to thank T.~Chinburg and M.~Taylor
for  many discussions over the years   and the anonymous referees
for their careful reading and for numerous suggestions that improved the paper. 

\bigskip

\bigskip

\section{Grothendieck groups and Euler characteristics}
 
\subsection{$G$-modules, sheaves and Grothendieck groups}  

\begin{para}
In this paper, modules are left modules while groups act on schemes on the right. Sometimes we
will use the term ``$G$-module"
instead of ``$\Z[G]$-module".  If we fix  a prime number $\l$, we set $\Zl=\Z[\l^{-1}]$,
and in general we denote inverting $\l$ or localizing at the 
multiplicative set of powers of $\l$ by a prime ${}'$. We reserve the notation $\Z_\l$ for the $\l$-adic integers.
Set $C_\l=\Z/\l\Z$ and denote by $S_\l$  the symmetric group
in $\l$ letters. We regard $C_\l$ as the subgroup of $S_\l$ generated by the cycle $(12\cdots \l)$.
\end{para}

\begin{para}\label{idealspara}
If $R=\Z$ or $\Z'$, then the Grothendieck group $\Gr_0(R[G])$
of finitely generated $R[G]$-modules can be identified with the Grothendieck ring
 of finitely generated $R[G]$-modules which are free as $R$-modules (``$R[G]$-lattices")
with multiplication operation given by tensor product. The ring $\Gr_0(R[G])$ is a finite and 
flat $\Z$-algebra. The natural homomorphism $\Gr_0(R[G])\to \scrR_\Q(G):=\Gr_0(\Q[G])$, $[M]\mapsto [M\otimes_R\Q]$,
is surjective with nilpotent kernel (\cite{SwanGro}). Therefore, the prime ideals of $\Gr_0(R[G])$
can be identified with the prime ideals of the representation ring $\scrR_\Q(G)$; these can be described following Segal and Serre (see for example  \cite[\S 4]{CEPTRR}): Suppose that ${\mathfrak p}$ is a prime ideal of $R$. An element $g$ in $G$ is called
${\mathfrak p}$-regular if it is of order prime to the characteristic of $\mathfrak p$; the set of prime ideals of $\scrR_\Q(G)$
that lie above ${\mathfrak p}$ are in $1$-$1$ correspondence with the set of ``$\Gal(\ov\Q/\Q)$-conjugacy classes"
of $\mathfrak p$-regular elements. Here $g$, $g'$ are $\Gal(\ov\Q/\Q)$-conjugate, when there is $t\in \Z$ prime to the exponent ${\rm exp}(G)$ of $G$, such that
$g'$ is conjugate to $g^t$. The prime ideal $\rho=\rho_{(g, \pfr)}$, where $g$ is $\mathfrak p$-regular,
is 
$$
\rho_{(g, \pfr)}=\{ \phi\in \scrR_\Q(G)\ |\ {\rm Tr}(g\, |\, \phi)\in \pfr\}.
$$
\end{para}

\begin{para} We say that a $G$-module $M$ is $G$-{\sl cohomologically trivial} ($G$-c.t.),
or simply {\sl cohomologically trivial} (c.t.) when $G$ is clear from the context,
if for all subgroups $H\subset G$, the cohomology groups ${\rm H}^i(H, M)$
are trivial when $i>0$. If the $\Z[G]$-module $M$ has finite projective
dimension, it is $G$-c.t.
In fact, by \cite[Theorem 9]{AtiyahWall},  the converse is true and $M$ is $G$-c.t. if and only if it has a
resolution
$$
0\to P\to Q\to M\to 0
$$
where $P$, $Q$ are  projective $\Z[G]$-modules.
If $M$ is in addition finitely generated, we can take $P$ and 
$Q$ to also be finitely generated. 
Hence, we can identify
$$
\Gr_0^{\rm ct}(G,\Z)=\Kr_0(\Z[G])
$$
where $\Gr_0^{\rm ct}(G,\Z)$ is the Grothendieck group
of finitely generated $G$-c.t. modules. The following facts can be found in \cite{SwanAnnals},
\cite{MJTclassgroups}:
$\Kr_0(\Z[G])$ is a finitely generated $\Gr_0(\Z[G])$-module by action
given by the tensor product by $\Z[G]$-lattices. The subgroup $\langle \Z[G]\rangle$ of free classes is
a $\Gr_0(\Z[G])$-submodule and so the quotient $\Cl(\Z[G])$ is also a $\Gr_0(\Z[G])$-module.
The  $\Gr_0(\Z[G])$-module structure on $\Cl(\Z[G])$ factors through the quotient 
$\Gr_0(\Z[G])\to \scrR_\Q(G)$.
\end{para}

 \begin{para} For $S$ a scheme (with trivial $G$-action), we will consider 
quasi-coherent sheaves of $\O_S[G]$-modules 
on $S$ and simply call these 
quasi-coherent $\O_S[G]$-modules. We say that a quasi-coherent $\O_S[G]$-module $\F$
is $G$-cohomologically trivial ($G$-c.t.) if, for all $s\in S$, the stalk $\F_s$ is a $G$-c.t. module.
As in \cite[p. 448]{ChinburgTameAnnals}, we can easily see that a quasi-coherent $\O_S[G]$-module $\F$ is $G$-c.t. if and only if 
 for every open affine subscheme $U$ of $S$,
 $\F(U)$ is a $G$-c.t. module.
\end{para}

\subsection{Grothendieck groups and Euler characteristics}

 \begin{para}\label{construction} We refer to \cite{ThomasonEquiv} for general facts 
 about the equivariant $\Kr$-theory of schemes with group action
and for the notations $\Gr_i(G, -)$, resp.  $\Kr_i(G, -)$, of $\Gr$-groups, resp.  $\Kr$-groups,
for coherent, resp. coherent locally free, $G$-equivariant sheaves.
 If $S$ is as above, we denote by $\Gr^{\rm ct}_0(G, S)$ the Grothedieck group of $G$-c.t. coherent
 $\O_S[G]$-modules.
\quash{Similarly, we denote by $\Gr^{\rm pr}_0(G, S)$ the Grothendieck group of coherent
$\O_S[G]$-modules on $S$ which are locally projective.
When $S=\Spec(R)$ and $R$ is Noetherian, by Proposition \ref{resolution}, we can identify $\Gr^{\rm pr}_0(G, S)$ with ${\rm K}_0(R[G])$ by sending the class $[\E]$ of a sheaf $\E$
to the class  of the $R[G]$-module
of its global sections. }

If the structure morphism $g: S\to \Spec(\Z)$ is projective, there is (\cite{ChinburgTameAnnals}, see also below)  a group homomorphism
(the  ``projective equivariant Euler characteristic")
$$
g^{\ct}_*: \Gr^{\rm ct}_0(G, S)\to \Gr^{\rm ct}_0(G, \Spec(\Z))=\Gr^{\rm ct}_0(G,\Z)={\rm K}_0(\Z[G])
$$
where in the target we   identify the class of a sheaf with the class of the 
module of its global sections.
 We will sometimes abuse notation and still write
$g^{\ct}_*$ for the composition of $g^\ct_*$ with 
$\Kr_0(\Z[G])\to \Cl(\Z[G])=\Kr_0(\Z[G])/\langle \Z[G]\rangle$.

Here is a review of the construction of $g^{\ct}_*$.
Suppose that   $\E$ is a coherent   $\O_S[G]$-module with
$G$-c.t. stalks. Following \cite{ChinburgTameAnnals}, we first give a bounded complex $(M^\bullet, d^\bullet) $ of finitely generated projective $\Z[G]$-modules
which is isomorphic to ${\rm R}\Gamma(S, \E)$ in the derived category 
of complexes of $\Z[G]$-modules bounded below: Choose a finite open cover ${\mathcal U}=\{U_j\}_j$ of $S$ by affine 
subschemes $U_j=\Spec(R_j)$. Then all intersections $ U_{j_1\ldots j_m}:=U_{j_1}\cap\cdots \cap U_{j_m}$
are also affine. The sections
$
\E( U_{j_1\ldots j_m})
$
are cohomologically trivial $G$-modules and the usual \v Cech complex $C^\bullet({\mathcal U}, \E)$ is a bounded
complex of cohomologically trivial $G$-modules 
(see the proof of Theorem 1.1 in \cite{ChinburgTameAnnals}).
The   complex $C^\bullet({\mathcal U}, \E)$ is isomorphic to ${\rm R}\Gamma(S, \E)$ and 
since $S\to \Spec(\Z)$ is projective, it has finitely generated  homology groups.
Now the usual procedure, described for example 
in \emph{loc. cit.}, produces a perfect complex $M^\bullet$
of $\Z[G]$-modules (\emph{i.e.} a bounded complex of finitely generated projective $\Z[G]$-modules)
together with a morphism of complexes $M^\bullet\to C^\bullet({\mathcal U}, \E)$ which is 
a quasi-isomorphism. Then $M^\bullet$ is also isomorphic to ${\rm R}\Gamma(S, \E)$
in the derived category and we
define
\begin{equation}
g^{\ct}_*(\E)=\sum\nolimits_i(-1)^i [M^i]
\end{equation}
in $\Gr_0^\ct(G, \Z)=\Kr_0(\Z[G])$; this is independent of our choices.
 \end{para}

\begin{para}
When  in addition the symmetric group $S_\l$ acts
on $S$, we can also consider
the Grothendieck groups $\Gr_0^{\rm ct}(S_\l; G, S)$\quash{, $\Gr_0^{\rm pr}(S_\l; G, S)$}
of $S_l$-equivariant coherent $\O_S[G]$-modules
which are $G$-cohomologically trivial.\quash{, resp. are
locally projective as $\O_S[G]$-modules.}  
If $g: S\to \Spec(\Z)$ is projective and in addition $S_\l$ acts on $S$,
a construction as above gives an Euler characteristic homomorphism
$$
g^{\ct}_*: \Gr^{\rm ct}_0(S_\l;G, S)\to \Gr^{\rm ct}_0(S_\l;G, \Spec(\Z)).
$$
Similarly for $S_\l$ replaced by the cyclic group $C_\l$. For simplicity, we will sometimes omit the superscript ${\ct}$ from
$g^{\ct}_*$ when it is clear from the context.
\end{para}
 
 \begin{para}\label{tamesetup} We now assume that the group $G$ acts on $S$ on the right. We say that $G$ acts tamely on $S$, if for every point $s\in S$,
 the inertia subgroup $I_s\subset G$, which is, by definition, the largest subgroup of $G$
 that fixes $s$ and acts trivially on the residue field $k(s)$, has order prime
 to ${\rm char}(k(s))$. Suppose that the quotient scheme $S/G$ exists and the map
 $\pi: S\to T=S/G$ is finite. Then, by \cite[Lemme 1, Ch. XI]{RaynaudLNM169},  the $G$-cover $\pi$ is, \'etale locally around $\pi(s)$
 on $T$,
 induced from an $I_s$-cover. Hence, if $\F$ is a $G$-equivariant $\O_S$-module, then $\pi_*\F$ is a $G$-c.t.
coherent  $\O_T[G]$-module (see \cite{ChinburgTameAnnals, ChinburgErez}). We then obtain $\pi^{\ct}_*: \Gr_0(G, S)\to \Gr^{\ct}_0(G, T)$ given by $[\F]\mapsto [\pi_*\F]$.
 If $f: S\to  \Spec(\Z)$ is projective, then $S/G$ exists, $\pi$ is finite and $g: T\to \Spec(\Z)$
 is projective (\cite[III, Th. 1]{MumfordAV}). Then the composition $f^{\ct}_*=g^{\ct}_*\cdot \pi^{\ct}_*$ gives the projective equivariant Euler characteristic
 \begin{equation}
 f^{\ct}_*: \Gr_0(G, S)\to \Gr^{\ct}_0(G, \Spec(\Z))=\Kr_0(\Z[G]).
 \end{equation}
 The Grothendieck groups $\Gr_0(G, S)$ and $\Kr_0(\Z[G])$ are $\Gr_0(\Z[G])$-modules
 and the map $f^{\rm ct}_*$ is a $\Gr_0(\Z[G])$-module homomorphism. 
Similarly, for 
 $\Z$ replaced by $\Zl=\Z[\l^{-1}]$.
Sometimes, we will denote $f^{\ct}_*(\F)$ by $\chi(X,\F)$;
then $\overline\chi(X, \F)$ is the image of $\chi(X,\F)$ in $\Cl(\Z[G])=\Kr_0(\Z[G])/\langle \Z[G]\rangle$.  
 
Similarly, if $S_\l\times G$   acts on the projective $f: S\to \Spec(\Z)$ with the subgroup $G=1\times G$ acting tamely, 
we have
\begin{equation}
f^{\ct}_*: \Gr_0(S_\l\times G, S)\to \Gr^{\ct}_0(S_\l; G, \Spec(\Z))
\end{equation}
given as $f_*^{\ct}= g^{\ct}_*\cdot \pi^{\ct}_*$.  
Here $f^{\ct}_*$ is a $\Gr_0(\Z[S_l\times G])$-module homomorphism.
These constructions also work with $\Z$ replaced by $\Zl$
and with $S_\l$ replaced by $C_\l$.
\end{para}

\quash{

\subsection{Tame covers and projective $G$-sheaves}

 \begin{para}
  Let $\pi: X\to Y$ be a finite flat morphism where $Y\to \Spec(\Z)$ 
 is projective and flat over $\Spec(\Z)$. We assume that $\pi$ is a $G$-cover 
which is tamely ramified. More precisely, this means that $G$ acts on $X$ tamely
and that $\pi$ identifies $Y$ with the quotient $X/G$ (in particular, $\O_Y=(\pi_*\O_X)^G$).
Then the morphism between the generic fibers
 $\pi_\Q: X_\Q\to Y_\Q$ is an \'etale $G$-cover (\emph{cf.} \cite[Remark 1.2.4 (d)]{CEPTAnnals}).

\begin{prop}\label{propLP} Under these assumptions,  $\E=\pi_*\O_X$ is a  locally projective coherent $\O_Y[G]$-module.
\end{prop}

\begin{proof} Let us first deal with the case that $\pi$ is unramified, \emph{i.e.} $I_x=\{1\}$,
for all $x\in X$. Then $\pi$ is a $G$-torsor.
 Now recall that if $\phi: R\to R'$ is a finite \'etale homomorphism
of Noetherian rings then $R'$ is a projective $R$-module and in fact $\phi$ makes $R$ a direct summand of $R'$ 
as an $R$-module (see for example \cite{CEPTDuke}, Prop. 2.18).  Using this, we can see
that if $M$ is an $R[G]$-module
such that $M\otimes_R R'$ is $R'[G]$-projective, then $M$ is
$R[G]$-projective.  Since $\pi:  X\to  Y$ is a $G$-torsor, the map $(x, g)\to (x, x\cdot g)$ gives
an isomorphism $  X\times G=\sqcup_{g\in G}  X\xrightarrow{\sim} X\times_ Y X$ and so
 $\pi^*\E=\E\otimes_{\O_Y}\O_X\simeq  {\rm Maps}(G, \O_X)$.
This implies that   the finite \'etale base change   $\pi^*\E$ is 
a coherent  locally projective $\O_X[G]$-module (when $X$ is considered with trivial $G$-action). The proof for the unramified case follows.  
In general, to show that $(\pi_*\E)_y$ is a projective $\O_{Y,y}[G]$-module,
when $p$ is the characteristic of $k(y)$, it is enough to show that the module obtained by 
restriction of operators to 
a $p$-Sylow $G_p\subset G$ is a projective $\O_{Y,y}[G_p]$-module
(see \cite[Lemma 3.1]{CPTAdelic}):
Consider the $G_p$-cover $\pi_p: X\to X/G_p$ and let $q: X/G_p\to Y$
be the natural morphism; by the condition of tameness, 
there is a Zariski open $y\in V\subset Y$ with inverse image $W=q^{-1}(V)$  
such that
$(\pi_p)_{|W'}: W'=\pi_p^{-1}(W)\to W$ is unramified and hence finite \'etale.
 By the result in the 
unramified case as above, the restriction of $(\pi_p)_*(\O_X)$ to $W$ is $\O_{X/G_p}[G_p]$-locally projective. Notice that the restriction of $\pi$ over $V$ is a composition of $(\pi_p)_{|W'}: W'\to W$ 
with $q_{|W}: W\to V$. Since $W'\to W$ is finite \'etale and $\pi$ is flat, $q_{|W}: W\to V$
is also finite and flat. Hence, $q_*(\O_{W})$ is a finite locally free $\O_V$-module.
We can now see that these facts imply the result.
\end{proof} 
\end{para}

}

\section{The K\"unneth formula}

\subsection{Tensor powers} 

\begin{para}\label{tensorpar} Let $R$ be a commutative Noetherian ring with $1$.
If  $(M^\bullet, d^\bullet)$ is a bounded chain complex of $R[G]$-modules 
which are flat as $R$-modules, 
we can consider
the total tensor product complex
$$
({M^\bullet}^{\otimes \l},  \partial^\bullet )
$$
whose term of degree $n$ is
$$
({M^\bullet}^{\otimes \l})^n=\bigoplus_{  \stackrel{(i_1,\ldots, i_l)\in \Z^\l}{   i_1+\cdots +i_l=n}  } (M^{i_1} \otimes_R  \cdots \otimes_R M^{i_\l})
$$
with diagonal $G$-action and the differential $\partial^n $ is given by
$$
\partial^n(m_{i_1}\otimes\cdots \otimes m_{i_\l})=\sum_{a=1}^\l (-1)^{i_1+\cdots +i_{a-1}} m_{i_1}\otimes \cdots\otimes m_{i_{a-1}}\otimes d^{i_a}(m_{i_a})\otimes\cdots \otimes m_{i_\l}.
$$
Since the modules $M^i$ are $R$-flat, the complex $({M^\bullet})^{\otimes \l}$ is isomorphic to the $\l$-fold derived tensor  
$$
M^\bullet \buildrel{\rm L}\over{\otimes}_R M^\bullet\buildrel{\rm L}\over{\otimes}_R \cdots \buildrel{\rm L}\over{\otimes}_R M^\bullet
$$
 is the
derived category of complexes of $R[G]$-modules which are bounded above.

 \begin{lemma}\label{tensor}
a)   Let $M_1$, $M_2$ be projective $R[G]$-modules.
Then $M_1\otimes_R M_2$ with the diagonal $G$-action is also a projective $R[G]$-module.

b) Let  $M_1$, $M_2$ be cohomologically trivial $G$-modules which are $\Z$-flat.
Then $M_1\otimes_{\Z} M_2$ with the diagonal $G$-action is also a 
cohomologically trivial $G$-module.
\end{lemma}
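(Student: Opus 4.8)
For part (a), the plan is to reduce to the free case and then exploit the standard ``tensor-Hom'' / diagonal-action trick. If $M_1$ and $M_2$ are projective $R[G]$-modules, each is a direct summand of a free module, and since $\otimes_R$ commutes with direct sums it suffices to treat $M_1=M_2=R[G]$. So the whole of (a) comes down to the claim that $R[G]\otimes_R R[G]$, with the \emph{diagonal} $G$-action $g\cdot(x\otimes y)=gx\otimes gy$, is free as an $R[G]$-module. For this I would write down the explicit $R$-module isomorphism $R[G]\otimes_R R[G]\xrightarrow{\ \sim\ } R[G]\otimes_R R[G]_{\mathrm{triv}}$ sending $g\otimes h\mapsto g\otimes g^{-1}h$, where on the target $G$ acts only on the first factor; one checks directly that this intertwines the diagonal action on the source with the left-translation-on-the-first-factor action on the target, and the latter is visibly a free $R[G]$-module (of rank $\#G$). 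This is the classical fact that the diagonal-action module $M\otimes_R R[G]$ is free whenever $M$ is a finitely generated free $R$-module; the only thing to verify is $R$-linearity and $G$-equivariance of the displayed map, both routine.

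For part (b), the plan is to combine part (a) with the Atiyah--Wall characterization of cohomological triviality already recalled in the excerpt: a finitely generated $\Z$-flat $G$-module $M$ is $G$-c.t.\ if and only if it fits in a short exact sequence $0\to P\to Q\to M\to 0$ with $P,Q$ finitely generated projective $\Z[G]$-modules. (If $M_i$ is not finitely generated one first reduces to the finitely generated case by writing it as a filtered colimit of finitely generated $\Z$-flat $G$-submodules, since cohomology commutes with filtered colimits; alternatively one may simply assume finite generation, which is the only case used later.) Choosing such resolutions $0\to P_i\to Q_i\to M_i\to 0$ for $i=1,2$ and tensoring over $\Z$ — which is exact because everything in sight is $\Z$-flat — produces a finite filtration of $M_1\otimes_\Z M_2$ (equivalently, a two-step exact complex) whose graded pieces are the four modules $P_1\otimes P_2$, $P_1\otimes Q_2$, $Q_1\otimes P_2$, $Q_1\otimes Q_2$, each of which is projective over $\Z[G]$ by part (a). Since the class of $G$-c.t.\ modules is closed under extensions and every finitely generated projective is $G$-c.t., it follows that $M_1\otimes_\Z M_2$ is $G$-c.t.

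The main obstacle is the bookkeeping in part (b): tensoring two short exact sequences gives a bicomplex rather than a single short exact sequence, so one must argue that the total object $M_1\otimes_\Z M_2$ has a finite resolution (or filtration) by projectives built from the $P_i\otimes Q_j$ and $Q_i\otimes Q_j$. The clean way is to note $M_1\otimes_\Z M_2\simeq M_1\Lder_\Z M_2$ (by $\Z$-flatness) and compute this derived tensor product using the length-one projective resolutions $[P_i\to Q_i]$ of the $M_i$; the resulting total complex is a length-two complex of projective $\Z[G]$-modules quasi-isomorphic to $M_1\otimes_\Z M_2$, which is exactly the Atiyah--Wall criterion (extended from length one to a bounded complex, which still forces $G$-c.t.\ since $\mathrm{H}^i(H,-)$ is computed by a finite projective complex). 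Part (a) itself is essentially mechanical once the right isomorphism is written down; the only subtlety worth flagging is that the diagonal action, not the ``act on one factor'' action, is what must be shown to be projective, and these differ precisely by the twist $g\otimes h\mapsto g\otimes g^{-1}h$.
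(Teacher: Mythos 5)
Your proof is correct and follows essentially the same route as the paper: part (a) reduces to the freeness of $R[G]\otimes_R R[G]$ with the diagonal action (the paper states this as the key fact and you supply the standard untwisting isomorphism), and part (b) tensors the two Atiyah--Wall length-one projective resolutions to obtain a bounded projective resolution of $M_1\otimes_\Z M_2$, giving finite projective dimension and hence cohomological triviality. Your worry about finite generation is unnecessary since the Atiyah--Wall characterization the paper invokes applies to all $G$-modules, not just finitely generated ones.
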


\begin{proof}
a) Follows easily from the fact that the tensor product $R[G]\otimes_R R[G]$ with diagonal
$G$-action is $R[G]$-free.  

b) By \cite[Thm. 9]{AtiyahWall} we have resolutions
$0\to Q_i\to P_i\to M_i\to 0$,
with $P_i$, $Q_i$ projective $\Z[G]$-modules.
Using this and (a) we can see that $M_1\otimes_\Z M_2$
has finite projective dimension, hence it is cohomologically trivial.
\end{proof}
\end{para}

\begin{para}\label{permutensorpara}
We  define an action of the symmetric group 
$S_\l$ on the complex $({M^\bullet}^{\otimes \l},  \partial^\bullet )$   as follows
(see \cite[p. 176]{AtiyahAdams}):
$\sigma\in S_\l$ acts on $({M^\bullet}^{\otimes \l})^n=\bigoplus (M^{i_1}\otimes_R   \cdots \otimes_R M^{i_\l})$
by permuting the factors and with the {\sl appropriate sign changes}
so that a transposition of two terms $m_i\otimes m_j$ (where $m_i\in M^i$
and $m_j\in M^j$) comes with the sign $(-1)^{ij}$. (We   see that the action of $\sigma$
commutes with the differentials $\partial^n$.) 
 
Let $M^0$, $M^1$ be finitely generated projective $R[G]$-modules
and let us consider the complex $M^\bullet:=[M^0\xrightarrow{0} M^1]$ (in degrees $0$ and $1$).
Using Lemma \ref{tensor} (a), we  form
the Euler characteristic
$$
\chi((M^\bullet)^{\otimes \l})=\sum\nolimits_{n} (-1)^n\cdot [({M^\bullet}^{\otimes \l})^n]\in \Kr_0(S_\l;G, R),
$$
where $\Kr_0(S_\l;G, R)$ is the Grothendieck group of finitely generated $R[S_l\times G]$-modules
which are $R[G]$-projective.
As in \cite[Prop. 2.2]{AtiyahAdams},  we can see  
that
$\chi((M^\bullet)^{\otimes \l})$ only depends on the class $\chi( M^\bullet )=[M^0]-[M^1]$ in $\Kr_0(R[G])$ and gives a well-defined map, the
``tensor power operation"
$$
\tau^\l: \Kr_0(R[G])\to \Kr_0(S_\l;G, R)\ ;\quad \tau^{\l}([M^0]-[M^1])=\chi((M^\bullet)^{\otimes \l}).
$$
(This statement also follows from \cite{Grayson}, see for example \cite[1]{KockRRtensor}.)
In general, if $M^\bullet$ is a perfect complex of $R[G]$-modules, then as in \cite{AtiyahAdams},
$$
\tau^\l\left(\sum\nolimits_i (-1)^i [M^i]\right)=\sum\nolimits_n (-1)^n [({M^\bullet}^{\otimes \l})^n].
$$
\end{para}

\subsection{The  formula} Suppose that $g: Y\to \Spec(\Z)$ is projective and flat and 
that $\E$ is a coherent $\O_Y[G]$-module which is $G$-c.t. and $\O_Y$-locally free.
Consider the exterior tensor product $\E^{\boxtimes \l}=\otimes_{i=1}^\l p^*_i\E$ of $\E$
on $Y^\l$ with $p_i: Y^\l\to Y$ the $i$-th projection. Then $\E^{\boxtimes \l}$ is a
 $S_\l\times G$-equivariant coherent $\O_{Y^\l}[G]$-module which is $\O_{Y^\l}$-locally free and by Lemma \ref{tensor} (b)
$G$-cohomologically trivial. 
Denote by $g^\l: Y^\l\to \Spec(\Z)$ the structure morphism.

\begin{thm}\label{Kunneththm}(K\"unneth formula)
We have
 \begin{equation}\label{kun}
\tau^\l(g^{\ct}_*(\E))=(g^\l)^{\ct}_*(\E^{\boxtimes \l})
\end{equation}
in $\Kr_0(S_\l;G, \Z)=\Gr^{\rm ct}_0(S_\l; G, \Z)$. 
\end{thm}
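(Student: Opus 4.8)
The plan is to compute both sides by means of \v Cech complexes and so reduce \eqref{kun} to an $S_\l$-equivariant Eilenberg--Zilber comparison, in the spirit of \cite{NoriRR, KockRRtensor}. First I would fix a finite affine open cover $\mathcal{U}=\{U_j\}_j$ of $Y$. Since $Y\to\Spec(\Z)$ is projective, $Y$ is separated, so all intersections $U_{j_0\ldots j_p}$ are affine and the \v Cech complex $C^\bullet:=C^\bullet(\mathcal{U},\E)$ computes ${\rm R}\Gamma(Y,\E)$; its terms are $G$-c.t. (sections of the $G$-c.t. sheaf $\E$) and $\Z$-flat (finitely generated locally free over the $\Z$-flat rings $\O_Y(U_{j_0\ldots j_p})$), and its cohomology is finitely generated. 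By the construction of \ref{construction} together with the standard fact that an acyclic bounded complex of $G$-c.t. modules has class $0$ in $\Gr^{\ct}_0(G,\Z)=\Kr_0(\Z[G])$, one has $g^{\ct}_*(\E)=\sum_i(-1)^i[C^i]$. Picking a perfect complex $M^\bullet$ of $\Z[G]$-modules with a quasi-isomorphism $M^\bullet\to C^\bullet$ and invoking the last formula of \ref{permutensorpara}, $\tau^\l(g^{\ct}_*(\E))=\sum_n(-1)^n[({M^\bullet}^{\otimes\l})^n]$. Here ${M^\bullet}^{\otimes\l}$ is perfect over $\Z[S_\l\times G]$ by Lemma \ref{tensor}(a), ${C^\bullet}^{\otimes\l}$ is a bounded complex of $G$-c.t. $\Z[S_\l\times G]$-modules by Lemma \ref{tensor}(b), and $M^\bullet\to C^\bullet$ induces an $S_\l\times G$-equivariant quasi-isomorphism ${M^\bullet}^{\otimes\l}\to {C^\bullet}^{\otimes\l}$ (the terms being $\Z$-flat this is a genuine quasi-isomorphism, and the $S_\l$-action of \ref{permutensorpara} is functorial), so that
\[
\tau^\l(g^{\ct}_*(\E))=\sum\nolimits_n(-1)^n[({C^\bullet}^{\otimes\l})^n]\ \in\ \Gr^{\ct}_0(S_\l; G,\Z)=\Kr_0(S_\l; G,\Z).
\]

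For the right-hand side I would use the product cover. The sets $U_{\underline{j}}:=U_{j_1}\times\cdots\times U_{j_\l}$ form an affine open cover $\mathcal{U}^\l$ of $Y^\l$ that is stable under the permutation action of $S_\l$. Since $Y^\l$ is separated, $C^\bullet(\mathcal{U}^\l,\E^{\boxtimes\l})$ computes ${\rm R}\Gamma(Y^\l,\E^{\boxtimes\l})$; a multiple intersection $U_{\underline{j}_0}\cap\cdots\cap U_{\underline{j}_p}$ is a product of $\l$ affine opens of $Y$ (one intersection in each factor), so the sections of $\E^{\boxtimes\l}$ over it are canonically $\bigotimes_{i=1}^\l \E(V_i)$ for suitable affine opens $V_i\subset Y$, a tensor product over $\Z$ of $\Z$-flat $G$-c.t. modules, hence $G$-c.t. by iterating Lemma \ref{tensor}(b); and the cohomology is finitely generated because $Y^\l\to\Spec(\Z)$ is projective. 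Applying the construction of \ref{construction} in its $S_\l$-equivariant form to $\E^{\boxtimes\l}$ with the $S_\l$-stable cover $\mathcal{U}^\l$ then gives
\[
(g^\l)^{\ct}_*(\E^{\boxtimes\l})=\sum\nolimits_n(-1)^n[C^n(\mathcal{U}^\l,\E^{\boxtimes\l})]\ \in\ \Gr^{\ct}_0(S_\l; G,\Z).
\]

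It remains to identify the classes of the two bounded complexes of $G$-c.t. $\Z[S_\l\times G]$-modules ${C^\bullet}^{\otimes\l}$ and $C^\bullet(\mathcal{U}^\l,\E^{\boxtimes\l})$ in $\Gr^{\ct}_0(S_\l; G,\Z)$; since Euler characteristics there are invariant under quasi-isomorphism (apply the d\'evissage above to a mapping cone), it suffices to produce an $S_\l\times G$-equivariant quasi-isomorphism between them. Using that $\O_{Y^\l}$ of a product of affine opens of $Y$ is the tensor product over $\Z$ of the corresponding $\O_Y$'s, together with the analogous identification for $\E^{\boxtimes\l}$, one sees that the cosimplicial abelian group underlying $C^\bullet(\mathcal{U}^\l,\E^{\boxtimes\l})$ is the diagonal of the $\l$-fold multi-cosimplicial object whose associated total complex is exactly ${C^\bullet}^{\otimes\l}$; the iterated shuffle (Eilenberg--Zilber) map then furnishes a natural quasi-isomorphism between the two. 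The point where I expect the real work to lie is the verification that, with the Koszul sign conventions of \ref{permutensorpara}, this map is $S_\l$-equivariant for the permutation action on ${C^\bullet}^{\otimes\l}$ and the geometric permutation of the factors on $C^\bullet(\mathcal{U}^\l,\E^{\boxtimes\l})$ --- this is the equivariant Eilenberg--Zilber theorem, the same mechanism that underlies power operations; it is automatically $G$-equivariant, $G$ acting diagonally throughout and the shuffle map being assembled from cosimplicial structure maps. Granting this, chaining the three displayed identities establishes \eqref{kun}.
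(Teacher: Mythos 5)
Your proof is essentially correct, but it takes a genuinely different route at the key step. Both arguments ultimately construct an $S_\l\times G$-equivariant derived-category isomorphism $(M^\bullet)^{\otimes\l}\xrightarrow{\sim}{\rm R}\Gamma(Y^\l,\E^{\boxtimes\l})$ and then take Euler characteristics using Lemma~\ref{tensor} and the construction of~\ref{construction}. The divergence is in how ${\rm R}\Gamma(Y^\l,\E^{\boxtimes\l})$ is matched with $C^\bullet(\mathcal U,\E)^{\otimes\l}$. The paper does not use the \v Cech complex of a cover of $Y^\l$ at all: it lifts the \v Cech resolution on $Y$ to a complex of sheaves $0\to\E\to\mathfrak C^\bullet(\mathcal U,\E)$ whose terms are pushforwards $j_*\E$ along affine open immersions, and then takes the external tensor product $\mathfrak C^\bullet(\mathcal U,\E)^{\boxtimes\l}$ on $Y^\l$. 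The terms of this complex are pushforwards from products of affine opens of $Y$, so it is an acyclic (hence $\Gamma$-computing) resolution of $\E^{\boxtimes\l}$, and its global sections are \emph{on the nose} $C^\bullet(\mathcal U,\E)^{\otimes\l}$, with the $S_\l$-action by the rule of signs already in place. You instead take the \v Cech complex of the product cover $\mathcal U^\l$ on $Y^\l$, recognize it as the diagonal of an $\l$-fold multi-cosimplicial object, and compare it with the total complex $C^\bullet(\mathcal U,\E)^{\otimes\l}$ by the equivariant Eilenberg--Zilber (shuffle) map. This works: the shuffle map is a classical Eilenberg--Mac Lane construction that is strictly $S_\l$-equivariant with the Koszul signs, and naturality in the cosimplicial inputs gives $G$-equivariance for free. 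But you are pricing in an extra piece of combinatorial machinery that the paper's sheaf-level resolution makes unnecessary, and you correctly flag this equivariance check as the place where the real work would lie. One small infelicity in the write-up, independent of the route chosen: the expressions $\sum_i(-1)^i[C^i]$ and $\sum_n(-1)^n[C^n(\mathcal U^\l,\E^{\boxtimes\l})]$ are not literal classes in $\Gr^{\ct}_0(G,\Z)$ or $\Gr^{\ct}_0(S_\l;G,\Z)$, since the \v Cech terms are not finitely generated $\Z$-modules; these displays should be read only as shorthand for the Euler characteristic of a quasi-isomorphic perfect complex, which is what your argument actually produces via $M^\bullet$ and via the construction of~\ref{construction} applied to $\E^{\boxtimes\l}$. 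With that understood, the three displayed identities chain correctly.
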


\begin{proof} This  follows \cite{KempfRocky}, proof of Theorem 14. Note that 
\cite[Theorem A]{KockRRtensor} gives a similar result for $G=\{1\}$.
Recall the construction of a  bounded complex $(M^\bullet, d^\bullet) $ of finitely generated projective $\Z[G]$-modules
which is quasi-isomorphic to ${\rm R}\Gamma(Y, \E)$ described in \ref{construction} that uses the
 \v Cech complex $C^\bullet({\mathcal U}, \E)$. In this case, all the terms of $C^\bullet({\mathcal U}, \E)$
are $\Z$-flat.
The  complex $C^\bullet({\cal U}, \E)$
is given as the global sections of a corresponding complex ${\mathfrak C}^\bullet({\mathcal U}, \E)$
of quasi-coherent  $\O_Y[G]$-modules   whose terms are direct sums
of sheaves of the form $j_* \E$, where $j: U_{j_1 \ldots  j_m}\hookrightarrow Y$
is the open immersion. Since all the intersections $U_{j_1 \ldots  j_m}$ are affine,  the complex 
${\mathfrak C}^\bullet({\mathcal U}, \E)$ gives an acyclic resolution 
$$
0\to \E\to {\mathfrak C}^\bullet({\mathcal U}, \E)
$$
of the $\O_Y[G]$-module $\E$.
Consider the (exterior) tensor product
$$
{\mathfrak C}^\bullet({\mathcal U}, \E)^{\boxtimes \l}=\bigotimes_{i=1}^\l p^*_i{\mathfrak C}^\bullet({\mathcal U}, \E)
$$
with $S_\l$-action defined following the rule of signs as before.  
Notice that all the terms of  ${\mathfrak C}^\bullet({\mathcal U}, \E)$ have $\Z$-flat stalks.
We can also see that 
${\mathfrak C}^\bullet({\mathcal U}, \E)^{\boxtimes \l}$  is acyclic; thus
it gives an acyclic resolution
$$
0\to \E^{\boxtimes \l}\to {\mathfrak C}^\bullet({\mathcal U}, \E)^{\boxtimes \l}
$$
which respects the $S_\l$-action.
It follows from the definition that the global sections of ${\mathfrak C}^\bullet({\mathcal U}, \E)^{\boxtimes \l}$
are $C^\bullet({\cal U}, \E)^{\otimes \l}$. Hence, we obtain
an isomorphism in the derived category of complexes of $\Z[S_\l\times G]$-modules 
$$
{\rm R}\Gamma(Y^\l, \E^{\boxtimes \l})\xrightarrow{\sim}C^\bullet({\cal U}, \E)^{\otimes \l}.
$$
Using $\phi: M^\bullet\xrightarrow{ }   C^\bullet({\cal U}, \E) $ 
 we also obtain a $\Z[S_l\times G]$-morphism of complexes
$$
 \phi^{\otimes \l}   :  (M^\bullet)^{\otimes \l}\xrightarrow{\ }   C^\bullet({\cal U}, \E)^{\otimes \l}.
$$
Since $\phi $ is a $\Z[G]$-quasi-isomorphism, 
and the terms of $M^\bullet$ and $C^\bullet({\cal U}, \E)$ are $\Z$-flat, 
$\phi^{\otimes \l}$ is a quasi-isomorphism.  
Combining these we get an isomorphism 
in the derived category of complexes of $\Z[S_\l\times G]$-modules
\begin{equation}\label{kunneth}
 (M^\bullet)^{\otimes \l}\xrightarrow{\sim} {\rm R}\Gamma(Y^\l, \E^{\boxtimes \l})
\end{equation}
and by Lemma \ref{tensor} (a),  $(M^\bullet)^{\otimes \l}$ is  perfect 
as a complex of $\Z[G]$-modules.
By taking the Euler characteristics of both sides 
we obtain the result.
\end{proof}
 
  \section{Adams operations}
 
 \subsection{Cyclic powers}
 
\begin{para}\label{411cyclic}
Again $\l$ is a prime and $\Zl=\Z[\l^{-1}]$.
By \cite{KockAdamsCamb} there is  
an   ``Adams operator" homomorphism
$$
 \psi^\l: {\rm K}_0(\Zl[G])\to {\rm K}_0(\Zl[G])
 $$  
defined using ``cyclic power operations" (following constructions of Kervaire and of Atiyah) as follows:
Set $\Delta={\rm Aut}(C_\l)= (\Z/\l\Z)^*$
and consider the semidirect product $C_\l\rtimes \Delta$ given by the tautological
$\Delta$-action on $C_\l$. We view $C_\l\rtimes \Delta$ as a subgroup
of $S_\l={\rm Perm}(C_\l)$ in the natural way. 
Denote by $\sigma$ the generator $1$ of $\Z/\l\Z=C_\l$. If $P$ is a finitely generated projective $\Zl[G]$-module,
the tensor product $ P^{\otimes \l}$ is naturally a $S_\l\times G$-module
which is $\Zl[G]$-projective by Lemma \ref{tensor}. Let $S=\Zl[X]/(X^{\l-1}+X^{\l-2}+\cdots +1)$
and set $z$ for the image of $X$ in $S$. We have $\Zl[C_\l]=\Zl\times 
S$ by $\sigma\mapsto z$. Then $z^\l=1$. The group $\Delta$ acts 
on $S$ via  automorphisms given by $\delta(z)=z^\delta$
 for $\delta\in \Delta=(\Z/\l\Z)^*$. 
 For $a\in \Z/\l\Z$, we set
 $$
 F_a(P^{\otimes \l}):=((S\otimes_\Zl P^{\otimes \l})_a)^\Delta
 $$
 where by definition,
 $$
 (S\otimes_\Zl P^{\otimes \l})_a=\{x\in S\otimes_\Zl P^{\otimes \l}\ |\ \sigma (x)=z^a\cdot x\}.
 $$
 (Notice that $\Delta$ acts on $(S\otimes_\Zl P^{\otimes \l})_a\subset S\otimes_\Zl P^{\otimes \l}$
 by the diagonal action.) By \cite[Cor. 1.4]{KockAdamsCamb}, $F_a(P^{\otimes \l})$ 
 are projective $\Zl[G]$-modules and we have by definition
$$
 \psi^\l([P]):=[F_0(P^{\otimes \l})]-[F_1(P^{\otimes \l})]
 $$
 in ${\rm K}_0(\Zl[G])$.
 \end{para}
 
 \begin{para} Consider now the Grothendieck rings $ \Gr_0(\Zl[C_\l])$, 
$ \Gr_0(\Zl[C_\l\times G])$, etc., where $\l$ is a prime
that does not divide the order $\#G$.
Inflation gives homomorphisms $ \Gr_0(\Zl[C_\l])\to \Gr_0(\Zl[C_\l\times G])$, $\Gr_0(\Zl[G])\to  \Gr_0(\Zl[C_\l\times G])$,
that we will suppress in the notation. Set $v=[\Zl[C_l]]$ for the class 
of the free module in $\Gr_0(\Zl[C_\l])$ and let $\alpha:=v-1$
be the class 
 of the augmentation ideal.
 \end{para}
 
\begin{para}\label{413para}
Write $\Zl[C_\l\times G]=\Zl[G]\times S[G]$. Suppose that $R$ is a $\Zl$-algebra.
If  $N$ is a $R[C_\l\times G]$-module then the $C_\l$-invariants $N^{C_\l}$ give a $R[G]$-module which is a direct summand of $N$. Hence, if $N$ is projective as an $R[G]$-module, resp. is $G$-c.t., then $N^{C_\l}$ is  projective as an $R[G]$-module, resp. is $G$-c.t. The functor
$N\mapsto N^{C_\l}$ from $R[C_\l\times G]$-modules to $R[G]$-modules is exact. Let us consider the homomorphism (cf. \cite[Lemma 4.3, Cor. 4.4]{KockRRtensor})
\begin{equation}
\zeta: \Kr_0(R[C_\l\times G])\to \Kr_0(R[G]), \quad \zeta([N])= \l\cdot [N^{C_\l}]-[N],
\end{equation}
where we subtract the class of $N$ as a $R[G]$-module by forgetting the $C_\l$-action.
This is a $\Gr_0(\Zl[G])$-module homomorphism. 
We can see that $\zeta$ vanishes on the subgroup 
$(v)\cdot {\rm K}_0(R[C_\l\times G])$:
Indeed, suppose that 
 $Q$ is a finitely generated projective $R[C_\l\times G]$-module
  and let us consider the $R[C_\l\times G]$-module 
 $R[C_\l]\otimes_{R}Q\simeq  \Zl[C_\l]\otimes_{\Zl}Q $. Then
 there is an isomorphism (Frobenius reciprocity)
 \begin{equation}\label{Frobenius}
R[C_\l]\otimes_{R}Q
 \simeq R[C_\l]\otimes_{R}P={\rm Ind}^{C_\l\times G}_G(P),
\end{equation}
  where $P={\rm Res}^{C_\l\times G}_{G}(Q)$ is a projective 
 $G$-module with trivial $C_\l$-action. We have
 $$
( R[C_\l]\otimes_{R}Q)^{C_\l}\simeq  (R[C_\l]\otimes_{R}P)^{C_\l}\simeq P
 $$
 and so $\zeta( R[C_\l]\otimes_{R}Q)=\l\cdot [P]-  [P^{\oplus \l}]=0$.
 \end{para}
 
\begin{para} Here $R$ is still a $\Zl$-algebra. Consider also the map
 $$
 \xi: \Kr_0(R[G])\to  \Kr_0(R[C_\l\times G])
 $$
 obtained by inflation (\emph{i.e.} by considering a $G$-module as a $C_\l\times G$-module
 with $C_\l$ acting trivially). This is a $\Gr_0(\Zl[G])$-module homomorphism and we
 can see from the definitions that
 \begin{equation}\label{zeta}
 \zeta\circ \xi=(\l-1)\cdot {\rm id}
 \end{equation}
 as maps $\Kr_0(R[G])\to  \Kr_0(R[G])$.
 \end{para}

\subsection{Cyclic and tensor powers}

\begin{para}
Recall   the tensor power operation
$$
\tau^\l: \Kr_0(\Z'[G])=\Gr^{\rm pr}_0(G, \Zl)\to \Gr^{\rm ct}_0(C_\l; G, \Zl)={\rm K}_0(\Zl[C_\l\times G])
$$
given by the construction in \ref{permutensorpara} applied to $R=\Zl$ 
followed by restriction from $S_\l\times G$ to $C_\l\times G$.

 \begin{prop}\label{Adams}
  For each $x$ in ${\rm K}_0(\Zl[G])$ we have
 \begin{equation*}\label{adams}
 \tau^\l(x)=\psi^\l(x)\quad \hbox{in \ } {\rm K}_0( \Zl[C_\l\times G])/(v)\cdot {\rm K}_0(\Zl[C_\l\times G]).
 \end{equation*}
 \end{prop}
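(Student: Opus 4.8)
The plan is to reduce first to a single projective module and then to carry out the classical Kervaire–Atiyah computation of cyclic powers $G$-equivariantly, using the splitting $\Zl[C_\l]=\Zl\times S$ of \ref{411cyclic}. Throughout one uses that $\l\nmid\#G$, so that $\l$ is a unit of $\Zl$; write $e_1=\frac1\l\sum_{i=0}^{\l-1}\sigma^i$ and $e_2=1-e_1$ for the associated central idempotents of $\Zl[C_\l]$, and regard $\psi^\l(x)$ on the right‑hand side as its image in ${\rm K}_0(\Zl[C_\l\times G])$ under inflation $\xi$.

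\emph{Step 1: reduction to $x=[P]$.} I would write $x=[P]-[Q]$ with $P,Q$ finitely generated projective and apply the formula of \ref{permutensorpara} to the complex $M^\bullet=[\,P\xrightarrow{0}Q\,]$ (with $P$ in degree $0$), so that $\tau^\l(x)=\sum_k(-1)^k[((M^\bullet)^{\otimes\l})^k]$ with $((M^\bullet)^{\otimes\l})^k=\bigoplus M^{\epsilon_1}\otimes_\Zl\cdots\otimes_\Zl M^{\epsilon_\l}$ over $\epsilon\in\{0,1\}^\l$ with $\sum_i\epsilon_i=k$, the group $C_\l$ acting by cyclic permutation of the factors with the Koszul signs. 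For $0<k<\l$ the occurring $\epsilon$ are nonconstant, so $C_\l$ (of prime order $\l$) permutes them freely; after rescaling to absorb the Koszul signs (whose product around a length‑$\l$ cycle is $+1$), each orbit contributes a summand isomorphic to ${\rm Ind}^{C_\l\times G}_G(W)$ with $W$ a projective $\Zl[G]$-module (Lemma \ref{tensor}), hence of class $v\cdot\xi([W])\in(v)\cdot{\rm K}_0(\Zl[C_\l\times G])$ by \eqref{Frobenius}. So modulo $(v)$ only $k=0,\l$ contribute; a short check (trivial for $\l$ odd, and for $\l=2$ by observing that the swap and the sign‑twisted swap modules on $Q\otimes_\Zl Q$ add up to induced modules) gives $\tau^\l([P]-[Q])\equiv[P^{\otimes\l}]-[Q^{\otimes\l}]\pmod{(v)}$, where $[P^{\otimes\l}]$ denotes the class of $P^{\otimes\l}$ with the cyclic $C_\l$-action and diagonal $G$-action. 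Since $\psi^\l$ is additive (\cite{KockAdamsCamb}, Cor.~1.4), it then suffices to prove $[P^{\otimes\l}]\equiv\xi(\psi^\l([P]))\pmod{(v)}$ for a single projective $P$.

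\emph{Step 2: the single-projective identity.} Here $\tau^\l([P])=[P^{\otimes\l}]$, and I would split it by the idempotents: $P^{\otimes\l}=e_1P^{\otimes\l}\oplus e_2P^{\otimes\l}$ as $\Zl[C_\l\times G]$-modules, with $e_1P^{\otimes\l}=(P^{\otimes\l})^{C_\l}$ carrying the trivial $C_\l$-action and $e_2P^{\otimes\l}$ a projective $S[G]$-module on which $\sigma$ is multiplication by $z$; hence $[P^{\otimes\l}]=\xi([(P^{\otimes\l})^{C_\l}])+[e_2P^{\otimes\l}]$. Because $\Q(\zeta_\l)/\Q$ is tame, $S$ is a $\Delta$-Galois $\Zl$-algebra: it is finite \'etale with $S\otimes_\Zl S\cong\prod_{\delta\in\Delta}S$, and Galois descent provides, for any $\Zl[G]$-module $L$ with a compatible $\Delta$-action, a natural isomorphism $(S\otimes_\Zl L)^{\Delta}\cong L$ of $\Zl[G]$-modules (diagonal $\Delta$-action on the left). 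Applied to $L=(P^{\otimes\l})^{C_\l}$ — which after $\otimes_\Zl S$ is the $\sigma$-fixed part $(S\otimes_\Zl P^{\otimes\l})_0$ — this yields $F_0(P^{\otimes\l})\cong(P^{\otimes\l})^{C_\l}$. For the $e_2$-part I would use $S\otimes_\Zl S\cong\prod_{\delta}S$ to decompose $S\otimes_\Zl(e_2P^{\otimes\l})$, with its two $S$-module structures, into the components indexed by $\delta\in\Delta$, on the $\delta$-th of which $\sigma=1\otimes z$ acts as $z^{\delta}$; the $z$-eigenspace $(S\otimes_\Zl P^{\otimes\l})_1$ is then the $\delta=\mathrm{id}$ component, isomorphic to $e_2P^{\otimes\l}$ as an $S[G]$-module, and a second application of descent to $(S\otimes_\Zl P^{\otimes\l})_1$ gives $e_2P^{\otimes\l}\cong S\otimes_\Zl F_1(P^{\otimes\l})$ as $S[G]$-modules. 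Since $[S\otimes_\Zl W]=\alpha\cdot\xi([W])$ for $\alpha=[S]=v-1$ the class of the augmentation ideal, one obtains $\tau^\l([P])=\xi([F_0(P^{\otimes\l})])+\alpha\cdot\xi([F_1(P^{\otimes\l})])$; and as $\alpha+1=v$, reduction modulo $(v)$ replaces $\alpha$ by $-1$, giving $\tau^\l([P])\equiv\xi([F_0(P^{\otimes\l})]-[F_1(P^{\otimes\l})])=\xi(\psi^\l([P]))$.

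\emph{Main obstacle.} Step 1 is bookkeeping with \ref{permutensorpara} and \eqref{Frobenius} (with a mild sign point at $\l=2$), and the idempotent splitting in Step 2 is immediate from the invertibility of $\l$. The technical heart is the descent analysis in Step 2 — identifying $e_2P^{\otimes\l}$ with $S\otimes_\Zl F_1(P^{\otimes\l})$ and $(P^{\otimes\l})^{C_\l}$ with $F_0(P^{\otimes\l})$ — which requires threading the $S$-module, $\Delta$-module and $G$-module structures on $S\otimes_\Zl P^{\otimes\l}$ through the splitting $S\otimes_\Zl S\cong\prod_{\delta}S$ while preserving projectivity; this is the $G$-equivariant version of the Kervaire–Atiyah computation underlying \cite{KockAdamsCamb}, Cor.~1.4.
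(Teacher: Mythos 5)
Your proof is correct and follows essentially the same path as the paper's: reduce to $x=[P]$ for a single projective $P$, split $P^{\otimes\l}$ by the central idempotents $e_1,e_2$ of $\Zl[C_\l]$, identify $e_1P^{\otimes\l}$ with $F_0(P^{\otimes\l})$ and the $e_2$-part with an $\alpha$-multiple of $F_1(P^{\otimes\l})$, then reduce modulo $(v)=(1+\alpha)$. Two execution-level differences are worth noting. (i) For the reduction to a single projective, you unwind the complex $(M^\bullet)^{\otimes\l}$ explicitly, tracking the free $C_\l$-orbits and the Koszul signs (including the delicate $\l=2$ point, which you handle correctly by pairing the sign-twisted and plain swap modules to get an induced class); the paper instead simply cites the additivity argument from \cite{KockRRtensor}, Prop.\ 1.13, so your Step 1 is a self-contained substitute for that reference. (ii) For the $e_2$-part, you run Galois descent along the $\Delta$-torsor $\Spec(S)\to\Spec(\Zl)$ to produce an actual $S[G]$-isomorphism $e_2P^{\otimes\l}\cong S\otimes_\Zl F_1(P^{\otimes\l})$; the paper's printed proof achieves the same $\Kr_0$-identity $[Q_1]=\alpha\cdot[Q_1^\Delta]$ via an explicit short exact sequence $0\to Q_1\to Q_1^\Delta\otimes_\Zl\Zl[C_\l]\to Q_1^\Delta\to 0$. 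These are equivalent — the short exact sequence is precisely what the descent isomorphism yields after tensoring the exact sequence $0\to S\to\Zl[C_\l]\to\Zl\to 0$ with $Q_1^\Delta$ — and in fact your descent formulation coincides with an alternative argument the author had drafted (and commented out) in the source. No gaps.
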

 
 \begin{proof}
 This follows the lines of the proof of \cite{KockRRtensor}, Prop. 1.13. First 
 observe that the argument in \emph{loc. cit.} gives that the map  
 $$
 \bar \tau^\l: {\rm K}_0(\Zl[G])\to {\rm K}_0( \Zl[C_\l\times G])/(v)\cdot {\rm K}_0(\Zl[C_\l\times G])
 $$
 obtained from $\tau^\l$ is a homomorphism. It is then enough to show
 the identity for $x=[P]$, the class of
a finitely generated projective $\Zl[G]$-module $P$.
As above, consider $P^{\otimes \l}$.
 Let $e=\l^{-1}\cdot (\sum_{i=0}^{\l-1}\sigma^i)$ be the idempotent in $\Zl[C_\l]$
 so that $e\cdot P^{\otimes \l}=(P^{\otimes \l})^{C_\l}$. Write 
 $P^{\otimes \l}=(P^{\otimes 
\l})^{C_\l}\oplus Q_1$
 with  $Q_1:=(1-e)\cdot P^{\otimes \l}$ an $S[G]$-module
 (via $\Zl[G]\to S[G]$).
 As in \cite[Ex. 1.5]{KockAdamsCamb} we see that we have
 $$
 F_0(P^{\otimes \l})=e\cdot P^{\otimes \l}=(P^{\otimes \l})^{C_\l}, \ \qquad F_1(P^{\otimes \l})=Q_1^\Delta.
 $$
 There is a short 
 exact sequence
 $$
 0\to Q_1\to Q_1^\Delta\otimes_\Zl \Zl[C_\l]\to Q_1^\Delta\to 0
 $$
 of $C_\l\times G$-modules where the first map is given
 by 
 $$
 q\mapsto \sum_{i=0}^{\l-1}\left(\sum_{a\in \Delta} a\sigma^{-i} \cdot q\right)\otimes \sigma^i
 $$
 and the second map is obtained by tensoring the augmentation map $\Z[C_\l]\to \Z$;
 here $Q^\Delta_1$ is viewed as having trivial $C_\l$-action.
 This gives $[Q_1]=\alpha\cdot [Q_1^\Delta]$ in ${\rm K}_0(\Zl[C_\l\times G])$.
\quash{ (Alternatively, observe that $\Delta$ can be identified with the 
 Galois group of the torsor $\Spec(S)\to \Spec(\Zl)$. The $S$-module $Q_1$
 carries an action of $\Delta$ which is compatible with the $S$-module structure and the action
 of $\Delta$ on $S$. Therefore, it descends to a $\Z'$-module $Q_1^\Delta$ and we have
  $$
 Q_1^\Delta\otimes_{\Zl}S\xrightarrow{\sim} Q_1.
 $$
 Since this isomorphism is canonical it also respects $G$-actions.
 Conclude by noticing that $S$ is also the kernel  $\alpha:={\rm ker}(\Zl[C_\l]\to \Zl)$
of the augmentation map.)}
 This now implies
 $
 [P^{\otimes \l}]=[F_0(P^{\otimes \l})]+\alpha\cdot [F_1(P^{\otimes \l})]
 $
 in ${\rm K}_0(\Zl[C_\l\times G])$, where we regard 
 $F_0(P^{\otimes \l})$, $F_1(P^{\otimes \l})$ as having trivial $C_\l$-action.
 Since $\alpha=v-1$,
 $$
 [P^{\otimes l}]=[F_0(P^{\otimes \l})]-[F_1(P^{\otimes \l})]\quad \hbox{\rm in\ } {\rm K}_0(\Zl[C_\l\times G])/(v)\cdot {\rm K}_0(\Zl[C_\l\times G]),
 $$
 and the result follows.
 \end{proof}
 
 \begin{prop}\label{adamsonfree}
 Let $x_0=[\Zl[G]]$  be the class of the free module  in $\Kr_0(\Zl[G])$.
 We have 
$  \psi^\l (x_0)= x_0$ in $\Kr_0(\Zl[G])$.
 \end{prop}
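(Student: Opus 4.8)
\emph{Proof plan.} The plan is to compute $\psi^{\ell}(x_{0})$ directly, using the explicit form of $\psi^{\ell}$ obtained in the proof of Proposition~\ref{Adams}: for a finitely generated projective $\Zl[G]$-module $P$ one has
\[
\psi^{\ell}([P]) \;=\; \bigl[(P^{\otimes\ell})^{C_{\ell}}\bigr]\;-\;\bigl[((1-e)P^{\otimes\ell})^{\Delta}\bigr],
\]
where $e=\ell^{-1}\sum_{i=0}^{\ell-1}\sigma^{i}\in\Zl[C_{\ell}]$ and the second module carries the $\Delta$-action of \ref{411cyclic}. I would apply this with $P=\Zl[G]$, so that $P^{\otimes\ell}=\Zl[G]^{\otimes_{\Zl}\ell}\cong\Zl[G^{\ell}]$ with the diagonal $G$-action and the permutation action of $C_{\ell}\rtimes\Delta\subseteq S_{\ell}$. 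The first move is to split $G^{\ell}=\Delta(G)\sqcup T$ into the small diagonal $\Delta(G)=\{(g,\dots,g)\}$ and its complement $T$, which is a decomposition of $(C_{\ell}\rtimes\Delta)\times G$-sets; hence $\Zl[G^{\ell}]=\Zl[\Delta(G)]\oplus\Zl[T]$ as $\Zl[(C_{\ell}\rtimes\Delta)\times G]$-modules. On $\Zl[\Delta(G)]$ the group $C_{\ell}\rtimes\Delta$ acts trivially and $G$ acts simply transitively, so $\Zl[\Delta(G)]\cong\Zl[G]$, and (as $1-e$ kills a module with trivial $C_{\ell}$-action) this summand contributes exactly $[\Zl[G]]-[0]=x_{0}$ to $\psi^{\ell}(x_{0})$.

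It then remains to show the $\Zl[T]$-summand contributes nothing, i.e. that $(\Zl[T])^{C_{\ell}}\cong((1-e)\Zl[T])^{\Delta}$ as $\Zl[G]$-modules. I would decompose $T$ into $(C_{\ell}\rtimes\Delta)\times G$-orbits $\Omega$ and treat each one. The crucial local fact is that the stabilizer in $(C_{\ell}\rtimes\Delta)\times G$ of a point $t\in T$ has \emph{trivial image in $G$} and meets $C_{\ell}$ trivially: writing an element as $(\pi,h)$ with $\pi$ the affine map $x\mapsto\delta x+i$ of $\Z/\ell\Z$, if $\delta\neq1$ then $\pi$ fixes a coordinate, forcing $h=1$; if $\delta=1,\ i\neq0$ then $\pi$ has order $\ell$, so $h^{\ell}=1$ and hence $h=1$ because $\gcd(\ell,\#G)=1$; and a nontrivial element of $C_{\ell}$ cannot fix a non-constant tuple. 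Thus the stabilizer has order prime to $\ell$, so by Schur--Zassenhaus it is conjugate in $C_{\ell}\rtimes\Delta$ to a subgroup $\bar H\leq\Delta$, and $\Omega\cong\bigl((C_{\ell}\rtimes\Delta)/\bar H\bigr)\times G$ as a $(C_{\ell}\rtimes\Delta)\times G$-set. Consequently $\Zl[\Omega]\cong W_{\Omega}\otimes_{\Zl}\Zl[G]$ with $W_{\Omega}:=\Zl[(C_{\ell}\rtimes\Delta)/\bar H]$ a $\Zl[C_{\ell}\rtimes\Delta]$-module and $\Zl[G]$ the regular representation.

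Since $C_{\ell}$ and $\Delta$ act only on $W_{\Omega}$, and $\Zl[G]$ is $\Zl$-flat with trivial $C_{\ell}$- and $\Delta$-action, invariants commute with $\otimes_{\Zl}\Zl[G]$, so $(\Zl[\Omega])^{C_{\ell}}\cong W_{\Omega}^{C_{\ell}}\otimes_{\Zl}\Zl[G]$ and $((1-e)\Zl[\Omega])^{\Delta}\cong\bigl((1-e)W_{\Omega}\bigr)^{\Delta}\otimes_{\Zl}\Zl[G]$; these are \emph{free} $\Zl[G]$-modules of ranks equal to the $\Zl$-ranks of $W_{\Omega}^{C_{\ell}}$ and $\bigl((1-e)W_{\Omega}\bigr)^{\Delta}$ (both are $\Zl$-free, being a submodule and a summand of the permutation $\Zl$-module $W_{\Omega}$). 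It remains to see these two ranks agree; since they can be computed after $\otimes_{\Zl}\Q$, a short Mackey-formula computation does it: $W_{\Omega,\Q}=\Ind_{\bar H}^{C_{\ell}\rtimes\Delta}\mathbf{1}$ has $C_{\ell}$-fixed part the permutation module $\Q[\Delta/\bar H]$ of dimension $[\Delta:\bar H]$, and subtracting this from $\mathrm{Res}_{\Delta}W_{\Omega,\Q}\cong\Q[\Delta/\bar H]\oplus\Q[\Delta]^{[\Delta:\bar H]}$ shows $(1-e)W_{\Omega,\Q}\cong\Q[\Delta]^{[\Delta:\bar H]}$, whose $\Delta$-fixed part again has dimension $[\Delta:\bar H]$. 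Hence for each $\Omega$ both summands are $\cong\Zl[G]^{[\Delta:\bar H]}$; summing over the orbits of $T$ gives $(\Zl[T])^{C_{\ell}}\cong((1-e)\Zl[T])^{\Delta}$, so $\psi^{\ell}(x_{0})=x_{0}+0=x_{0}$. For the last sentence of the statement (order prime to $6$) one simply observes that nothing special was used.

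\emph{Main obstacle.} The real work is the $\Zl[T]$-summand: one must show the ``error term'' $((1-e)\Zl[T])^{\Delta}$ is genuinely a free $\Zl[G]$-module of the expected rank, not merely stably equivalent to one. The cheap route via Proposition~\ref{Adams} (namely $\tau^{\ell}(x_{0})\equiv\psi^{\ell}(x_{0})$ modulo $(v)$, combined with $\zeta\circ\xi=(\ell-1)\,\mathrm{id}$) only yields $(\ell-1)(\psi^{\ell}(x_{0})-x_{0})=0$, which is not enough since $\Cl(\Zl[G])$ may contain $(\ell-1)$-torsion; it is the freeness coming from the triviality of the $G$-component of the point-stabilizers --- itself a consequence of $\gcd(\ell,\#G)=1$ --- that eliminates this torsion. (Alternatively, the statement can be deduced from the general formalism of cyclic-power Adams operations in \cite{KockAdamsCamb}.)
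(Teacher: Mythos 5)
Your proof is correct, and it rests on exactly the same pivotal observation as the paper's: the stabilizer in $(C_\ell\rtimes\Delta)\times G$ of any tuple in $G^\ell$ has trivial image in $G$, the case analysis ($\delta\neq 1$ has a fixed coordinate; $\delta=1,i\neq 0$ forces $g^\ell=1$) being the same in both arguments. Where you diverge is only in how the conclusion is drawn from that orbit structure. The paper does not peel off the diagonal: it uses the stabilizer fact to show that \emph{every} orbit $\Omega$ of $G^\ell$ contributes a $\Zl[G]$-free module $\Zl[G]\otimes_{\Zl}F_a(\Zl[\Gamma/\Gamma_{\underline g}])$ to each $F_a$, so $\psi^\ell(x_0)=m\,x_0$ for some integer $m$, and then pins down $m=1$ by a one-line rank (or $G=\{1\}$) comparison, using that $\psi^\ell$ preserves rank. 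You instead split $G^\ell=\Delta(G)\sqcup T$, read off the contribution $x_0$ from the diagonal, and show the $T$-contributions to $F_0$ and $F_1$ are literally isomorphic free $\Zl[G]$-modules by your further observation that stabilizers in $T$ meet $C_\ell$ trivially (hence are conjugate into $\Delta$), followed by the Mackey rank count. Both routes are sound; the paper's rank-comparison finish is a bit quicker and avoids the Mackey computation, while yours makes the cancellation of the ``error term'' explicit rather than inferred. Your closing remark -- that the cheap deduction from Proposition~\ref{Adams} and $\zeta\circ\xi=(\l-1)\,\mathrm{id}$ only yields $(\l-1)(\psi^\l(x_0)-x_0)=0$ and is thus insufficient -- is a correct and worthwhile diagnosis of why the orbit analysis is actually needed here.
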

 
 \begin{proof} Note that \cite[Theorem 1.6 (e)]{KockMathAnn} gives a corresponding result 
 for the (a priori different) Adams operators defined via exterior 
powers.
 Set $\Gamma=C_\l\rtimes(\Z/\l\Z)^*\subset S_\l={\rm Perm}({\mathbb F}_\l)$;
 the element $\gamma=(\sigma^a, b)$ is then the 
 (affine)
map ${\mathbb F}_\l\to {\mathbb F}_\l$ given by $\gamma(x)= bx+a$. Consider the  set $G^\l={\rm Maps}( {\mathbb F}_\l, G)$ with 
$G\times\Gamma$--action given by $(g,\gamma)\cdot (g_x)_{x\in {\mathbb F}_\l}=(gg_{\gamma^{-1}(x)})_{x\in {\mathbb F}_\l}$. 
Suppose $(g,\gamma)$ stabilizes $(g_x)_{x\in {\mathbb F}_\l}$
so that $gg_{x}=g_{\gamma(x)}$ for all $x\in   {\mathbb F}_\l$.
If $b\neq 1$, there is $y\in {\mathbb F}_\l$ such that $\gamma(y)=y$.
Then $gg_y=g_y$ and so $g=1$. If $b=1$, 
we have 
$gg_{x}= g_{x+a}$ for all $x\in   {\mathbb F}_\l$; this gives $g^\l=1$
and so again $g=1$, since $\l$ is prime to $\#G$. We conclude that the stabilizer in  $G\times\Gamma$ of 
any $\und g=(g_x)_{x\in {\mathbb F}_\l}\in G^\l$ lies in $1\times \Gamma$. Therefore, 
$G^\l$ is in $G\times\Gamma$--equivariant bijection with a disjoint union of sets of the form $G\times (\Gamma/\Gamma_{\und g})$ 
with $\Gamma_{\und g}$ the stabilizer subgroup in $\Gamma$. Hence, the 
$\Zl[G\times\Gamma]$-module $\Zl[G]^{\otimes\l}=\Zl[G^\l]$ is isomorphic to a direct sum
of modules of the form $\Zl[G]\otimes_{\Z'}\Zl[\Gamma/\Gamma_{\und g}]$.
By the definition of $F_a$ (\cite[\S 1]{KockAdamsCamb}, or \S \ref{411cyclic}) we see that $F_a(\Zl[G]\otimes_{\Z'}\Zl[\Gamma/\Gamma_{\und g}])\simeq \Zl[G]\otimes_{\Zl} F_a(\Zl[\Gamma/\Gamma_{\und g}])$. We conclude that $F_a(\Zl[G]^{\otimes\l})$ are free $\Zl[G]$-modules.
Therefore, $\psi^\l (x_0)= m\cdot x_0$ in $\Kr_0(\Zl[G])$, for some $m\in \Z$,
and by comparing $\Zl$-ranks (for this we may assume $G=\{1\}$) we can easily see that $m=1$.
\end{proof}
 \end{para}

 \begin{para}
 If $Z$ is a projective flat $G$-scheme over $\Spec(\Zl)$, there is an Adams operation
 $\psi^\l: \Kr_0(G, Z)\to \Kr_0(G, Z)$ (\cite{KockGRR}). By an argument as above, or by using the equivariant splitting principle as in 
 \cite{KockRRtensor}, we can see that 
 \begin{equation}\label{424}
 \psi^\l(\F)=[\F^{\otimes \l}]  \end{equation}
 in the quotient  $\Kr_0(C_\l\times G, Z)/(v)\cdot \Kr_0(C_\l\times G, Z)$.
 \end{para}
 
 \subsection{The Cassou-Nogu\`es--Taylor Adams operations}
 
 \begin{para}
 We continue to assume that $\l$ is prime to the order $\# G$. Then, by \cite{SwanAnnals},
 finitely generated projective $\Z'[G]$-modules are locally free.
 
 If $(n, \#G)=1$, we denote by $\psi^{\CNT}_n: \Cl(\Z[G])\to \Cl(\Z[G])$ the  Adams operator homomorphism  defined by Cassou-Nogu\`es-Taylor (\cite{CNTAdams},
\cite{MJTclassgroups}).
(Roughly speaking, this is  given, via the Fr\"ohlich description,
 as the dual of  the Adams operation $\psi^n(\chi)(g):=\chi(g^n)$ on the character group.)
 The operators $\psi^{\CNT}_n$ also restrict to operators on $\Cl(\Zl[G])$;
 we will denote these by the same symbol.  
 Note that we can identify  $\Cl(\Zl[G])$ with the subgroup of $\Kr_0(\Zl[G])$ of elements of rank $0$.
 Denote by $r: {\rm K_0}(\Zl[G])\to \Z$ the rank homomorphism.

K\"ock has shown that the Cassou-Nogu\`es-Taylor 
Adams operators can be described in terms of (arguably more natural) Adams operators $\psi^\l_{\rm ext}$ defined via exterior 
powers and the Newton polynomial (\cite[Theorem 3.7]{KockMathAnn}). Here, we use 
his arguments
to obtain a similar relation with the Adams operators $\psi^\l$
of \cite{KockAdamsCamb} defined via cyclic powers 
which are better suited to our application.

\begin{prop}
\label{KockvsCNT}
 Let $\l'$ be a prime with $\l \l'\equiv 1\mod {\rm exp}(G)$
and set $x_0=[\Zl[G]]$ for the class of the free module
of rank $1$.
Then we have
\begin{equation}\label{adamscompare}
 \psi^\l(x-r(x)\cdot x_0)=\l\cdot \psi^{\rm CNT}_{\l'}(x-r(x)\cdot x_0)
\end{equation}
for  all $x\in {\rm K_0}(\Zl[G])$.
\end{prop}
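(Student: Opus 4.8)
The plan is to follow K\"ock's strategy from \cite[Theorem 3.7]{KockMathAnn}, replacing the exterior-power Adams operators by the cyclic-power operators $\psi^\l$ of \cite{KockAdamsCamb}. The key external input will be K\"ock's comparison result: the Cassou-Nogu\`es--Taylor operator $\psi^{\CNT}_n$ on $\Cl(\Zl[G])$ can be expressed in terms of the exterior-power Adams operator $\psi^\l_{\rm ext}$ (where $\l n \equiv 1 \bmod {\rm exp}(G)$) via the relevant Newton-polynomial identity. So I would first recall that relation, and then it suffices to compare $\psi^\l$ with $\psi^\l_{\rm ext}$ on the rank-zero subgroup $\Cl(\Zl[G])\subset \Kr_0(\Zl[G])$, up to the factor $\l$ appearing in \eqref{adamscompare}.

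First I would reduce to the case $x = [P] - r([P])\cdot x_0$ with $P$ a finitely generated projective $\Zl[G]$-module, using that both sides of \eqref{adamscompare} are homomorphisms on $\Kr_0(\Zl[G])$ vanishing appropriately on $\langle x_0\rangle$ (by Proposition \ref{adamsonfree}, $\psi^\l(x_0) = x_0$, so $\psi^\l(x - r(x)x_0)$ depends only on the class of $x$ modulo $\langle x_0\rangle$, i.e.\ on $\overline\chi$-type data in $\Cl(\Zl[G])$; likewise $\psi^{\CNT}_{\l'}$ is defined on $\Cl$). Next, the heart of the matter: I would relate $\psi^\l$ to $\psi^\l_{\rm ext}$ via the identity $\psi^\l = \l\cdot \psi^\l_{\rm ext}$ on rank-zero classes, or more precisely via the Newton-polynomial expression for $\psi^\l$ in terms of the lambda-operations $\lambda^i$ (equivalently exterior powers $\Lambda^i$), exactly the type of computation carried out in \cite[Prop. 1.13, Lemma 4.x]{KockRRtensor} and \cite[\S 1]{KockAdamsCamb}. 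The point is that over $\Zl[C_\l\times G]$, Proposition \ref{Adams} identifies $\tau^\l(x)$ with $\psi^\l(x)$ modulo $(v)$, while the exterior-power side is governed by the splitting principle; comparing the two on a line bundle and then extending by the splitting principle (as in \cite{KockRRtensor}) gives the factor $\l$.

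Concretely, the steps in order would be: (1) reduce to $x = [P] - r([P])x_0$; (2) apply the equivariant splitting principle of \cite{KockRRtensor} to reduce, after a suitable faithfully flat base change, to the case where $P$ is built from "line bundles", i.e.\ rank-one pieces; (3) on such a rank-one class $u$ with $r(u) = 0$, compute both $\psi^\l(u)$ via the cyclic-power definition (using the description of $F_0$, $F_1$ as in the proof of Proposition \ref{Adams} and \cite[Ex.\ 1.5]{KockAdamsCamb}) and $\psi^{\CNT}_{\l'}(u)$ via the Fr\"ohlich description and the Newton polynomial, and verify the identity $\psi^\l(u) = \l\cdot \psi^{\CNT}_{\l'}(u)$ directly; (4) invoke K\"ock's argument \cite[Theorem 3.7]{KockMathAnn} verbatim, with $\psi^\l_{\rm ext}$ replaced by $\psi^\l$, to propagate the rank-one identity to all of $\Kr_0(\Zl[G])$; the congruence $\l\l'\equiv 1 \bmod {\rm exp}(G)$ is exactly what makes the character-theoretic Adams operation $\psi^{\l'}(\chi)(g) = \chi(g^{\l'})$ match the cyclic-power operation $\psi^\l$ after inverting $\l$.

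The main obstacle I expect is step (3)/(4): tracking the normalization constants carefully enough to see that the correct factor is exactly $\l$ (and not, say, $\l^{\#G}$ or a $\pm$ sign), and checking that K\"ock's exterior-power argument transfers without change to the cyclic-power setting — in particular that the splitting principle and the vanishing of the relevant higher terms still hold for $\psi^\l$. This is essentially bookkeeping inside the framework of \cite{KockAdamsCamb, KockRRtensor, KockMathAnn}, but it is delicate; the key simplification is that we only need the identity on the rank-zero (i.e.\ $\Cl$) part, where $\Cl(\Zl[G])$ is $p$-torsion-free for the relevant primes and the $\Gr_0(\Zl[G])$-module structure factors through $\scrR_\Q(G)$, so the comparison reduces to a character-level computation.
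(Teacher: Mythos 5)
Your proposal diverges from the paper's proof at a crucial point, and I don't think the route you sketch actually works. The paper does not use the equivariant splitting principle here. Instead it reduces to $x=[P]$ with $P$ a locally free left ideal of $\Zl[G]$ and then works through the Fr\"ohlich idelic Hom-description: the class $x-x_0$ is represented by a tuple of reduced-norm classes $[\lambda_v]\in \Kr_1(\Q_v[G])$, and the whole argument takes place at the level of $\Kr_1$ of the local group algebras, not at the level of modules. The technically loaded step, which you omit, is showing that the cyclic-power operators $\psi^\l$ of \cite{KockAdamsCamb} commute with the connecting homomorphism $\Phi\colon \Kr_1(\Q_v[G])\to \Kr_0(\Zl[G])$ of the localization sequence. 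This is where K\"ock's topological argument (operations defined on the level of $\Kr$-theory spaces in the style of Gillet--Grayson) enters, and it is exactly what lets one compute a Fr\"ohlich representative of $\psi^\l(x-x_0)$ as $(\psi^\l([\lambda_v]))_v$. Only after that does one invoke \cite[Cor.~(c) of Prop.~1]{KockAdamsPolish} to identify $\psi^\l$ with $\psi^\l_{\rm ext}$ on $\Kr_1(\Q_v[G])$ and finish with a character computation, where the congruence $\l\l'\equiv 1\pmod{{\rm exp}(G)}$ produces the factor $\l$.

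The specific gap in your plan: step (2)--(3), reducing via the splitting principle to ``rank-one pieces'' and ``verifying directly,'' does not make sense in this context. The CNT operator $\psi^{\rm CNT}_{\l'}$ is defined only on $\Cl(\Z[G])$ via the Hom-description; it is not a module-theoretic operation given by a Newton polynomial in $\lambda$-operations, and it has no natural extension to the $K$-theory of a flag scheme over $\Spec(\Zl)$ to which a splitting-principle reduction would send you. Likewise, a general rank-zero element of $\Cl(\Zl[G])$ has no module realisation on which you can compute $F_0$, $F_1$ directly; you only get an idelic representative, and relating that to the cyclic-power construction is precisely the localization-sequence argument you skip. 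So the comparison cannot be ``propagated from rank-one classes''; it must be made, as in the paper, at the level of $\Kr_1(\Q_v[G])$ after establishing the commutation with $\Phi$.
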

  
 \begin{proof}  
Let us explain how we can deduce this by combining results and arguments 
from   \cite{KockAdamsPolish},  \cite{KockAdamsCamb}, \cite{KockMathAnn}.
Since $\psi^\l: {\rm K_0}(\Zl[G])\to {\rm K_0}(\Zl[G])$ 
is a group homomorphism (\cite[Prop. 2.5]{KockAdamsCamb})
that preserves the rank, $\psi^\l$ restricts to an (additive) operation on 
the subgroup $\Cl(\Zl[G])$ of rank $0$ elements. The group 
${\rm K_0}(\Zl[G])$ is generated by the classes of  
 locally free  left ideals in $\Z'[G]$; hence, we can assume $x=[P]$ where
 $P$ is such a ideal.
 We may assume that  $P\otimes_{\Zl}\Z_v= \Z_v[G]\cdot \lambda_v$, with $\lambda_v\in  \Q_v[G] ^*\cap  \Z_v[G]$,
 so that  $P=\bigcap_{v\neq l}(\Z_v[G]\cdot \lambda_v\cap \Q[G]) $.
Then a Fr\"ohlich representative of $x-x_0$ is given via 
 the classes (``reduced norms") $[\lambda_v]\in \Kr_1(\Q_v[G])$ of $\lambda_v\in \Q_v[G]^*$.
 Note that we have (\cite{MJTclassgroups})
 \begin{equation}\label{frohomo}
 \Kr_1(\Q_v[G])\simeq {\rm Hom}_{{\rm Gal}(\bar\Q_v/\Q_v)}(\Kr_0(\bar\Q_v[G]), \bar\Q_v^*).
\end{equation}
By \cite{KockAdamsCamb},  cyclic power Adams operators $\psi^\l$ can also be defined 
on the higher $\Kr$-groups  $\Kr_i(R[G])$, $i\geq 1$, for every commutative $\Zl$-algebra. In particular, we have $\psi^\l$ on $\Kr_1(\Z_v[G])$, 
$\Kr_1(\Q_v[G])$, for $v\neq (\l)$, and on $\Kr_1(\Q[G])$.  Using \cite[Cor. 1.4 (c)]{KockAdamsCamb}, 
we see that the base change
homomorphism
$$
\Kr_1(\Z_v[G])\to \Kr_1(\Q_v[G])
$$
commutes with   $\psi^\l$. In fact, by \cite[\S 3]{KockAdamsCamb}
the operators $\psi^\l$ are  defined via the cyclic operations $[a]_\l$ of \emph{loc. cit.} as
$\psi^\l=[0]_\l-[1]_\l$. Moreover, the operations $[a]_\l$ 
are given  via continuous maps on the level of spaces
that give $\Kr$-theory in the style of Gillet-Grayson (see  \emph{loc. cit.}). Using this we can see that  the 
topological argument of the proof of \cite[Prop. 3.1]{KockMathAnn} also applies to the operators $[a]_\l$ and $\psi^\l$ and so we obtain the commutative diagram of \cite[Prop. 3.1]{KockMathAnn}
for $K=\Q$, $\mathfrak p=(v)$ and $\gamma=[a]_\l$ or $\psi^\l$, \emph{i.e.} that $[a]_\l$ and $\psi^\l$ commute with the connecting homomorphism $\Phi:  \Kr_1(\Q_v[G])\to \Kr_0(\Zl[G])$.
As in the proof of 
\cite[Cor. 3.5]{KockMathAnn}  this, together with the localization sequence,
implies that 
 the element $\psi^\l(x-x_0)=\psi^\l(x)-\psi^\l(x_0)$
 has Fr\"ohlich representative given by
 $
 (\psi^\l([\lambda_v]))_v
 $
 with $\psi^\l: \Kr_1(\Q_v[G])\to \Kr_1(\Q_v[G])$ as above
(see \cite{KockMathAnn} for more details).

Now by
 \cite[Cor. (c) of Prop. 1]{KockAdamsPolish} the operator $\psi^\l$ on $\Kr_1(\Q_v[G])$ and  $\Kr_1(\Q[G])$ agrees with the (more standard) Adams operator $\psi^\l_{\rm ext}$
defined using exterior powers and the Newton polynomial (as for example in 
\cite{KockMathAnn}). 
In fact, then by \cite[Cor. 1 of Theorem 1]{KockAdamsPolish}
(or the proof of \cite[Theorem 3.7]{KockMathAnn}),
 $
 \psi^\l([\lambda_v]) 
 $ is  given via (\ref{frohomo}) by $\chi\mapsto ([\lambda_v](\psi^{\l'}(\chi)))^\l $.
The result now follows  from  the Fr\"ohlich description 
of $\Cl(\Zl[G])$ (\cite{MJTclassgroups}) and  the definition of the Cassou-Nogu\`es-Taylor 
Adams operator.
\end{proof}
\end{para}

\begin{Remark}
{\rm Since $\psi^\l$ is only defined for $\Zl=\Z[\l^{-1}]$-algebras the
above proposition does not give an expression for the Cassou-Nogu\`es-Taylor
Adams operators on $\Cl(\Z[G])$. Still, this weaker result 
is enough for our purposes.}
\end{Remark}

 \section{Localization and Adams-Riemann-Roch identities}

\subsection{Localization for $C_\l$- and $C_\l\times G$-modules}

\begin{para}
For simplicity, we set $\scrR(C_\l)=\Gr_0(\Zl[C_\l])'=\Gr_0(\Zl[C_\l])[\l^{-1}]$, 
$\scrR(C_\l\times G)=\Gr_0(\Zl[C_\l\times G])'$, etc., where $\l$ is a prime
that does not divide the order $\#G$.
Inflation gives homomorphisms $\scrR(C_\l)\to \scrR(C_\l\times G)$, $\scrR(G)\to  \scrR(C_\l\times G)$,
that we will suppress in the notation.
Denote by $\alpha$ the class in $\scrR(C_\l)$
 of the augmentation ideal of $\Zl[C_\l]$ and set $v=1+\alpha=[\Zl[C_\l]]$.
 The ring structure in $\scrR(C_\l)$ is such that $v^2=\l\cdot v$.
 Also denote by $I_G$ the ideal of $\scrR(G)$ given as the kernel
 of the rank homomorphism. Set
 $$
 \scrR(C_\l\times G)^\wedge:=\varprojlim\nolimits_{n} \scrR(C_\l\times G)/(I^n_G \scrR(C_\l\times G)+v\scrR(C_\l\times G)).
 $$
 In general, if $M$ is an $\scrR(C_\l\times G)$-module, we set $M^\wedge:=\varprojlim_{n} M/(I^n_G M+vM)$
 which is a $\scrR(C_\l\times G)^\wedge$-module.
The maximal ideals of $\scrR(C_\l\times G)^\wedge$ correspond to maximal ideals of $\scrR(C_\l\times G)$ that contain
$I_G \scrR(C_\l\times G)+v\scrR(C_\l\times G)$; we can see (\emph{cf.} \ref{idealspara}) that these are the maximal ideals of the form $\rho_{((\sigma, 1), (q))}$
with $\l\neq q$ and $\sigma$ is a generator of $C_\l$. (The ideal $\rho_{((\sigma, 1), (q))}$ is independent of the choice of the generator $\sigma$,
there is exactly one ideal for each prime $q \neq \l$.) If $\phi: M\to N$ is an $\scrR(C_\l\times G)$-module
homomorphism such that the localization $\phi_\rho: M_\rho\to N_\rho$ is an isomorphism for every $\rho$
of the form $\rho_{((\sigma, 1), (q))}$ with $q\neq \l$ as above, then the induced $\phi^\wedge: M^\wedge \to N^\wedge$ is an isomorphism of $\scrR(C_\l\times G)^\wedge$-modules.
\end{para}

\subsection{Cyclic localization on products}\label{cyclicloc}

\begin{para}
Fix a prime $\l$ which does not divide $\#G$ and as before  set $\Zl=\Z[\l^{-1}]$.
Suppose that $Z\to \Spec(\Zl)$ is a quasi-projective scheme equipped with an action of $G$.
We will consider 
``localization" on the fixed points for the action of the cyclic group $C_\l$ on the $\l$-fold fiber product $Z^\l$ over 
$\Spec(\Z')$. In fact, the product $C_\l\times G$ acts on $Z^\l$;  $C_\l$ acts by permutation of the factors and
$G$ acts diagonally.
\end{para}

\begin{para}\label{cyclicloc2} Consider a maximal ideal $\rho$ of $\scrR(C_\l\times G)$ of the form $\rho_{((\sigma, 1), (q))}$
with $q\neq \l$ and $\sigma$ a generator of $C_\l$ as before. The corresponding fixed point subscheme $Z^\rho$ of $Z^\l$ is by definition the reduced union of the translates of the fixed
subscheme $Z^{(\sigma, 1)}$ of the element $(\sigma, 1)$. In our case, this is the diagonal:
 \begin{equation}
(Z^\l)^{\rho}=(Z^\l)^{(\sigma, 1)}\cdot (C_\l\times G)=\Delta(Z)=Z\hookrightarrow Z^\l.
\end{equation}
Consider the homomorphism obtained by push-forward of coherent sheaves
$$
\Delta_*: \Gr_0(C_\l\times G, Z)\to \Gr_0(C_\l\times G, Z^\l).
$$
The ``concentration" theorem   \cite[Theorem 6.1]{CEPTRR} implies that after localizing at 
any  $\rho$ as above,  we obtain an $\scrR(C_\l\times G)_\rho$-module isomorphism
$$
(\Delta_*)_\rho: \Gr_0(C_\l\times G, Z)_\rho\xrightarrow{\sim} \Gr_0(C_\l\times G, Z^\l)_\rho.
$$
It now follows as above that $\Delta_*$ gives a $\scrR(C_\l\times G)^\wedge$-module isomomorphism
\begin{equation}
\Delta^\wedge_*: \Gr_0(C_\l\times G, Z)^\wedge \xrightarrow{\sim} \Gr_0(C_\l\times G, Z^\l)^\wedge.
\end{equation}
Here the completions are given as above for the $\scrR(C_\l\times G)$-modules
$\Gr_0(C_\l\times G, Z)'$ and $\Gr_0(C_\l\times G, Z^\l)'$.
 
\begin{Definition}
Let 
$
L_\bullet: \Gr_0(C_\l\times G , Z^{\l})\to \Gr_0(C_\l\times G, Z)^\wedge
$
be the $\scrR(C_\l\times G)$-homomorphism defined as the composition of   $\Gr_0(C_\l\times G, Z^{\l})\to 
 \Gr_0(C_\l\times G, Z^{\l})^\wedge$ with the inverse of $\Delta_*^\wedge$.
Set
$
\al:=L_\bullet(1)\in \Gr_0(C_\l\times G, Z)^\wedge.
$
Then 
\begin{equation}\label{wedge1}
\Delta^\wedge_*(\al)=1=[\O_{Z^{\l}}].
\end{equation}
\end{Definition}
\end{para}

 \begin{para}
 If $\F$ is a $G$-equivariant locally free coherent $\O_Z$-module  on $Z$, then $\F^{\boxtimes \l}$
 is a $C_\l\times G$-equivariant locally free coherent $\O_{Z^{\l}}$-module on $Z^{\l}$ and $\F^{\otimes \l}\simeq \Delta^*(\F^{\boxtimes \l})$ as $C_\l\times G$-sheaves on $Z$. Using (\ref{wedge1}) and the projection formula we obtain
\begin{equation*}
 \F^{\boxtimes \l}=\Delta^\wedge_*(\al)\otimes \F^{\boxtimes \l}=
\Delta^\wedge_*(\al\otimes \Delta^*(\F^{\boxtimes \l}))=\Delta^\wedge_*(\al\otimes \F^{\otimes \l}).
\end{equation*}
Therefore,
\begin{equation}\label{wedge2}
\F^{\boxtimes \l}=\Delta^\wedge_*(\al\otimes \F^{\otimes \l})
\end{equation}
in $\Gr_0(C_\l\times G, Z^{\l})^\wedge$.
\end{para}

\begin{para} Suppose that $Z$ is smooth over $\Spec(\Zl)$, then $Z^{\l}$ is also smooth.
Denote by ${\mathscr N}_{Z|Z^{\l}}$ the locally free conormal bundle ${\mathscr I}_Z/{\mathscr I}_{Z}^2$ of $Z\subset Z^{\l}$
which gives a class in $\Kr_0(C_\l\times G, Z)$. (Here   ${\mathscr I}_Z$ is the ideal sheaf of $Z\subset Z^{\l}$).
As in the proof of the  Lefschetz-Riemann-Roch theorem (\cite{ThomasonLRR}, \cite{CEPTRR}), 
we can see using the self-intersection formula that the homomorphism $L_\bullet$ is given as the composition
of the restriction 
$$
\Gr_0(C_\l\times G, Z^{\l})\cong \Kr_0(C_\l\times G, Z^{\l})\xrightarrow {\Delta^*} \Kr_0(C_\l\times G, Z)\cong \Gr_0(C_\l\times G, Z)
$$
 followed by 
multiplication by $\lambda_{-1}({\mathscr N}_{Z|Z^{\l}})^{-1}\in \Kr_0(C_\l\times G, Z)^\wedge$,
and so $\al$ is the image of $\lambda_{-1}({\mathscr N}_{Z|Z^{\l}})^{-1}$ in $\Gr_0(C_\l\times G, Z)^\wedge$. By \emph{loc. cit.} 
$$
\lambda_{-1}({\mathscr N}_{Z|Z^{\l}}):=\sum_{i=0}^{\rm top}(-1)^i[\wedge^i{\mathscr N}_{Z|Z^{\l}}]
$$
is invertible in all the localizations $\Kr_0(C_\l\times G, Z)_\rho$, with $\rho$ as above,
so it is invertible in $\Kr_0(C_\l\times G, Z)^\wedge$.
\end{para}

\begin{para}\label{nonsmoothlambda} 
If $Z$ is projective but not necessarily smooth, we can 
describe $L_\bullet$ by following \cite{BFQ, Quart}: Embedd $Z$ as a closed $G$-subscheme of a smooth projective bundle $P={\mathbb P}(\Gg)\to \Spec(\Z')$ 
with $G$-action (\cite{KockGRR}). Then $P^{\l}$ is also smooth over $\Spec(\Z')$.
Let us write $i: Z\hookrightarrow P$, $i^\l: Z^{\l}\hookrightarrow P^{\l}$, for the corresponding closed immersions.  Starting with  $x\in \Gr_0(C_\l\times G, Z^{\l})$ we first ``resolve $x$ on $P^{\l}$", \emph{i.e.} represent the push-forward $(i^\l)_*(x)$ by
a bounded complex $\E^\bullet(x)$ of $C_\l\times G$-equivariant locally free coherent sheaves  on $P^{\l}$ which is exact off $Z^{\l}$
(so that the homology of $\E^\bullet(x)$ gives back $x$). Next, we restrict the complex $\E^\bullet(x)$ to $P$ to obtain $\E^\bullet(x)_{|P}$,  a complex exact off $Z=P\cap Z^{\l}$. Finally, we take the class $[h(\E^\bullet(x)_{|P})]$ of the  homology of $\E^\bullet(x)_{|P}$
to obtain an element of $\Gr_0(C_\l\times G, Z)$ (\emph{cf.} \cite{SouleBook}).
For simplicity, write $T_\l=C_\l\times G$.
Then $x\mapsto [h(\E^\bullet(x)_{|P})]$ is the composition
\begin{equation}
\Gr_0(T_\l, Z^{\l})\xrightarrow{\buildrel{h^{-1}}\over\sim } \Kr^{Z^{\l}}_0(T_\l, P^{\l})\xrightarrow{ |P}\Kr^{P\cap Z^{\l}}_0(T_\l, P)\xrightarrow{h} \Gr_0(T_\l, Z),
\end{equation}
where in the middle we have the relative $\Kr$-groups of complexes of $T_\l$-equivariant  locally free
sheaves exact off $Z^{\l}$, resp. $Z^{\l}\cap P=Z$ (see \cite[\S 3]{SouleBook}
and
\cite[Definition 2.1]{BFQ} for more details).
Finally, we multiply $[h(\E^\bullet(x)_{|P})]$  by  the restriction 
$\lambda_{-1}({\mathscr N}_{P|P^{\l}})^{-1}_{|Z}$
in  $\Kr_0(C_\l\times G, Z)^\wedge$  of the inverse $\lambda_{-1}({\mathscr N}_{P|P^{\l}})^{-1}\in   \Kr_0(C_\l\times G, P)^\wedge$.
We claim that 
\begin{equation}\label{5.2.12}
L_\bullet(x)=[h(\E^\bullet(x)_{|P})]\cdot \lambda_{-1}({\mathscr N}_{P|P^{\l}})^{-1}_{|Z}.
\end{equation}
To verify  (\ref{5.2.12}), we can follow the argument in the proof of Lemma 1 in \cite{Quart}
which applies in this situation. The reader is referred to \emph{loc. cit.} for more details. 
\end{para}
 
\subsection{Adams-Riemann-Roch identities}

\begin{para} Let $f: X\to \Spec(\Z)$ be projective and flat with a tame action of $G$.
Then the quotient scheme $\pi: X\to Y=X/G$ exists and $\pi$ is finite. Choose a prime $\l$
with 
$(\l, \#G)=1$. We will apply the set-up
of the previous section to $Z=X'=X\times_{\Spec(\Z)}\Spec(\Z')$.

We have the projective equivariant Euler characteristics
$$
f_*: \Gr_0(C_\l\times G, X')'\to \Kr_0(\Zl[C_\l\times G])'
$$
 and   similarly $f^{\l}_*:
 \Gr_0(C_\l\times G, (X')^\l)'\to \Kr_0(\Zl[C_\l\times G])'$. (Here, we omit the superscript $\ct$ from the notations $f_*^{\ct}$ and $(f^l_*)^{\ct}$.)
These are both $\scrR(C_\l\times G)=\Gr_0(\Z'[C_\l\times G])'$-module
homomorphisms  
and induce $\scrR(C_\l\times G)^\wedge$-homomorphisms
\begin{eqnarray*}
f^\wedge_* :  \Gr_0(C_\l\times G, X')^\wedge \to \Kr_0(\Zl[C_\l\times G])^\wedge,  \\
(f^{\l}_*)^\wedge :  \Gr_0(C_\l\times G, (X')^\l)^\wedge\to \Kr_0(\Zl[C_\l\times G])^\wedge.
\end{eqnarray*}
Now suppose $\F$ is a $G$-equivariant coherent locally free $\O_X$-module.
We apply $(f^{\l}_*)^\wedge $ on both sides of  (\ref{wedge2}). Using 
$f^{\l}_*\cdot \Delta_*=f_*$ we obtain 
\begin{equation}\label{ARRproof1}
(f^{\l})^\wedge_*(\F^{\boxtimes \l})=f^\wedge_*(\al\otimes \F^{\otimes \l}) 
\end{equation}
in $\Kr_0(\Zl[C_\l\times G])^\wedge$.
\end{para}

\begin{para}
Assume in addition that $\pi$ is flat, then so is $\pi^\l: X^\l\to Y^\l$. 
Set $\E=\pi_*\F$. This is a 
$G$-c.t. coherent  $\O_Y[G]$-module which is $\O_Y$-locally free.
We have 
  $\pi^\l_*(\F^{\boxtimes \l})=\E^{\boxtimes \l}$ on $Y^{\l}$.
The K\"unneth formula (\ref{kun}) now gives 
\begin{equation*}
f^{\l}_*( \F^{\boxtimes \l})=
g^{\l}_*( \pi^\l_*(\F^{\boxtimes \l}))=g^{\l}_*(\E^{\boxtimes \l})=\tau^{\l}(g^\ct_*(\E))=\tau^{\l}(f^\ct_*(\F)).
\end{equation*}
Combining this with (\ref{ARRproof1}) gives
\begin{equation*}
\tau^{\l}(f^\ct_*(\F))=f^\wedge_*(\al \otimes \F^{\otimes \l})
\end{equation*}
in $\Kr_0(\Zl[C_\l\times G])^\wedge$. Using Proposition \ref{Adams} we   obtain:
\begin{equation}\label{ARRproof4}
\psi^\l(f^\ct_*(\F))=f^\wedge_*(\al \otimes \F^{\otimes \l}).
\end{equation}
 Finally, using (\ref{424}), this gives the Adams-Riemann-Roch identity
 \begin{equation}\label{ARR6}
\psi^\l(f^\ct_*(\F))=f^\wedge_*(\al \otimes \psi^\l(\F))
\end{equation}
in $\Kr_0(\Zl[C_\l\times G])^\wedge$.
\end{para}

 \begin{para}
Let us now consider $\zeta: \Gr_0(C_\l\times G, Z)'\to \Gr_0(G, Z)'$ given by $\F\mapsto \l\cdot [\F^{C_\l}]-[\F]$
(\emph{cf.}  \ref{413para}).
This is an $\scrR(G)=\Gr_0(\Z'[G])'$-module homomorphism.
As in \ref{413para}, we can see that $\zeta$ vanishes on $(v)\cdot \Gr_0(C_\l\times G, Z)'$
using Frobenius reciprocity. Therefore, 
  it also gives 
$$
\zeta^\wedge:  \Gr_0(C_\l\times G, Z)^\wedge\to  \Gr_0(G, Z)^\wedge=\varprojlim\nolimits_n \Gr_0(G, Z)'/I_G^n\cdot \Gr_0(G, Z)'.
$$
\begin{thm} Under the above assumptions, we have
\begin{equation}\label{ARRzeta}
(\l-1)\psi^\l(f^\ct_*(\F))=f^\wedge_*(\zeta^\wedge(\al)\otimes \psi^\l(\F))
\end{equation}
in   
$
\Kr_0(\Zl[G])^\wedge=\varprojlim\nolimits_n\Kr_0(\Zl[G])'/I^n_G\cdot \Kr_0(\Zl[G])'.
$
\end{thm}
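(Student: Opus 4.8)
The plan is to deduce \eqref{ARRzeta} by applying the completed homomorphism $\zeta^\wedge$ to both sides of the Adams--Riemann--Roch identity \eqref{ARR6}. First recall how \eqref{ARR6} is to be read: on its left-hand side $\psi^\l(f^\ct_*(\F))$ denotes the inflation along $\xi$ of the element $\psi^\l(f^\ct_*(\F))\in\Kr_0(\Zl[G])$, regarded in $\Kr_0(\Zl[C_\l\times G])^\wedge$ (this is how Proposition~\ref{Adams} is applied, and the ambiguity modulo $(v)$ disappears in the completion since $v=0$ there). The inflation $\xi$ is $\scrR(G)$-linear and carries $I_G$ into the ideal $I_G\scrR(C_\l\times G)$, hence is continuous and induces $\xi^\wedge\colon\Kr_0(\Zl[G])^\wedge\to\Kr_0(\Zl[C_\l\times G])^\wedge$; likewise $\zeta$, being $\scrR(G)$-linear and vanishing on $(v)\cdot\Kr_0(\Zl[C_\l\times G])$ by \ref{413para}, satisfies $\zeta\bigl((I_G^n+v)\cdot{-}\bigr)\subset I_G^n\cdot{-}$ and so induces $\zeta^\wedge$ the other way. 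Passing \eqref{zeta} to completions gives $\zeta^\wedge\circ\xi^\wedge=(\l-1)\cdot\mathrm{id}$, so applying $\zeta^\wedge$ to the left-hand side of \eqref{ARR6} produces precisely $(\l-1)\psi^\l(f^\ct_*(\F))$ in $\Kr_0(\Zl[G])^\wedge$.

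It then remains to identify $\zeta^\wedge$ applied to the right-hand side of \eqref{ARR6} with $f^\wedge_*(\zeta^\wedge(\al)\otimes\psi^\l(\F))$, and for this I would prove two compatibilities at the level of the primed Grothendieck groups and then pass to completions. The first is that $\zeta$ commutes with the equivariant Euler characteristic, i.e.\ $\zeta\circ f_*=f_*\circ\zeta$ as maps $\Gr_0(C_\l\times G,X')'\to\Kr_0(\Zl[G])'$: since $\l$ is invertible, the functor $N\mapsto N^{C_\l}$ is exact, preserves projectivity (it is a direct summand functor by \ref{413para}), commutes with finite products and hence with the \v Cech complex $C^\bullet(\mathcal U,-)$, and preserves quasi-isomorphisms; thus, for the perfect complex $M^\bullet$ of $\Zl[C_\l\times G]$-modules constructed in \ref{construction}, the complex $(M^\bullet)^{C_\l}$ is a perfect complex of $\Zl[G]$-modules computing $f^\ct_*(\F^{C_\l})$, which gives $\l\,[(f_*\F)^{C_\l}]-[f_*\F]=f_*(\l[\F^{C_\l}]-[\F])$, that is $\zeta(f_*\F)=f_*(\zeta\F)$ (over $\Spec(\Z)$ the identification of a sheaf class with its module of global sections is manifestly $C_\l$-equivariant, so the last step is formal). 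The second compatibility is a projection formula for $\zeta$ with respect to inflation: for $a\in\Gr_0(C_\l\times G,X')'$ and $b\in\Gr_0(G,X')'$ one has $\zeta(a\otimes b)=\zeta(a)\otimes b$, since $(N\otimes M)^{C_\l}=N^{C_\l}\otimes M$ whenever $C_\l$ acts trivially on $M$. Applying the first with the class $\al\otimes\psi^\l(\F)$ and the second with $a=\al$, $b=\psi^\l(\F)$ — and using that $f^\wedge_*$ and $\zeta^\wedge$ are the completions of $\Gr_0(\Z'[G])'$-module homomorphisms — yields $\zeta^\wedge\bigl(f^\wedge_*(\al\otimes\psi^\l(\F))\bigr)=f^\wedge_*\bigl(\zeta^\wedge(\al)\otimes\psi^\l(\F)\bigr)$, and combining this with the previous paragraph gives \eqref{ARRzeta}.

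The routine bookkeeping lies in checking that every map occurring descends to the relevant completions and that the two identities above, established for the primed groups, remain valid after completion; this is immediate from $\scrR(G)$-linearity together with the inclusions noted above. I expect the one point needing genuine care to be the commutation $\zeta\circ f_*=f_*\circ\zeta$: one must verify that forming $C_\l$-invariants is compatible with \emph{every} step of the construction of the projective equivariant Euler characteristic — the choice of affine cover, the replacement of the \v Cech complex by a quasi-isomorphic perfect complex, and the independence of all choices — rather than merely with cohomology. Exactness of $(-)^{C_\l}$ makes each of these compatibilities hold, but this is the place where the argument is least formal and where, in the write-up, the details of \ref{construction} have to be invoked carefully.
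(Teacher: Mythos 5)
Your proposal is correct and takes essentially the same approach as the paper's two-line proof (apply $\zeta^\wedge$ to both sides of (\ref{ARR6}) and invoke (\ref{zeta})); you additionally spell out the two compatibilities the paper leaves implicit — that $\zeta$ intertwines $f^\ct_*$ with itself (via exactness of $(-)^{C_\l}$ through the \v Cech construction of \ref{construction}) and the projection formula $\zeta(a\otimes\xi(b))=\zeta(a)\otimes b$ — both of which are right.
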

\begin{proof}
Apply to the identity (\ref{ARR6}) the natural map induced on the completions by the map $\zeta$ of \ref{413para}.
The result then follows by using (\ref{zeta}).
 \end{proof}

 Under some additional assumptions, we will
see in the next section that $\al=L_\bullet(1)$ is given as the inverse of a Bott element.
This justifies calling (\ref{ARR6}), (\ref{ARRzeta}),   Adams-Riemann-Roch identities. 

\end{para}
 
\subsection{Localization and Bott classes}

\begin{para} We return to the more general set-up of \S \ref{cyclicloc}.
Suppose in addition that $f: Z\to \Spec(\Zl)$ is a local complete intersection. Then, we can find a $\Zl[G]$-lattice
$E$   such that $f$ factors $G$-equivariantly $Z \hookrightarrow  {\mathbb P}(E)\to \Spec(\Zl)$,
where $P={\mathbb P}(E)$ is the projective space with linear $G$-action determined by $E$.
In this,  the first morphism $i: Z \rightarrow  {\mathbb P}(E)$ is a closed immersion,
and the conormal sheaf ${\mathscr N}_{Z|P}:={\mathscr I}_Z/{\mathscr I}^2_Z$ is a $G$-equivariant sheaf
locally free over $Z$ of rank equal to the codimension $c$ of $Z$ in $P$
(see \cite[\S 3]{KockGRR}). 
By definition, the cotangent element of $Z$ is $T^\vee_Z:=[i^*\Omega^1_{P}]-[{\mathscr N}_{Z|P}]$
in $\Kr_0(G, Z)$; it is independent of the choice of such an embedding.
The following result appears in \cite{KockRRtensor}  if $Z$ is smooth and $G$ acts trivially on $Z$; it is inspired by an observation of M.~Nori \cite{NoriRR}.
(The case that $Z$ is smooth and $G$ acts trivially is enough for the proof of our main result 
in the unramified case, Theorem \ref{main1}.)

\begin{thm}\label{Bottthm}
Suppose that $Z\to\Spec(\Zl)$ is a projective scheme with $G$-action
which is a local complete intersection. Then the element $\al=L_\bullet(1)\in \Gr_0(C_\l\times G, Z)^\wedge$ is the image of the inverse  $\theta^{\l}(T^\vee_Z)^{-1} \in \Kr_0(G, Z)^\wedge$ of   the Bott class 
 under the natural homomorphism $\Kr_0(G, Z)^\wedge\to \Gr_0(C_\l\times G, Z)^\wedge$.
\end{thm}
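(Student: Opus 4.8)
The plan is to reduce the general local complete intersection case to the smooth case already discussed in \S\ref{nonsmoothlambda} via the deformation-to-the-normal-cone / embedding trick, following \cite{BFQ, Quart, SouleBook}. First I would fix the $G$-equivariant closed immersion $i\colon Z\hookrightarrow P={\mathbb P}(E)$, note that $P^\l$ is smooth over $\Spec(\Zl)$, and use the description of $L_\bullet$ in \ref{nonsmoothlambda}: for $x\in\Gr_0(C_\l\times G,Z^\l)$ one resolves $(i^\l)_*(x)$ by a bounded complex $\E^\bullet(x)$ of $C_\l\times G$-equivariant locally free sheaves on $P^\l$ exact off $Z^\l$, restricts to $P=\Delta(P)$, takes homology supported on $Z$, and multiplies by $\lambda_{-1}({\mathscr N}_{P|P^\l})^{-1}_{|Z}$; formula (\ref{5.2.12}) gives $L_\bullet(x)$. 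Applying this with $x=1=[\O_{Z^\l}]$, I must therefore identify the class $[h(\E^\bullet(1)_{|P})]\in\Gr_0(C_\l\times G,Z)$ obtained from a resolution of $(i^\l)_*\O_{Z^\l}$ on $P^\l$, and then combine it with $\lambda_{-1}({\mathscr N}_{P|P^\l})^{-1}_{|Z}$.

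The key computation is the identification of the ``excess'' contribution coming from the codimension of $Z$ inside $P$. On the smooth space $P^\l$ the fixed locus of $C_\l$ is the diagonal $\Delta(P)=P$, so by the self-intersection/Lefschetz-Riemann-Roch analysis the operator carries $1$ to the image of $\lambda_{-1}({\mathscr N}_{P|P^\l})^{-1}$, which by \cite{KockRRtensor, NoriRR} is the inverse Bott class $\theta^\l(T^\vee_P)^{-1}=\theta^\l(\Omega^1_{P})^{-1}$ (using ${\mathscr N}_{P|P^\l}\cong \Omega^1_P\otimes(\text{reg})$-type identification and the computation of $\lambda_{-1}$ of a regular-representation-twisted bundle in terms of $\theta^\l$, exactly as in the $Z$-smooth, $G$-trivial case). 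Restricting to $Z$ and using $i^*\Omega^1_P$ versus the conormal sheaf ${\mathscr N}_{Z|P}$, the defining relation $T^\vee_Z=[i^*\Omega^1_P]-[{\mathscr N}_{Z|P}]$ together with the multiplicativity $\theta^\l(x+y)=\theta^\l(x)\theta^\l(y)$ of the Bott class lets me rewrite $\theta^\l(i^*\Omega^1_P)^{-1}$ as $\theta^\l(T^\vee_Z)^{-1}\cdot\theta^\l({\mathscr N}_{Z|P})^{-1}$. The remaining point is that the factor $[h(\E^\bullet(1)_{|P})]$ computed from the Koszul resolution of $\O_{Z^\l}$ along $Z^\l\subset P^\l$, when restricted to $\Delta(P)$ and pushed to $Z$, exactly contributes the compensating factor $\theta^\l({\mathscr N}_{Z|P})$ — this is the equivariant self-intersection formula for the regular immersion $i^\l$ combined with the fact that ${\mathscr N}_{Z^\l|P^\l}{|_Z}$ is, as a $C_\l\times G$-sheaf, the induced/regular-representation twist of ${\mathscr N}_{Z|P}$, so that its $\lambda_{-1}$ is governed by $\theta^\l({\mathscr N}_{Z|P})$ after completion. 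Hence the two spurious factors cancel and $\al$ is the image of $\theta^\l(T^\vee_Z)^{-1}$.

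Concretely the steps are: (1) record that $\theta^\l$ is multiplicative and invertible in the completed $\Kr$-group, and that for a bundle $V$ on a scheme with trivial $C_\l$-action the class $\lambda_{-1}(V\boxtimes\cdots)|_{\Delta}$, i.e.\ the $C_\l$-equivariant $\lambda_{-1}$ of the conormal bundle of the diagonal, equals (the image of) $\theta^\l(V)$ up to a unit killed in the completion — this is the $G$-equivariant version of \cite[Prop.]{KockRRtensor} and is checked by the equivariant splitting principle reducing to line bundles and the elementary identity $\prod_{j=1}^{\l-1}(1-\zeta^j L)$ evaluated via the structure of $\scrR(C_\l)^\wedge$; (2) apply \ref{nonsmoothlambda} with $x=1$ to express $\al=[h(\E^\bullet(1)_{|P})]\cdot\lambda_{-1}({\mathscr N}_{P|P^\l})^{-1}_{|Z}$; (3) evaluate the second factor by step (1) applied to $P$, getting $\theta^\l(i^*\Omega^1_P)^{-1}$ after restriction to $Z$; (4) evaluate $[h(\E^\bullet(1)_{|P})]$ using the Koszul resolution of $\O_{Z^\l}$ on $P^\l$ and the equivariant self-intersection formula, identifying it with $\theta^\l({\mathscr N}_{Z|P})$ (again via step (1)-type computation on the conormal bundle); (5) multiply and use $T^\vee_Z=[i^*\Omega^1_P]-[{\mathscr N}_{Z|P}]$ and multiplicativity of $\theta^\l$ to get $\al=\theta^\l(T^\vee_Z)^{-1}$; (6) check independence of the embedding, which follows formally since both $\al$ (intrinsically defined via $L_\bullet$) and $T^\vee_Z$ are embedding-independent.

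The main obstacle is step (4): making the equivariant self-intersection / excess-intersection formula precise in this singular, $G$-equivariant, $\l$-fold-product setting and over $\Spec(\Zl)$ rather than over a field. One has to be careful that the Koszul complex of $Z^\l\subset P^\l$ restricted to the diagonal $\Delta(P)$ is a complex exact off $Z$ whose homology class in $\Gr_0(C_\l\times G,Z)$, after the twist by $\lambda_{-1}$ of the ambient conormal bundle and passage to the completion, is controlled purely by the $C_\l$-representation structure on ${\mathscr N}_{Z|P}$ (the regular representation of $C_\l$ tensored with ${\mathscr N}_{Z|P}$), so that all contributions from the non-trivial characters assemble into $\theta^\l({\mathscr N}_{Z|P})$ while the trivial-character part is the excess term that gets absorbed. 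This is exactly the content of the proof of \cite[Lemma 1]{Quart}, so I would invoke that argument verbatim after checking its hypotheses hold here; the only genuinely new bookkeeping is carrying the $G$-action along, which is harmless since $G$ acts trivially on $C_\l$ and commutes with everything, and working in the $I_G$-adic completion, where all the localization/concentration isomorphisms used are already available from \S\ref{cyclicloc}.
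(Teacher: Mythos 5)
Your proposal follows essentially the same route as the paper's proof: embed $Z\hookrightarrow P$, use the description from \S\ref{nonsmoothlambda} to write $\al=[h(\E^\bullet(1)_{|P})]\cdot\lambda_{-1}({\mathscr N}_{P|P^\l})^{-1}_{|Z}$, identify the ambient factor with $\theta^\l(i^*\Omega^1_P)^{-1}$ via the isomorphism $\Omega^1_P\otimes\alpha\cong{\mathscr N}_{P|P^\l}$ and the relation $\theta^\l(\F)=\lambda_{-1}(\F\cdot\alpha)$ modulo $(v)$, then identify the excess factor with $\theta^\l({\mathscr N}_{Z|P})$ and conclude by multiplicativity and $T^\vee_Z=[i^*\Omega^1_P]-[{\mathscr N}_{Z|P}]$. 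Steps (1)--(3), (5), (6) of your outline reproduce the paper's argument almost line by line.

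The one place you go astray is the technical justification of your step (4). You hope to extract $[h(\E^\bullet(1)_{|P})]=\theta^\l({\mathscr N}_{Z|P})$ from an ``equivariant self-intersection/excess-intersection formula'' and say you would invoke \cite[Lemma 1]{Quart} verbatim. But the paper uses \cite[Lemma 1]{Quart} earlier, in \S\ref{nonsmoothlambda}, only to establish the decomposition formula $(\ref{5.2.12})$ defining $L_\bullet$ via a resolution-and-restriction recipe; it does not compute the homology $h((\E^\bullet)^{\otimes\l})$. The actual input the paper uses for step (4) is K\"ock's explicit computation \cite[Theorem 5.1]{KockTAMS}, which gives canonical $C_\l\times G$-isomorphisms ${\rm H}^j\bigl((\E^\bullet)^{\otimes\l}\bigr)\cong\wedge^j({\mathscr N}_{Z|P}\otimes\alpha)$ for \emph{any} $G$-equivariant locally free resolution $\E^\bullet$ of $i_*\O_Z$ (not just a Koszul complex, which need not exist globally), whence $[h((\E^\bullet)^{\otimes\l})]=\lambda_{-1}({\mathscr N}_{Z|P}\cdot\alpha)=\theta^\l({\mathscr N}_{Z|P})$ in the relevant quotient. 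Also, a small bookkeeping slip: you describe the excess bundle as ``the regular-representation twist of ${\mathscr N}_{Z|P}$'' with the ``trivial-character part'' as the excess --- it is the other way around. One has ${\mathscr N}_{Z^\l|P^\l}|_Z\cong{\mathscr N}_{Z|P}\otimes\Zl[C_\l]$, the trivial-character summand ${\mathscr N}_{Z|P}$ is the \emph{expected} normal direction ${\mathscr N}_{Z|P}$, and the excess bundle is the augmentation-ideal part ${\mathscr N}_{Z|P}\otimes\alpha$; it is $\lambda_{-1}$ of this augmentation-ideal twist, not of the full regular-representation twist, that gives $\theta^\l({\mathscr N}_{Z|P})$. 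With these two corrections --- replace the appeal to \cite{Quart} in step (4) by \cite[Theorem 5.1]{KockTAMS}, and take $\alpha$ rather than $\Zl[C_\l]$ as the twist --- your argument coincides with the paper's.
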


Here the inverse of the Bott class $\theta^{\l}(T^\vee_Z)^{-1}=\theta^{\l}(i^*\Omega^1_{P})^{-1}\cdot \theta^{\l}({\mathscr N}_{Z|P})$
is defined   in the $I_G$-adic completion $\Kr_0(G, Z)^\wedge$ as in \cite[p. 432]{KockGRR}. As is remarked in \emph{loc. cit.}, 
if $G$ acts trivially on $Z$, then the completion is not needed: The Bott class 
$\theta^{\l}(T^\vee_Z)$ is then defined and is invertible in 
$\Kr_0(Z)'=\Kr_0(Z)[\l^{-1}]$.

\begin{proof}
 Starting with $i: Z\hookrightarrow P$ as above, we obtain a similar embedding of the fibered product 
$
i^\l: Z^{\l}\hookrightarrow P^{\l}
$
which also makes $Z^{\l}$ a local complete intersection in the smooth $P^{\l}$.
We now use \ref{nonsmoothlambda} to calculate $\al=L_\bullet(1)$.
Since $P\to \Spec(\Zl)$ is smooth and projective, we can find a $G$-equivariant  resolution $\E^\bullet \to i_*\O_Z$ 
of $i_*\O_Z$ by a bounded complex of $G$-equivariant locally free coherent $\O_P$-sheaves on $P$. Then, the total 
exterior tensor product $(\E^\bullet)^{\boxtimes \l}=\otimes_{j=1}^{\l} p^*_j\E^\bullet $ with its natural $C_\l$-action 
(with rule of signs as in \ref{tensorpar}) provides 
a $C_\l\times G$-equivariant locally free resolution of the $C_\l\times G$-coherent sheaf
  $(i^\l)_*(\O_{Z^{\l}})\simeq (i_*\O_Z)^{\boxtimes \l}=\otimes_{j=1}^{\l} p^*_j (i_*\O_Z)$  on $P^{\l}$.  
Let us restrict to $P$, \emph{i.e.} pull back the complex $(\E^\bullet)^{\boxtimes \l}$ via the diagonal 
  $\Delta_P: P\hookrightarrow P^{\l}$. We obtain the total $\l$-th tensor product 
  \begin{equation}
 ( \E^\bullet)^{\otimes \l}=(\E^\bullet)^{\boxtimes \l}|_P=\Delta^*_P((\E^\bullet)^{\boxtimes \l})
  \end{equation}
  with its natural $C_\l\times G$-action.
  Since $\E^\bullet$ is exact off $Z$, so is $(\E^\bullet)^{\otimes \l}$;
  consider the element $[h( (\E^\bullet)^{\otimes \l})]$ of $\Gr_0(C_\l\times G, Z)$ obtained from its total homology:
  \begin{equation}
 [h( (\E^\bullet)^{\otimes \l})]:=\sum\nolimits_{j} (-1)^j [{\rm H}^j((\E^\bullet)^{\otimes \l})]
  \end{equation}
  as in \cite{SouleBook}.
  By  \ref{nonsmoothlambda}, we have
  \begin{equation}\label{545}
  \al=L_\bullet (1)= [h( (\E^\bullet)^{\otimes \l})]\cdot \lambda_{-1}({\mathscr N}_{P|P^{\l}})_{|Z}^{-1}.
  \end{equation}
  We now use two results of K\"ock. By \cite[Theorem 5.1]{KockTAMS}, we have 
  canonical $C_\l\times G$-isomorphisms $ {\rm H}^j((\E^\bullet)^{\otimes \l})\simeq 
  \wedge ^j ({\mathscr N}_{Z|P}\otimes \alpha)$, where $\alpha$ again is the augmentation ideal. Therefore, 
 \begin{equation}\label{546}
  [h( (\E^\bullet)^{\otimes \l})]=\sum\nolimits_j (-1)^j [\wedge ^j ({\mathscr N}_{Z|P}\cdot \alpha)]=\lambda_{-1}({\mathscr N}_{Z|P}\cdot \alpha).
 \end{equation}
On the other hand,   \cite[Lemma 3.5]{KockRRtensor}, gives a $C_\l$-isomorphism
$$ \Omega^1_{P}\otimes \alpha\xrightarrow{\sim} {\mathscr N}_{P|P^{\l}}$$ which, as we can easily see, is also $G$-equivariant.
Therefore,
$$
 \lambda_{-1}({\mathscr N}_{P|P^{\l}})_{|Z}=\lambda_{-1}(i^*\Omega^1_{P}\cdot \alpha)
$$
in $\Kr_0(C_\l\times G, Z)$.

Now, as in \cite[Prop. 3.2]{KockRRtensor}, we see using the $G$-equivariant splitting principle (\emph{e.g.} 
\cite{KockGRR}), that if $\F$ is a $G$-equivariant locally free coherent 
sheaf over $Z$,
then 
$$
\theta^{\l}(\F)=\lambda_{-1}(\F\cdot \alpha)
$$
in $\Kr_0(C_\l\times G, Z)'/(v)\cdot 
\Kr_0(C_\l\times G, Z)'$.

Combining (\ref{545}), (\ref{546}), and the above, we obtain
\begin{eqnarray*}
\al=L_\bullet (1)= [h( (\E^\bullet)^{\otimes \l})]\cdot \lambda_{-1}({\mathscr N}_{P|P^{\l}})_{|Z}^{-1}=\ 
\ \ \ \ \ \ \ \ \ \ \\
\ \ \ \ \ \ \ \ \ =\lambda_{-1}({\mathscr N}_{Z|P}\cdot \alpha)\cdot \lambda_{-1}(i^*\Omega^1_P\cdot \alpha)^{-1}=
\theta^{\l}({\mathscr N}_{Z|P})\cdot \theta^{\l}(i^*\Omega^1_{P})^{-1}\notag
\end{eqnarray*}
and therefore
$$
\al=L_\bullet(1)=\theta^{\l}(T^\vee_Z)^{-1}
$$
in $\Gr_0(C_\l\times G, Z)^\wedge$. This   completes the proof.
\end{proof}
 \end{para}

\subsection{A general Adams-Riemann-Roch formula}\label{generalARRsec}

\begin{para}
In this section, we assume that $G$ acts tamely on $f: X\to \Spec(\Z)$ which is always projective and flat of relative dimension $d$. 
We also assume that $f$ is a local complete intersection and that $\pi: X\to Y=X/G$ is flat.
\end{para}

\begin{para} We now see that our results imply:
\begin{thm}\label{ARRtame}
Let $\F$ be a $G$-equivariant coherent locally free $\O_X$-module. 
Under the above assumptions on $X$, if $\l$ is a prime with $(\l, \#G)=1$ and $\l'$
is another prime with $\l\l'\equiv 1\mod {\rm exp}(G)$, we have
\begin{eqnarray}\label{ARRtameeq}
(\l-1)\cdot \psi^\l(\overline\chi(X, \F))=\ \ \ \ \ \ \ \ \ \ \ \ \ \ \ \ \ \ \ \ \ \ \ \  \ \ \ \ \ \ \ \ \ \ \\
\ \ \ \ \ \ \ \ \ \  =\l(\l-1)\cdot \psi^{\CNT}_{\l'} (\overline\chi(X, \F))
= (\l-1)\cdot f^\wedge_*(\theta^{\l}(T^\vee_X)^{-1}\otimes  \psi^\l(\F))\notag
\end{eqnarray}
in $\Cl(\Zl[G])^\wedge$. Here $\theta^{\l}(T^\vee_X)^{-1}$ belongs to $\Kr_0(G, X)^\wedge$.
\end{thm}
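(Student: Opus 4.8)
The plan is to combine three facts already in hand, applied with $Z=X'=X\times_{\Spec(\Z)}\Spec(\Zl)$: since $f$ is a projective flat local complete intersection, so is $X'\to\Spec(\Zl)$, so both the set‑up of \S\ref{generalARRsec} and the hypotheses of Theorem \ref{Bottthm} apply to $Z=X'$. The starting point is the identity (\ref{ARRzeta}),
\[
(\l-1)\,\psi^\l(f^\ct_*(\F))=f^\wedge_*\big(\zeta^\wedge(\al)\otimes\psi^\l(\F)\big)\qquad\text{in } \Kr_0(\Zl[G])^\wedge,
\]
with $\al=L_\bullet(1)$. It then remains to compute $\zeta^\wedge(\al)$ and to compare $\psi^\l$ with $\psi^\CNT_{\l'}$ on the classgroup.

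First I would evaluate $\al=L_\bullet(1)\in\Gr_0(C_\l\times G,X')^\wedge$. By Theorem \ref{Bottthm} (applicable since $X'$ is a projective lci over $\Spec(\Zl)$), $\al$ is the image under inflation of $\theta^\l(T^\vee_{X'})^{-1}\in\Kr_0(G,X')^\wedge$; here $T^\vee_{X'}$ is the base change of the cotangent element $T^\vee_X$, so $\theta^\l(T^\vee_{X'})^{-1}$ is the image of $\theta^\l(T^\vee_X)^{-1}$. The crucial point is that $\al$ is thereby represented by a class with the \emph{trivial} $C_\l$-action: writing $\xi$ for inflation of $G$-equivariant classes on $X'$ to $C_\l\times G$-equivariant ones, $\al=\xi\big(\theta^\l(T^\vee_X)^{-1}\big)$. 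The projection formula $\zeta^\wedge\big(\xi(\beta)\otimes M\big)=\beta\otimes\zeta^\wedge(M)$ holds by the Frobenius-reciprocity computation of \ref{413para} (using $(\xi(\beta)\otimes M)^{C_\l}=\beta\otimes M^{C_\l}$), and in particular $\zeta^\wedge\circ\xi=(\l-1)\cdot\mathrm{id}$, just as in (\ref{zeta}). Hence $\zeta^\wedge(\al)=(\l-1)\,\theta^\l(T^\vee_X)^{-1}$, and substituting into (\ref{ARRzeta}) and using additivity of $f^\wedge_*$ gives
\[
(\l-1)\,\psi^\l(f^\ct_*(\F))=(\l-1)\,f^\wedge_*\big(\theta^\l(T^\vee_X)^{-1}\otimes\psi^\l(\F)\big)\qquad\text{in }\Kr_0(\Zl[G])^\wedge.
\]
Since $\psi^\l$ preserves ranks and sends free classes to free classes (Proposition \ref{adamsonfree}), it descends to $\Cl(\Zl[G])$ and is compatible with the projection $\Kr_0(\Zl[G])^\wedge\to\Cl(\Zl[G])^\wedge$, under which $f^\ct_*(\F)=\chi(X,\F)$ maps to $\ov\chi(X,\F)$; this gives the equality of the first and third expressions of (\ref{ARRtameeq}).

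For the middle expression I would apply Proposition \ref{KockvsCNT} with $x=\chi(X,\F)\in\Kr_0(\Zl[G])$. Setting $x_0=[\Zl[G]]$, the element $x-r(x)x_0$ has rank $0$, hence lies in the rank-zero subgroup of $\Kr_0(\Zl[G])$, which maps isomorphically onto $\Cl(\Zl[G])=\Kr_0(\Zl[G])/\langle\Zl[G]\rangle$ and carries $x-r(x)x_0$ to $\ov\chi(X,\F)$. Transporting Proposition \ref{KockvsCNT} through this isomorphism gives $\psi^\l(\ov\chi(X,\F))=\l\cdot\psi^\CNT_{\l'}(\ov\chi(X,\F))$ in $\Cl(\Zl[G])$; multiplying by $\l-1$ and mapping into the completion gives $(\l-1)\,\psi^\l(\ov\chi(X,\F))=\l(\l-1)\,\psi^\CNT_{\l'}(\ov\chi(X,\F))$ in $\Cl(\Zl[G])^\wedge$. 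Combined with the previous step, all three expressions of (\ref{ARRtameeq}) agree. (The factor $\l-1$ is retained throughout because $\zeta^\wedge\circ\xi=(\l-1)\cdot\mathrm{id}$ offers no way to cancel it in the possibly $(\l-1)$-torsion group $\Cl(\Zl[G])^\wedge$.)

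The only genuinely non-formal step is the identification $\zeta^\wedge(\al)=(\l-1)\,\theta^\l(T^\vee_X)^{-1}$: everything hinges on Theorem \ref{Bottthm}, that is, on knowing that $L_\bullet(1)$ really is the image of an inflated, trivial-$C_\l$ class — which is exactly what the $C_\l\times G$-equivariant identifications of conormal modules in its proof (${\rm H}^j((\E^\bullet)^{\otimes\l})\cong\wedge^j(\mathscr{N}_{X'|P}\otimes\alpha)$ and $\Omega^1_P\otimes\alpha\cong\mathscr{N}_{P|P^\l}$) deliver once the $\alpha$'s are absorbed into Bott classes. The remaining points are routine: the maps $\psi^\l$, $\psi^\CNT_{\l'}$, $f^\wedge_*$, $\zeta^\wedge$ are $\scrR(G)$-linear, hence compatible with the $I_G$-adic completions and with $\Kr_0\to\Cl$, and $\zeta^\wedge$ commutes with tensoring by the trivial-$C_\l$ class $\psi^\l(\F)$.
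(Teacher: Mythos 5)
Your proof is correct and follows essentially the same route as the paper: evaluate $\al=L_\bullet(1)$ via Theorem \ref{Bottthm}, note it is an inflated class so $\zeta^\wedge(\al)=(\l-1)\,\theta^\l(T^\vee_X)^{-1}$ by (\ref{zeta}), substitute into (\ref{ARRzeta}), descend to $\Cl(\Zl[G])^\wedge$ via Proposition \ref{adamsonfree}, and invoke Proposition \ref{KockvsCNT} for the CNT comparison. You have merely expanded the paper's terse one-line argument with the intermediate verifications it leaves implicit.
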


\begin{proof} In this, we also denote by $ \psi^\l$ the action 
of this operator on the completion
$\Cl(\Zl[G])^\wedge$ of the quotient $\Cl(\Zl[G])'=\Kr_0(\Zl[G])'/\langle [\Zl[G]]\rangle$; this action is well-defined  by Proposition \ref{adamsonfree} and \cite[Prop. 2.10]{KockAdamsCamb}. The statement then follows from the Adams-Riemann-Roch identity (\ref{ARRzeta}) and Theorem \ref{Bottthm}
by using also $\zeta^\wedge(\al)=\zeta^\wedge(\theta^{\l}(T^\vee_X)^{-1})=(\l-1)\cdot\theta^{\l}(T^\vee_X)^{-1}$ as in (\ref{zeta}) and Proposition \ref{KockvsCNT}.
\end{proof}
\end{para}

\quash{
\subsubsection{} Suppose $G$ is a $p$-group for an odd prime $p$, and $\l\neq p$. We claim that, in this case,  the $I_G$-adic completion
$\Cl(\Zl[G])^\wedge$ is the $p$-power part $\Cl(\Zl[G])_p$: Recall that $\Cl(\Zl[G])$
is a finite $\scrR(G)$-module. We observe that the class  $[\Zl[G]]\in \scrR(G)$ annihilates $\Cl(\Zl[G])$ but
since it has rank $\#G$, it is also invertible in the localizations 
of $\scrR(G)$ at  $\rho=I_G+(q)$, for all $q\neq p$.
Hence, the completion $\Cl(\Zl[G])^\wedge$ is supported at $p$ and the claim follows
since, by \ref{idealspara}, the only  prime ideal of $\scrR(G)$ supported 
over $p$ is $I_G+(p)$.
(cf. the proof of \cite[Proposition 4.5]{PaCubeInvent}). Therefore, 
in this case, by Proposition \ref{cft} and the above, there is an infinite set of primes $\l$,
such that $\l\mod p\in (\Z/p\Z)^*$ is a generator and $\l^{p-1}\equiv 1\mod {\rm exp}(G)$,
with
\begin{equation}
\psi^{\CNT}_\l(\overline\chi(X, \F))=\l'\cdot f^\wedge_*(\theta^{\l'}(T^\vee_X)^{-1}\otimes \psi^{\l'}(\F))
\end{equation}
in the $p$-power part $\Cl(\Z[G])_p$, provided $\l'$ is another prime with $\l \l'$ sufficiently congruent to 
$1$ modulo powers of $p$.
\end{para}}

\begin{para}\label{Cotangent} The result in this paragraph will be used  in \S \ref{curvesSec}.
Suppose in addition that $Y\to \Spec(\Z)$ is regular (then the condition that $\pi$ is   flat
is implied by the rest of our assumptions, see \emph{e.g.} \cite[(18.H)]{Matsumura}). 
Recall we have classes $T^\vee_X$, $\pi^*T^\vee_Y$ 
and $T^\vee_{X/Y}:=T^\vee_X-\pi^*T^\vee_Y$ 
in the Grothendieck group $\Kr_0(G, X)$.
Notice that $\theta^{\l}(T^\vee_Y)$, $\theta^{\l}(T^\vee_Y)^{-1}$ are defined in $\Kr_0(Y)'$
and $\theta^{\l}(T^\vee_X)^{-1}$ in $\Kr_0(G, X)^\wedge$
and we have 
$$
\theta^{\l}(T^\vee_{X/Y})^{-1}=\theta^{\l}(T^\vee_X)^{-1} \pi^*(\theta^{\l}(T^\vee_Y) )
$$
in $\Kr_0(G, X)^\wedge$. Then also
\begin{equation}\label{thetaXY}
\theta^{\l}(T^\vee_X)^{-1}=\theta^{\l}(T^\vee_{X/Y})^{-1}\pi^*(\theta^{\l}(T^\vee_Y))^{-1} 
\end{equation}
in $\Kr_0(G, X)^\wedge$.
We can now write
$$
\theta^{\l}( T^\vee_Y)^{-1}=\l^{-d}+ c_\l, \qquad \theta^{\l}( T^\vee_{X/Y})^{-1}=1+r^{\l}_{X/Y}
$$
with  $c_\l\in \Kr_0(Y)'=\Gr_0(Y)'$
supported on a 
proper closed subset of $Y$ and $r^{\l}_{X/Y}\in  \Kr_0(G, X)^\wedge$.
(Here  $1= [\O_X]$.)
From Theorem \ref{ARRtame} and (\ref{thetaXY}) we obtain
\begin{equation}
 (\l-1)\cdot \psi^\l(\overline\chi(X,\O_X))=(\l-1)\cdot f^\wedge_*(  (1+r^{\l}_{X/Y})\cdot  (\l^{-d}+\pi^* c_\l) ).
\end{equation}
 This gives the identity
\begin{eqnarray}\label{723}
\ \ \ \ (\l-1)(\psi^\l-\l^{-d})\cdot(\overline\chi(X,\O_X))=\ \ \ \ \ \ \  \ \ \ \ \ \ \ \ \ \ \ \ \ \ \ \ \ \ \ \ \ \\
\ \ \ \ \ \ \ \ \ \ \ \ \ \ \ \ \ =\l^{-d}(\l-1) f^\wedge_*(r_{X/Y}^{\l})+(\l-1)f^\wedge_*(\pi^*(c_\l)\cdot \theta^{\l}(T^\vee_{X/Y})^{-1})\notag
\end{eqnarray}
in $\Cl(\Zl[G])^\wedge$.  
 In this situation, since $X$ and $Y$ are both local complete intersections over $\Z$, the cotangent complexes
$L_{X/\Z}$, $L_{Y/\Z}$, and hence $\pi^*L_{Y/\Z}$, are all perfect of $\O_X$-tor amplitude in $[-1, 0]$.
(Here $L_{X/\Z}$, $\pi^*L_{Y/\Z}$, are complexes of $G$-equivariant $\O_X$-modules;
see \cite{IllusieCotangent}
for the definition and properties of the cotangent complex.)
If $i: Y\hookrightarrow P$ is an embedding in a smooth scheme as before, then $i$ is  a regular immersion and there  
 is a quasi-isomorphism $L_{Y/\Z}\simeq [{\mathscr N}_{Y|P}\to i^*\Omega^1_{P/\Z}]$;
similarly for $L_{X/\Z}$ after choosing a $G$-equivariant embedding.
There is a canonical distinguished triangle
$$
\pi^* L_{Y/\Z}\to L_{X/\Z}\to L_{X/Y}\to \pi^*L_{Y/\Z}[1].
$$
The cotangent complex $L_{X/Y}$ is then also perfect and gives the class 
$T^\vee_{X/Y}$ in $\Kr_0(G,X)$.
The morphism  $\pi^* L_{Y/\Z}\to L_{X/\Z}$  is an isomorphism over 
the largest open subscheme $U$
of $X$ such that $\pi: U\to V=U/G$ is \'etale. 
\end{para}

 \section{Unramified covers}

 \subsection{The main identity} 
 
 \begin{para} Here we suppose in addition that $\pi: X\to Y$ is unramified, \emph{i.e.} that the cover $\pi$ is \'etale. 
 In this case, by \'etale descent, pull-back by $\pi$ gives isomorphisms $\pi^*: \Gr_0(Y)\xrightarrow{\sim} \Gr_0(G, X)$,  $\pi^*: \Gr_0(C_\l, Y)\xrightarrow{\sim} \Gr_0(C_\l\times G, X)$. 
 We   can see that, by using such isomorphisms, the map
$$
\Delta_*: \Gr_0(C_\l, Y')\to \Gr_0(C_\l, (Y')^{\l})
$$
 can be identified with the map $\Delta_*$ of \ref{cyclicloc2}. In this case, we can 
show more directly, using localization for the $C_\l$-action on $(Y')^{\l}$, that $\Delta_*$
above
gives an isomorphism after localizing at each maximal ideal $\rho_{(\sigma, (q))}$, for $q\neq \l$,
of $\scrR(C_\l)=\Gr_0(\Zl[C_\l])'$ that contains the element $v$. Set $\scrR(C_\l)^\flat=\scrR(C_\l)/(v)$ and use ${}^\flat$ to denote 
base change via $\scrR(C_\l)\to \scrR(C_\l)^\flat$.
We see that $\Delta_*^\flat$ is an isomorphism and there is $\eta^\ell\in \Gr_0(C_\l, Y')^\flat$
whose pull back by $\pi$ maps to $\al\in \Gr_0(C_\l\times G, X')^\wedge$.
In particular, $\zeta(\eta^\ell)$ makes sense as an element in $\Gr_0(Y')$ and pulls back to an element of $\Gr_0(G, X')$ which is equal to $\zeta^\wedge(\al)$ in $\Gr_0(G, X')^\wedge$. Given the above, the proof of the identity (\ref{ARRzeta}) now 
goes through without having to take completions and we obtain
\begin{equation}\label{thmARR2}
(\l-1)\cdot \psi^\l(f^\ct_*(\O_X))=f^\ct_*(\pi^*\zeta(\eta^\ell))
\end{equation}
in $\Kr_0(\Zl[G])'$.
 \end{para}
 
 \begin{para}
 We will now show, by Noetherian induction, the following result.
 
 \begin{thm}\label{eigenthm}
 Suppose $\pi: X\to Y$ is a $G$-torsor with $Y\to \Spec(\Z)$ projective and flat of relative dimension $d$ and let  $\F$ be a $G$-equivariant coherent  
  $\O_X$-module. Let $\l$ be a prime such that $(\l, \#G)=1$.
 Then
 \begin{equation}\label{rel615}
 (\l-1)^{d+1}\cdot \prod_{i=0}^d (\psi^\l-\l^{-i})\cdot f^\ct_*(\F)=0
 \end{equation}
 in $\Cl(\Z'[G])'=\Cl(\Zl[G])[\l^{-1}]$.
 \end{thm}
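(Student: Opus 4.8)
The plan is to prove this by Noetherian induction on the base $Y$, reducing to the relative dimension of $Y$ over $\Spec(\Z)$ via a d\'evissage argument. First I would observe that by the main identity (\ref{thmARR2}), together with the description of the localization element $\al = L_\bullet(1)$, the operator $\psi^\l$ applied to $f^\ct_*(\F)$ is expressed in terms of a pushforward involving a Bott-class correction. The key is that the ``error term'' $\zeta(\eta^\ell)$ (equivalently $\zeta^\wedge(\al)$), which on a \emph{smooth} $Y$ would be essentially $(\l-1)\theta^\l(T^\vee_Y)^{-1}$ by Theorem \ref{Bottthm}, has a ``leading term'' equal to $(\l-1)\l^{-d}$ times the identity (the term $\l^{-d}$ coming from the relative dimension $d$), plus lower-order terms supported on a proper closed subscheme of $Y$. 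Thus $(\l-1)(\psi^\l - \l^{-d})$ applied to $f^\ct_*(\F)$ is, up to the factor $(\l-1)$, a pushforward of a class supported on a proper closed subscheme $Z \subsetneq Y$.

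The induction step then proceeds as follows. By general d\'evissage in $\Gr$-theory, any coherent sheaf class supported on $Z$ is in the image of $(i_Z)_*$ from $\Gr_0$ of $Z$ (or its reductions/irreducible components), where $\dim(Z) < \dim(Y)$; pulling back the torsor to $Z$ and using functoriality of $f^\ct_*$ and compatibility of the Adams operators with pushforward, I can apply the inductive hypothesis to the cover over each component of $Z$. Since that component has relative dimension $\le d-1$ over $\Spec(\Z)$, the inductive hypothesis gives that $(\l-1)^{d}\prod_{i=0}^{d-1}(\psi^\l - \l^{-i})$ annihilates the corresponding Euler characteristic. Multiplying by the extra factor $(\l-1)(\psi^\l - \l^{-d})$ that we peeled off at the top, and noting we are working in $\Cl(\Zl[G])[\l^{-1}]$ where all these operators act, yields (\ref{rel615}). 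The base case $d = 0$ (so $Y = \Spec(\O_K)$) needs $(\l-1)(\psi^\l - 1)\cdot f^\ct_*(\F) = 0$; here $f^\ct_*(\F)$ is a sum of classes of c.t. modules over a zero-dimensional base, and one checks directly from (\ref{thmARR2}) that the correction term $\zeta(\eta^\ell)$ is just $(\l-1)$ (the Bott class of the trivial cotangent element is $1$), so $(\l-1)\psi^\l(f^\ct_*(\F)) = (\l-1)f^\ct_*(\F)$.

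Two technical points need care. First, the identity (\ref{thmARR2}) is derived under the hypothesis that $\pi$ is \'etale (a $G$-torsor) but $Y$ need \emph{not} be smooth or a local complete intersection over $\Z$; so I cannot directly invoke Theorem \ref{Bottthm} to identify $\al$ with a Bott class. Instead I would use the non-smooth description of $L_\bullet$ from \ref{nonsmoothlambda}: embed $Y$ in a smooth projective $P$ over $\Z'$, resolve, restrict, and extract homology, and track the multiplier $\lambda_{-1}({\mathscr N}_{P|P^\l})^{-1}_{|Y} = \lambda_{-1}(i^*\Omega^1_P \cdot \alpha)^{-1}$. The point is that this multiplier, modulo $(v)$ and after applying $\zeta$, contributes exactly the factor $\l^{-\dim P}$ on the ``ambient'' part, while the homology-of-resolution part contributes a Bott-type class of a perfect complex whose rank-$0$ truncation is supported on the non-lci locus together with the complementary-dimension part; the net ``leading coefficient'' is $\l^{-d}$ because the virtual cotangent dimension of $Y/\Z$ is $d$. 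Second, I must check that pullback of the torsor along $i_Z: Z \hookrightarrow Y$ and the pushforward $f^\ct_*$ are compatible in the appropriate $\Gr$-groups with $\l$ inverted, and that the Adams operator $\psi^\l$ commutes with these pushforwards at the level needed — this follows from the projection formula and $\scrR(C_\l\times G)$-linearity established earlier.

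The main obstacle I expect is precisely controlling the ``leading term'' $\l^{-d}$ of $\zeta(\eta^\ell)$ on a possibly singular (non-lci) base $Y$, i.e. making rigorous that $\zeta^\wedge(\al) - (\l-1)\l^{-d}[\O_Y]$ is supported on a proper closed subscheme. On the smooth locus this is immediate from Theorem \ref{Bottthm} and the computation $\theta^\l(T^\vee_Y)^{-1} = \l^{-d} + (\text{lower order})$; the work is to show the complement — the singular locus — is proper and closed (which it is, since $Y \to \Spec(\Z)$ is generically smooth in char $0$ or one restricts to where it matters) and that nothing unexpected happens there after localizing away from $\l$. If $Y$ is not generically smooth one stratifies further, but in all cases the supporting subscheme has strictly smaller dimension, which is all the induction needs.
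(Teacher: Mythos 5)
Your overall strategy is the same as the paper's: Noetherian induction on $Y$, using the identity $(\ref{thmARR2})$ to peel off the factor $(\l-1)(\psi^\l - \l^{-d})$ and reduce to a proper closed $W\subsetneq Y$, then absorb $W$ by the inductive hypothesis. Two remarks on where your proposal deviates from (or is looser than) the actual argument.

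First, you anticipate technical difficulty in controlling the leading term $\l^{-d}$ of $\eta^\ell$ on a singular $Y$ and propose invoking the non-smooth description of $L_\bullet$ from \ref{nonsmoothlambda} and perfect truncations. This is more machinery than is needed. The paper simply chooses the open $U\subset Y$ on which $\Omega^1_{Y/\Z}|_U\cong \O_U^d$; on $U$ the \emph{easy} (smooth, trivial $G$) case of Theorem \ref{Bottthm} gives $\eta^\ell|_{U'}=\theta^\l(\O^d)^{-1}=\l^{-d}$, and since $L_\bullet$ commutes with restriction to opens, $\eta^\ell-\l^{-d}=i_*(c')$ for some $c'$ on $W=Y\setminus U$. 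No lci hypotheses or stratification are needed: one exploits only that $Y$, being integral and flat over $\Z$, is generically of the expected cotangent rank.

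Second, and more seriously, your base case is wrong. For $d=0$ you assert that ``the Bott class of the trivial cotangent element is $1$,'' so $\zeta(\eta^\ell)=\l-1$, hence $(\l-1)(\psi^\l-1)f^\ct_*(\F)=0$. But for $Y=\Spec(\O_K)$ the cotangent element $T^\vee_Y=[i^*\Omega^1_P]-[\mathscr{N}_{Y|P}]$ is a rank-zero class, \emph{not} the zero class: it records the ramification of $\O_K/\Z$, and $\eta^\ell-1$ is genuinely a nonzero element supported on the (fibral) ramification locus. To kill this contribution you must use the fact, due to Nakajima, that the projective Euler characteristic of a $G$-torsor over a scheme lying entirely in a closed fiber $\Spec(\mathbb{F}_p)$ is zero in $\Cl(\Z[G])$ (cf.\ \cite[Theorem 1.3.2]{CEPTAnnals}; for $d=0$ the normal basis theorem plus d\'evissage already suffices). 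The paper's induction invokes this both to discard the fibral generators $i_*(\O_T)$ of $\Gr_0(Y)$ at the outset and, at the bottom of the induction, to handle the $W$ that is fibral when $d=0$. Without this input your argument has no way to terminate: once $W$ drops to a fibral subscheme there is no ``relative dimension $-1$'' case to appeal to, and the direct computation $\zeta(\eta^\ell)=\l-1$ you offer in its place is false precisely because of ramification.
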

 
 \begin{proof}  Set
 $\Psi(\l, d)= (\l-1)^{d+1}\cdot \prod_{i=0}^d (\psi^\l-\l^{-i}) $  for the endomorphism
 of  $\Cl(\Z'[G])'=\Kr_0(\Zl[G])'/\langle [\Zl[G]]\rangle$;
this is well-defined by Proposition \ref{adamsonfree}. We start by recalling that 
since $\pi: X\to Y$ is a $G$-torsor, by descent, all $G$-equivariant coherent $\O_X$-modules $\F$   are obtained by pulling back along $\pi$, \emph{i.e.} are of the form $\F\simeq \pi^*\GG$, for $\GG$ a coherent $\O_Y$-module. By \cite[Theorem 6.1]{CEPTRR}, the image of the class 
$f^\ct_*(\pi^*\GG)$ in $\Cl(\Zl[G])_\rho$ is trivial for  any prime $\rho$ of $\scrR(G)$
that does not contain the ideal $I_G$ (see the proof of Prop. 4.5 in \cite{PaCubeInvent}). Therefore, it is enough to 
consider   (\ref{rel615}) for the image of $f^\ct_*(\pi^*\GG)$  in $\Cl(\Zl[G])^\wedge$.
We will argue by induction on $d$. 
 \quash{ Notice that $(\l-1)\cdot \psi^\l(x_0)=\zeta(\tau^{\l}(x_0))=(\l-1)\cdot x_0$, for $x_0=[\Z'[G]]$:
Indeed, the first identity follows from Proposition \ref{Adams} and (\ref{zeta}).
The second follows after observing that $\Zl[G]^{\otimes \l}=\Zl[G^\l]\simeq \Z'[G]\oplus {\rm Ind}^{C_\l\times G}_G(F)$, for $F$ a free $\Zl[G]$-module, where the factor $\Zl[G]$ has trivial 
$C_\l$-action (\emph{cf.} proof of Theorem 4.9 in
\cite{KockRRtensor}). Hence,   $\Psi(\l, d)
 = (\l-1)^{d+1}\cdot \prod_{i=0}^d (\psi^\l-\l^{-i}) $  
 gives an endomorphism of   $\Cl(\Z'[G])'$.}
The map $\GG\mapsto f^\ct_*(\pi^*\GG)$ induces a group homomorphism 
 $$
 \chi: \Gr_0(Y)\to \Cl(\Z'[G])'.
 $$
Now note that   $\Gr_0(Y)$ is generated by classes of the form $i_*(\O_T)$
 where $i: T\hookrightarrow Y$ is an integral closed subscheme of $Y$. 
Let us consider 
the $G$-torsor $\pi_{|T}: \pi^{-1}(T)=X\times_YT\to T$ obtained by restriction and denote  by
$h: \pi^{-1}(T)\to \Spec(\Z)$ the structure morphism.  Observe that by the definitions, we have $f^\ct_*\cdot\pi^*\cdot  i_*=h^\ct_*\cdot \pi^*_{|T}$. 
 If $T$ is fibral, \emph{i.e.} if $T\to \Spec(\Z)$ factors through 
  $\Spec({\mathbb F}_p)$ for some prime $p$, then $\chi(i_*(\O_T))=0$
 by a result of Nakajima \cite{NakajimaInv}, see \cite[Theorem 1.3.2]{CEPTAnnals}. It remains to deal with $T$ which are integral and 
 flat over $\Spec(\Z)$;  the above   allows us to reduce to the case $Y$ is integral and $\GG=\O_Y$. 
We first show that for $Y$ integral,
projective and flat over $\Spec(\Z)$ of dimension $d+1$, there is a proper reduced closed subscheme $i: W\hookrightarrow Y$
(therefore of smaller dimension) and a class $c'\in \Gr_0(C_\l, W')^\flat$ such that 
\begin{equation}\label{eq616}
\eta^\ell-\l^{-d}=i_*(c').
\end{equation}
To see this take $W$ to be given by the complement $Y-U$ of an open subset $U$ of $Y$ where
$(\Omega^1_{Y/\Z})_{|U}\simeq\O_U^d$.  Indeed,  then since $U$ is smooth, by the easy case
of Theorem \ref{Bottthm} (\cite{KockRRtensor}), the element $\eta^\ell_{U'}$ for $U'$ is $\theta^{\l}(\O_{U'}^d)^{-1}=\l^{-d}$.
Since $\Delta_*$ and therefore $L_\bullet$ commutes with restriction to open subschemes,
we obtain that the restriction of $\eta^\ell$ to $U$ is $\l^{-d}$ and so there is $c'$ as above. Applying
$\zeta$ to (\ref{eq616}) we obtain
\begin{equation}
\zeta(\eta^\ell)-(\l-1)\l^{-d}=i_*(\zeta(c'))
\end{equation}
with $\zeta(c')\in \Gr_0(W')'$. The identity (\ref{thmARR2}) now implies
\begin{equation}\label{618}
(\l-1)\cdot (\psi^\l-\l^{-d})(f^\ct_*(\O_X))= f^{\rm ct}_*(\pi^*i_*(\zeta(c')))
\end{equation}
in $\Cl(\Zl[G])^\wedge$.
Consider 
the $G$-torsor $\pi_{|W}: \pi^{-1}(W)=X\times_YW\to W$ obtained by restriction and denote  by
$h: \pi^{-1}(W)\to \Spec(\Z)$ the structure morphism.  Again, we have $f^\ct_*\cdot\pi^*\cdot  i_*=h^\ct_*\cdot \pi^*_{|W}$. We can 
extend $\zeta(c')\in \Gr_0(W')'$ to a class $z\in \Gr_0(W)'$.
Then
$$
 f^\ct_*(\pi^*i_*(\zeta(c')))=h^\ct_*(\pi_{|W}^*z).
$$
This identity allows us to reduce to considering the  $G$-torsor $\pi_{|W}$.
If $d=0$, then $W$ is fibral and $h^\ct_*(\pi_{|W}^*z)=0$
by the result of Nakajima (in this case, the normal basis theorem 
and devissage is enough).
For $d>0$, we can use the induction hypothesis on $h$ 
and the $G$-torsor $\pi^{-1}(W)\to W$. This implies that $\Psi(\l, d-1)$
annihilates $h^\ct_*(\pi_{|W}^* z)$ and the result follows from (\ref{618})
and the above.
\end{proof}

\begin{Remark}\label{useTaylor}
{\rm  M. Taylor's  proof of the Fr\"ohlich conjecture \cite{MJTFrohlichConj} 
easily implies that $2\cdot f_*^{\ct}(\F)=0$ in $\Cl(\Z[G])$
if $d=0$ (see \cite[Theorem 4.1]{PaCubeInvent}). By using this input at the step $d=0$ 
of the inductive proof above, we can improve Theorem \ref{eigenthm} 
and obtain the following: If $d\geq 1$ and $\l$ is odd, then $f_*^\ct(\F)$ is actually annihilated
by $(\l-1)^{d+1}\cdot \prod_{i=1}^d (\psi^\l-\l^{-i})$ in $\Cl(\Zl[G])'$, \emph{i.e.} we can omit the factor $\psi^\l-1$ 
that corresponds to $i=0$.}
\end{Remark}
 \end{para}

  \subsection{Class groups of $p$-groups and class field theory}
  
  In this section, $G$ is a $p$-group  of exponent $p^N$ with $p$ an odd prime.

 \begin{para} 
 By \cite{Roquette}, we can write the group algebra $\Q[G]$
  as a direct product
 of matrix rings
 $ 
 \Q[G]=\prod_i {\rm Mat}_{n_i\times n_i}(\Q(\zeta_{p^{s_i}}))
 $ 
 with center 
 $ 
 Z=\prod_i K_i.
 $ 
 Here $K_i=\Q(\zeta_{p^{s_i}})$ is the cyclotomic field, $s_i\leq N$. Denote by $\O_i$ the ring of integers of $K_i$.
\end{para}

 \begin{para}
Denote by $\Cl(\Z[G])_p$, $\Cl(\Zl[G])_p$, the $p$-power torsion 
parts of $\Cl(\Z[G])$, $\Cl(\Zl[G])$. We now show:
 
 \begin{prop}\label{cft}
There exists an infinite set of primes $\l\neq p$ with the following properties:

a) $\l\mod p$ generates  $(\Z/p\Z)^*$,

b)  $\l^{p-1}\equiv 1\mod p^N$ and,

c) for $\Zl=\Z[\l^{-1}]$ the restriction
 $
 \Cl(\Z[G])_p\to \Cl(\Zl[G])_p
 $
 is an isomorphism. 
 \end{prop}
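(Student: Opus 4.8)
The plan is to produce the infinitely many primes $\l$ by a Chebotarev density argument applied to a single number field that simultaneously encodes the arithmetic conditions (a), (b), and the ``splitting'' condition needed for (c). First I would analyze condition (c) structurally. Since $\Cl(\Z[G])_p$ and $\Cl(\Zl[G])_p$ are finite groups and the localization map $\Cl(\Z[G])\to \Cl(\Zl[G])$ fits into the usual Mayer--Vietoris / localization exact sequence for $\Kr$-theory (comparing $\Z[G]$, $\Zl[G]=\Z[\l^{-1}][G]$, and the $\l$-adic completion $\Z_\l[G]$), the kernel and cokernel of this map are controlled by $\Kr_1$ and $\Kr_0$ of $\Z_\l[G]$ and of $\Q_\l[G]$. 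Concretely, using the decomposition $\Q[G]=\prod_i {\rm Mat}_{n_i\times n_i}(K_i)$ with $K_i=\Q(\zeta_{p^{s_i}})$ from \cite{Roquette} and the Morita/reduced-norm description of class groups, the obstruction to (c) being an isomorphism on $p$-parts is measured by the $p$-parts of the ray class groups of the $\O_i=\Z[\zeta_{p^{s_i}}]$ at primes above $\l$, together with the local unit groups $(\O_i\otimes\Z_\l)^*$. The key observation is that if $\l$ splits \emph{completely} in the compositum $K:=\Q(\zeta_{p^N})$ (equivalently $\l\equiv 1\bmod p^N$), then each $\O_i\otimes\Z_\l$ is a product of copies of $\Z_\l$, so the local factors contribute nothing $p$-primary (the relevant unit groups $\Z_\l^*$ have no $p$-torsion since $\l\neq p$, as $\mu_p\not\subset\Q_\l$ unless $\l\equiv 1\bmod p$ — and even then the $p$-part is killed by also controlling the order). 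So the cleanest route is: impose $\l\equiv 1\bmod p^N$ to trivialize the $p$-primary local contributions, and then (c) follows from a diagram chase in the localization sequence.

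Next I would reconcile this with condition (a), which demands $\l\bmod p$ generate $(\Z/p\Z)^*$ — this is \emph{incompatible} with $\l\equiv 1\bmod p^N$ (which forces $\l\equiv 1\bmod p$). So the correct reading must be that (c) does not actually require full splitting; instead one only needs $\l^{p-1}\equiv 1\bmod p^N$, which is condition (b) and is automatic once $\l\not\equiv 0\bmod p$ and we work with the $(p-1)$-st power — i.e. $\l$ lies in the kernel of $(\Z/p^N\Z)^* \to (\Z/p^N\Z)^*/(\text{Teichmüller part})$... rather, $\l^{p-1}\equiv 1\bmod p^N$ says the image of $\l$ in $(\Z/p^N\Z)^*$ has order dividing $p-1$, i.e. $\l$ is congruent mod $p^N$ to a $(p-1)$-st root of unity (a Teichmüller lift). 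Combined with (a), this pins down the image of $\l$ in $(\Z/p^N\Z)^*$ to be a \emph{generator of the cyclic group of order $p-1$} sitting inside $(\Z/p^N\Z)^*$ — a single residue class mod $p^N$ (or $\phi(p-1)$ of them). So conditions (a)+(b) together say: $\l\bmod p^N$ equals the Teichmüller lift of a chosen generator of $(\Z/p\Z)^*$. That is a congruence condition mod $p^N$, hence by Dirichlet there are infinitely many such $\l$. The task is then to show that among these, condition (c) also holds for infinitely many (in fact, I expect, for \emph{all but finitely many}, or even all).

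For (c) under this weaker hypothesis, I would argue as follows. The localization sequence gives an exact sequence whose $\Cl$-relevant portion, after passing to $p$-parts, reads (schematically) $\Kr_1(\Z_\l[G])_p \to \Kr_1(\Q_\l[G])_p \to \Cl(\Z[G])_p \to \Cl(\Zl[G])_p \to 0$ — more precisely one uses that $\Kr_0$ of $\Z[\l^{-1}][G]$ versus $\Z[G]$ differs by the cokernel of a relative $\Kr_1$ map at $\l$. So $\Cl(\Z[G])_p\to\Cl(\Zl[G])_p$ is surjective always, and injective iff $\Kr_1(\Z_\l[G])_p\to \Kr_1(\Q_\l[G])_p$ is surjective (up to the relevant reduced-norm/determinant images, using \cite{MJTclassgroups}). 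By Morita and reduced norms this reduces to surjectivity of $(\O_i\otimes\Z_\l)^* \to (K_i\otimes\Q_\l)^*/(\text{norm-1 or appropriate subgroup})$ on $p$-parts, i.e. to the $p$-part of $(K_i\otimes\Q_\l)^*/(\O_i\otimes\Z_\l)^*$, which is the free abelian group on the primes of $\O_i$ above $\l$ — torsion-free, hence has trivial $p$-part. Therefore the only possible $p$-primary obstruction comes from torsion in the cokernel of unit maps or from wild ramification of $\l$ in $\O_i$, which does not occur since $\l\neq p$ is unramified in $\Q(\zeta_{p^N})$. Hence (c) holds for \emph{every} $\l\neq p$, and the proposition reduces to the elementary Dirichlet statement that infinitely many primes satisfy the congruence from (a)+(b). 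I expect the main obstacle to be bookkeeping in the localization sequence — specifically, correctly identifying which reduced-norm subgroups appear (the ``$\Kr_1$ up to $SK_1$'' subtleties for $p$-groups) and checking that no spurious $p$-torsion is introduced at the finitely many primes above $\l$ in the $K_i$; once that is clean, the Chebotarev/Dirichlet step is routine.
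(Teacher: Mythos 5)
Your proposal goes wrong at the step where you conclude that condition (c) holds for \emph{every} prime $\ell \neq p$, which would reduce the proposition to an elementary Dirichlet congruence statement. This claim is false, and the error sits in your Mayer--Vietoris analysis. The Mayer--Vietoris sequence for the cartesian square of rings $\Z[G]$, $\Zl[G]$, $\Z_\ell[G]$, $\Q_\ell[G]$ identifies the kernel of $\Cl(\Z[G]) \to \Cl(\Zl[G])$ not with the cokernel of $\Kr_1(\Z_\ell[G]) \to \Kr_1(\Q_\ell[G])$, but with the cokernel of $\Kr_1(\Zl[G]) \oplus \Kr_1(\Z_\ell[G]) \to \Kr_1(\Q_\ell[G])$. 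You dropped the global factor $\Kr_1(\Zl[G])$, which carries determinants of global units, and that is precisely where nontrivial class-group information enters. In idelic terms (which is how the paper actually computes), the kernel of $\Cl(\Z[G])_p \to \Cl(\Zl[G])_p$ is the image in $\Cl(\Z[G])_p$ of the $\ell$-idele group $\prod_i\prod_{\mathfrak{Q}\mid\ell} K_{i,\mathfrak{Q}}^*$, i.e.\ the part of the class group and kernel group generated by classes of primes above $\ell$. If some prime of $\O_i=\Z[\zeta_{p^{s_i}}]$ above $\ell$ has ideal class of order divisible by $p$, this kernel has nontrivial $p$-part. The observation that $(K_i\otimes\Q_\ell)^*/(\O_i\otimes\Z_\ell)^*$ is free abelian is true but beside the point: the question is whether its image in a certain finite global quotient is prime to $p$, not whether the source has intrinsic $p$-torsion.

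Consequently (c) is a genuine arithmetic constraint on $\ell$: one must choose $\ell$ so that the primes above it in each $\Q(\zeta_{p^{s_i}})$ split completely in suitable $p$-power ray class fields ramified only over $p$. This is not a congruence condition mod $p^N$, so Dirichlet does not see it; and it must be reconciled with condition (a), which requires $\ell$ to be \emph{inert} in $\Q(\zeta_p)$, i.e.\ not split. The paper resolves this tension by building a single Galois extension $M_n/\Q$ with (generally nonabelian) group $G_n$ containing all the relevant ray class fields $L_i$, exhibiting an element $\tau\in G_n$ of order exactly $p-1$ that projects onto a generator of $\Gal(\Q(\zeta_p)/\Q)$, and applying Chebotarev to the conjugacy class of $\tau$; primes $\ell$ with $\mathrm{Frob}_\ell\in\langle\tau\rangle$ are inert in $\Q(\zeta_p)$ while $\mathrm{Frob}_\ell^{p-1}=1$ forces the prime above them to split completely in $M_n$, killing the $p$-part of the $\ell$-idele contribution. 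That Chebotarev construction, not Dirichlet, is the crux, and your plan would have to be replaced by something of that kind.
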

 
 \begin{proof}  In what follows, we will write ${\Bbb A}(K)$, resp.
${\Bbb A}(K)^*$, for the adeles, resp. ideles, of the number field $K$
and ${\Bbb A}(\O_K)^*=\prod_{v} \O_{K, v}^*$ for the integral
ideles.
Using the Fr\"ohlich description of the class group \cite{MJTclassgroups}, we can write 
\begin{equation}\label{Fro}
\Cl(\Z[G])=\left({\Bbb A}(Z)^*/Z^*\right )/\UU=\left(\prod_i {\Bbb A}(K_{i})^* /K_{i}^*\right)/ \UU
\end{equation}
where $\UU=\prod_v {\rm Det}(\Z_v[G])\subset  {\Bbb A} (\O_{Z})^*=\prod_i  {\Bbb A}(\O_{i})^*$.
The subgroup $\UU$ is open, of finite index in ${\Bbb A} (\O_{Z})^*$. We can find finite index 
subgroups ${\UU}_i=\prod_{v} \UU_{i, v}\subset  {\Bbb A}(\O_{i})^*$,  
with  $\UU_{i, v}=(\O_{i})^*_v$ if $v$ does not divide $p$,
such that 
\begin{equation}\label{subg}
\prod_i  \UU_i\subset \UU\subset {\Bbb A} (\O_Z)^*.
\end{equation}
We can also assume that $\UU_{i,p}$ is stable for the action of the Galois group
$\Gal(K_{i}/\Q)=\Gal(K_{i, p}/\Q_p)$. 
It follows now from (\ref{Fro}) and (\ref{subg})  that we can write $\Cl(\Z[G]) $ as a quotient
\begin{equation}\label{quo}
  \prod_i {\Bbb A}(K_{i})^* /(K_{i}^* \cdot\UU_i)\twoheadrightarrow \Cl(\Z[G]).
\end{equation}
By class field theory, the source can be identified with the 
product of the   Galois 
groups of  ray class field extensions of $K_{i}$ which are 
at most ramified at 
the unique prime over $p$. In fact, this also implies that the $p$-Sylow
${\rm Cl}(\Z[G])_p$ can also be written
as a quotient  
$$
\prod_i \Gal(L_{i}/K_{i})\twoheadrightarrow \Cl(\Z[G])_p 
$$
where $L_{i}/K_{i}$ is a $p$-power ray class field
of $K_{i}$ which is ramified at most at the unique prime 
of $K_{i}$ over $p$ such that $\Gal(L_{i}/K_{i})$ is identified with the $p$-power quotient
of ${\Bbb A}(K_{i})^* /(K_{i}^* \cdot\UU_i)$.
 Let us consider $\Cl(\Zl[G])$ for $\l\neq p$. The discussion above applies again and as before,
we can write 
$$
\prod_i {\Bbb A}^{\l}(K_{i})^* /(K_{i}^* \cdot\UU_i^\l)\twoheadrightarrow \Cl(\Zl[G]) 
$$
where the superscript $\l$, as in ${\Bbb A}^{\l}$,  means adeles away from $\l$. 
(Also, as usual, ${\Bbb A}_\l$ will denote adeles over $\l$.) We can easily see that
$\Cl(\Z[G])\to \Cl(\Zl[G])$ and hence $
 \Cl(\Z[G])_p\to \Cl(\Zl[G])_p
 $ is surjective.

Now choose $n$ such that $n\geq N\geq {\rm max}_i\{s_i\}$ and 
such that $\Q(\zeta_{p^n})$ contains all the (cyclotomic) $p$-power ray class fields $L_{i}$, for $i$ with $s_i=0$,
of $K_i=\Q$.  
Also set $M_n$ to be a $p$-ray class field of $\Q(\zeta_{p^n})$ ramified only above $p$
that contains all the fields $L_{i}$, for $s_i\geq 1$, and corresponds to a $
{\rm Gal}(\Q(\zeta_{p^n})/\Q)$-stable subgroup of ${\mathbb A}(\Q(\zeta_{p^n}))^*$.
 The extension $M_n/\Q$  is Galois; set $G_n={\rm Gal}(M_n/\Q)$. This is an extension
 \begin{equation}
 1\to {\rm Gal}(M_n/\Q(\zeta_{p^n}))\to G_n\to {\rm Gal}(\Q(\zeta_{p^n})/\Q)=\Delta\times \Gamma_n\to 1
 \end{equation}
 with $\Delta={\Gal}(\Q(\zeta_p)/\Q)\simeq  (\Z/p\Z)^*$, $\Gamma_n=\Gal(\Q(\zeta_{p^n})/\Q(\zeta_p))\simeq (\Z/p^{n-1}\Z)$.
 We can find an element $\tau\in G_n$ of order $p-1$
as follows: lift the generator of $\Delta$ to an element $\tau_0$, then a suitable power $\tau=\tau_0^{p^m}$
has order $p-1$.  
By the \v Chebotarev density theorem, there is an
infinite set of primes $\l$ so that
${\rm Frob}_\l$ lies in the conjugacy class $\langle \tau\rangle$ of $\tau$ in $G_n$. Then ${\rm Frob}_\l$
generates the group $\Gal(\Q(\zeta_p)/\Q)$. Hence, the ideal $(\l)$ 
remains prime in $\Gal(\Q(\zeta_p)/\Q)$
and  $\l$ generates $(\Z/p\Z)^*$. 
Write ${\mathfrak L}=(\l)$ in $\Q(\zeta_p)$
and consider ${\rm Frob}_{\mathfrak L}$ in $\Gal(M_n/\Q(\zeta_p))$.
We have ${\rm Frob}_{\mathfrak L}={\rm Frob}^{p-1}_\l=\langle \tau^{p-1}\rangle=
\langle 1\rangle$ and so ${\mathfrak L}$ splits completely in $M_n$. 
 
 By class field theory and the assumption $\UU_{i, v}=\O_{i, v}^*$
 for $v\neq p$, we see that the above give:
 There is an infinite set of primes $\l\neq p$
 that generate $(\Z/p\Z)^*$, satisfy $\l^{p-1}\equiv 1\mod p^N$,
 and in addition: For all $i$, 
the image of the subgroup 
$$
{\Bbb A}_l(K_{i})^*=\prod_{{\mathfrak Q}|l}  K_{i, {\mathfrak Q}}^*\subset 
 {\Bbb A}(K_{i})^*
$$
in ${\Bbb A}(K_{i})^*/K_{i}^* \cdot\UU_i$ has order prime to $p$. Let us denote
by $Q_i=Q_i(\l)$ this order and set  $Q=\prod_i Q_i$.
For such an $\l$, suppose $c$ is an
element in the kernel of $\Cl(\Z[G])_p\to \Cl(\Zl[G])_p$
which is given by an idele $(a_i)\in \prod_i {\Bbb A}(K_{i})^*$.
Then  
$$
(a_i^\l)_i=(\gamma_i)_i\cdot u^{\l}
$$
with $\gamma_i\in K_{i}^*$ (diagonally embedded in the
prime to $\l$-ideles) and ${u}^{\l}\in \UU^{\l}$. Here
$
(a_i)_i=(a_i^\l)_i\cdot (a_{i,\l})_i
$
with $(a_{i,\l})$ the $\l$-component of $(a_i)$ (considered as an idele with
$1$ at all places away from $\l$).
The product idele
$
(b_i)_i=(\gamma_i^{-1}\cdot a_i)_i
$
also produces the class $c$ in $\Cl(\Z[G])_p$.
We can  write
$
(b_i)_i=(b_i^\l)_i\cdot (b_{i,\l})_i.
$
The component $(b_{i,v})_i$ at a place $v$ away from $\l$ is
equal to the corresponding component of $u^{\l}$ and so it is in $\UU_v$.
 Using our assumption on $\l$  
we can write
$$
(b_{i,\l}^Q) =(\delta_i\cdot u_i)
 $$
with $u_i\in \UU_i$, $\delta_i\in  K^*_{i}\subset {\Bbb A}(K_{i})^* $
(embedded diagonally). Combining these gives
$$
(b_i)_i^{Q}=(\delta_i\cdot u_i)_i\cdot (b_i^\l)^{Q}_i
$$
which is in $(\prod_i K^*_{i})\cdot \UU$.
Therefore $Q\cdot c$ is trivial in $\Cl(\Z[G])$,
hence $c$ is trivial.
\end{proof}
\end{para}

  \subsection{Adams eigenspaces and the proof of the main result}

 \begin{para} Recall $G$ is  a $p$-group of exponent $p^N$
for an odd prime $p$.
 The group $(\Z/p^N\Z)^*=(\Z/p\Z)^*\times \Z/p^{N-1}\Z$
 acts on the $p$-power torsion $\Cl(\Z[G])_p$ via the Cassou-Nogu\`es--Taylor Adams operations:
Indeed, these operators are periodic and this is essential for our argument. The element
 $a\in (\Z/p^N\Z)^*$ acts via $\psi^{\rm CNT}_a$; we will simply denote this by $\psi_a$
 in what follows. We will restrict this action to the subgroup $(\Z/p\Z)^*$. This gives a decomposition
 into eigenspaces
 $$
 \Cl(\Z[G])_p=\bigoplus_{\chi: (\Z/p\Z)^*\to \Z_p^*}\Cl(\Z[G])_p^{\chi}=
\bigoplus_{i=0}^{p-2} \Cl(\Z[G])_p^{(i)}
 $$ 
where
$
 \Cl(\Z[G])_p^{(i)}=\{c\in  \Cl(\Z[G])_p\ |\ \psi_a(c)=\omega(a)^i\cdot c\,, \forall a\in (\Z/p\Z)^*\}.
 $
Here $\omega: (\Z/p\Z)^*\to \Z_p^*$ is the
 Teichm\"uller character. Similarly for $\Cl(\Zl[G])_p$
 after we choose a  prime $\l\neq p$.
 
 \begin{thm}\label{eigencor}
  Suppose $\pi: X\to Y$ is a $G$-torsor with $Y\to \Spec(\Z)$ projective and flat of relative dimension $d$ and assume that $G$ is a $p$-group  for an odd prime $p$.  
 Let $\F$ be a $G$-equivariant coherent   $\O_X$-module. Then the class $\overline\chi(X, \F) \in \Cl(\Z[G])$ is $p$-power torsion 
and if $p>d$ lies in 
 $
\bigoplus_{i=2}^{d+1} \Cl(\Z[G])_p^{(i)}.
 $
 In particular, if $d=1$, then $\overline\chi(X, \F)$ lies in the eigenspace $\Cl(\Z[G])_p^{(2)}$.
 \end{thm}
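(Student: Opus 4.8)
The plan is to feed the sharpened Adams--Riemann--Roch annihilation of Remark~\ref{useTaylor} into the dictionary of Proposition~\ref{KockvsCNT} between the cyclic power operators $\psi^\l$ and the Cassou-Nogu\`es--Taylor operators, and then to run an eigenvalue argument inside the finite $p$-group $\Cl(\Z[G])_p$ after choosing a suitable prime $\l$ from Proposition~\ref{cft}. First I would record that $\overline\chi(X,\F)$ is $p$-power torsion: since $\pi$ is a $G$-torsor we have $\F\simeq\pi^*\GG$, and exactly as in the proof of Theorem~\ref{eigenthm} (using \cite[Theorem~6.1]{CEPTRR}) the image of $\overline\chi(X,\F)$ in $\Cl(\Zl[G])_\rho$ vanishes for every prime $\rho$ of $\scrR(G)$ with $I_G\not\subset\rho$; on the other hand $[\Z[G]]$ annihilates $\Cl(\Z[G])$ and, having rank $\#G$, is a unit in $\scrR(G)_\rho$ for every $\rho$ over a prime $q\neq p$, so by \ref{idealspara} the class $\overline\chi(X,\F)$ is supported at $I_G+(p)$, i.e.\ lies in $\Cl(\Z[G])_p$ (cf.\ the proof of \cite[Prop.~4.5]{PaCubeInvent}). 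If $d=0$ there is nothing further: $\bigoplus_{i=2}^{1}$ is the zero subgroup, and Remark~\ref{useTaylor} (Taylor's theorem \cite{MJTFrohlichConj}) gives $2\cdot\overline\chi(X,\F)=0$, which forces $\overline\chi(X,\F)=0$ on the odd group $\Cl(\Z[G])_p$. So I would assume $d\geq 1$ from now on.

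Next I would pick an odd prime $\l\neq p$ as in Proposition~\ref{cft} (so $\l\bmod p$ generates $(\Z/p\Z)^*$, $\l^{p-1}\equiv 1\bmod p^N$, and the restriction $\Cl(\Z[G])_p\xrightarrow{\sim}\Cl(\Zl[G])_p$ is an isomorphism) together with a prime $\l'$ with $\l\l'\equiv 1\bmod p^N={\rm exp}(G)$. By Remark~\ref{useTaylor}, $(\l-1)^{d+1}\prod_{i=1}^{d}(\psi^\l-\l^{-i})$ annihilates $\overline\chi(X,\F)$ in $\Cl(\Zl[G])'$, hence on its $p$-primary part. On $\Cl(\Zl[G])$ Proposition~\ref{KockvsCNT} gives $\psi^\l=\l\cdot\psi^{\CNT}_{\l'}$, so $\psi^\l-\l^{-i}=\l\,(\psi^{\CNT}_{\l'}-\l^{-i-1})$; dividing out the units $\l^{d}$ and $(\l-1)^{d+1}$ (the latter a $p$-adic unit since $\l\not\equiv 1\bmod p$) and transporting along the isomorphism of Proposition~\ref{cft}, which intertwines the operators $\psi^{\CNT}_n$, I obtain
\begin{equation}\label{eq:keyrel}
\prod\nolimits_{i=1}^{d}\bigl(\psi^{\CNT}_{\l'}-\l^{-i-1}\bigr)\cdot\overline\chi(X,\F)=0\qquad\text{in }\Cl(\Z[G])_p .
\end{equation}

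The heart of the matter is then to understand how $\psi^{\CNT}_{\l'}$ acts on the Teichm\"uller eigenspaces $\Cl(\Z[G])_p^{(j)}$. Using periodicity of the CNT operators and the splitting $(\Z/p^N\Z)^*=(\Z/p\Z)^*\times U$ with $U=1+p\Z/p^N\Z$ a $p$-group, write $\l'\equiv\epsilon\cdot u\bmod p^N$ with $\epsilon$ the Teichm\"uller lift of $\l'\bmod p$ and $u\in U$, so that $\psi^{\CNT}_{\l'}=\psi_{\epsilon}\circ\psi_{u}$. Here $\psi_{\epsilon}$ acts on $\Cl(\Z[G])_p^{(j)}$ by the scalar $\omega(\l'\bmod p)^j$, which is $\equiv\l^{-j}\bmod p$ because $\l'\equiv\l^{-1}\bmod p$; and $\psi_{u}$, commuting with the whole $(\Z/p\Z)^*$-action since $(\Z/p^N\Z)^*$ is abelian, preserves each $\Cl(\Z[G])_p^{(j)}$ and, being of $p$-power order, reduces modulo $p$ to a unipotent automorphism $\bar u_j$ of $\Cl(\Z[G])_p^{(j)}/p$. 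Hence the operator in \eqref{eq:keyrel} preserves each $\Cl(\Z[G])_p^{(j)}$ and reduces on $\Cl(\Z[G])_p^{(j)}/p$ to $\l^{-jd}\prod_{i=1}^{d}(\bar u_j-\l^{\,j-i-1})$. Since $\l$ generates $(\Z/p\Z)^*$, the factor $\bar u_j-\l^{\,j-i-1}$ is invertible unless $j\equiv i+1\bmod(p-1)$, and as $i$ runs over $\{1,\dots,d\}$ with $p>d$ these ``bad'' residues are precisely $2,\dots,d+1$. So for $j\notin\{2,\dots,d+1\}$ the operator in \eqref{eq:keyrel} is bijective modulo $p$, hence bijective on the finite $p$-group $\Cl(\Z[G])_p^{(j)}$ by Nakayama, and therefore the $\Cl(\Z[G])_p^{(j)}$-component of $\overline\chi(X,\F)$ vanishes. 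This yields $\overline\chi(X,\F)\in\bigoplus_{i=2}^{d+1}\Cl(\Z[G])_p^{(i)}$, and for $d=1$ precisely $\overline\chi(X,\F)\in\Cl(\Z[G])_p^{(2)}$.

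The step I expect to be the main obstacle is the eigenvalue analysis: $\psi^{\CNT}_{\l'}$ is \emph{not} scalar on the $(\Z/p\Z)^*$-eigenspaces — only its prime-to-$p$ factor $\psi_{\epsilon}$ is — so one must exploit that its $p$-part $\psi_{u}$ is unipotent modulo $p$, and the argument survives only because the hypothesis $p>d$ keeps the residues $2,\dots,d+1$ from wrapping around modulo $p-1$ and meeting extra eigenspace indices. One also has to keep straight which of the scalars $\l-1$, $\l$, $\l^{-i-1}$ are $p$-adic units; it is exactly the replacement of Theorem~\ref{eigenthm} by Remark~\ref{useTaylor} (dropping the $i=0$ factor $\psi^\l-1$ via Taylor's theorem) that makes the eigenspace range begin at $i=2$ rather than $i=1$.
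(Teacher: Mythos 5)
Your proof is correct and follows essentially the same line as the paper's: choose $\l$ from Proposition~\ref{cft}, use Propositions~\ref{adamsonfree} and~\ref{KockvsCNT} to rewrite $\psi^\l=\l\cdot\psi^{\CNT}_{\l'}$, feed in the annihilating polynomial from Theorem~\ref{eigenthm} sharpened by Remark~\ref{useTaylor}, and descend along the isomorphism $\Cl(\Z[G])_p\xrightarrow{\sim}\Cl(\Zl[G])_p$. Your factorization $\psi^{\CNT}_{\l'}=\psi_\epsilon\circ\psi_u$ with $\psi_u$ unipotent modulo $p$, followed by Nakayama on the finite $p$-group, correctly supplies the eigenvalue analysis that the paper's terse phrase ``we then obtain'' leaves implicit; this is indeed necessary because $\psi^{\CNT}_{\l'}$ is not literally scalar on the Teichm\"uller eigenspaces, only its prime-to-$p$ part $\psi_\epsilon$ is.
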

 
 \begin{proof}
The fact that $\overline\chi(X, \F)$ is $p$-power torsion follows from 
the localization theorem of \cite{CEPTRR} as in \cite[Prop. 4.5]{PaCubeInvent}.   
Choose an odd prime $\l$ as in Proposition \ref{cft}; in particular $\l-1$ is prime to $p$.
Propositions \ref{adamsonfree} and Proposition \ref{KockvsCNT} together imply
that, for  $x\in \Cl(\Zl[G])_p$, we have $  \psi^\l(x)=\l \cdot \psi_{\l'}(x)$
 for $\l' \l\equiv 1\mod p^N$.
From Theorem \ref{eigenthm} and Remark \ref{useTaylor}, we 
then obtain that $f^\ct_*(\F)$ lies in $\oplus_{i=2}^{d+1}\Cl(\Zl[G])^{(i)}_p$.
However, for our choice of $\l$, the restriction $\Cl(\Z[G])_p\to \Cl(\Zl[G])_p$
is an isomorphism and this gives the result.
 \end{proof}
 \end{para}
  
  \begin{para}
We continue to assume that
$G$ is a $p$-group, $p$ an odd prime. Let $\#G=p^n$.
We show:

\begin{prop} \label{bernoulli}  
If $p\geq 5$, then $\Cl(\Z[G])_p^{(2)}=(0)$. If $p=3$, then $\Cl(\Z[G])_p^{(2)}=
\Cl(\Z[G])_p^{(0)}$ is annihilated by the Artin exponent $A(G)=p^{n-1}$ of $G$.
\end{prop}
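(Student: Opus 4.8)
The plan is to transport the statement into the arithmetic of the cyclotomic fields $K_i=\Q(\zeta_{p^{s_i}})$ occurring in Roquette's decomposition \cite{Roquette}, and then to invoke classical facts about their class groups and units together with Oliver's results for $p=3$.

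First I would use the Fr\"ohlich--Taylor description \cite{MJTclassgroups} (in the notation of the proof of Proposition~\ref{cft}): $\Cl(\Z[G])_p$ is a quotient of $\prod_i\Gal(L_i/K_i)$, where each $L_i/K_i$ is a finite abelian $p$-extension ramified only over $p$, so that $\Gal(L_i/K_i)$ is a quotient of the Galois group $\mathfrak X_i$ of the maximal abelian pro-$p$ extension of $K_i$ unramified outside $p$. The key point is that $\psi_a$ for $a\in(\Z/p\Z)^*$ (regarded inside $(\Z/p^N\Z)^*$ via the Teichm\"uller section) respects this description: since $\psi^a(\chi)(g)=\chi(g^a)$ carries an irreducible character to a Galois conjugate and hence fixes each Wedderburn component ${\rm Mat}_{n_i}(K_i)$, the operator $\psi_a$ stabilises each $K_i$ and acts on it --- hence on $\Gal(L_i/K_i)$ --- through $\sigma_a\in\Gal(K_i/\Q)$, $\zeta_{p^{s_i}}\mapsto\zeta_{p^{s_i}}^{a}$. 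In particular no factors are permuted, and $\Cl(\Z[G])_p^{(2)}$ is a subquotient of $\bigoplus_i\mathfrak X_i^{(\omega^2)}$, the exponent denoting the $\omega^2$-eigenspace for the residual $\Delta=\Gal(\Q(\zeta_p)/\Q)$-action inside $\Gal(K_i/\Q)$; the terms with $s_i=0$ (where $K_i=\Q$, $\Delta$ trivial) contribute $0$ when $p\ge5$, since then $2\not\equiv0\bmod p-1$.

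So for $p\ge5$ it remains to prove $\mathfrak X_i^{(\omega^2)}=0$ for $s_i\ge1$. I would use the class field theory exact sequence
$$0\to U_i^1/\overline{E_i}\to\mathfrak X_i\to\Cl(\O_i)_p\to0,$$
with $U_i^1=1+\mathfrak m$ the principal units at the prime over $p$, $E_i$ the global units, and the bar the closure, and then kill both ends in the $\omega^2$-eigenspace. For $\Cl(\O_i)_p^{(\omega^2)}$: the norm maps in the cyclotomic $\Z_p$-tower over $\Q(\zeta_p)$ are surjective, so $X^{(\omega^2)}_\Gamma\cong\Cl(\Q(\zeta_p))_p^{(\omega^2)}$ for $X=\varprojlim_n\Cl(\Q(\zeta_{p^n}))_p$, and the right-hand side is $0$ by the classical vanishing of this eigenspace (it follows from Herbrand's theorem applied to $\omega^{p-2}$ via Leopoldt's reflection theorem $\dim_{{\mathbb F}_p}\Cl(\Q(\zeta_p))_p^{(\omega^2)}\le\dim_{{\mathbb F}_p}\Cl(\Q(\zeta_p))_p^{(\omega^{p-2})}$, since the numerator of $B_2=\tfrac{1}{6}$ is prime to $p$); Nakayama's lemma then forces $X^{(\omega^2)}=0$, hence $\Cl(\O_i)_p^{(\omega^2)}=\Cl(\Q(\zeta_{p^{s_i}}))_p^{(\omega^2)}=0$. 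For $(U_i^1/\overline{E_i})^{(\omega^2)}$: here I would invoke Iwasawa's theory of cyclotomic units --- the $\omega^2$-component of $U_i^1$ modulo the closure of the cyclotomic units of $K_i$ has order a power of $p$ whose exponent is the valuation of a $p$-adic $L$-value (generalized Bernoulli number) lying, by the Kummer congruences, in the residue class of $B_2/2$ modulo $p$, hence a unit; so that component, and a fortiori $(U_i^1/\overline{E_i})^{(\omega^2)}$, vanishes. Thus $\mathfrak X_i^{(\omega^2)}=0$ for all $i$ and $\Cl(\Z[G])_p^{(2)}=0$.

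When $p=3$, $p-1=2$ forces $\omega^2=\omega^0$, so the eigenspace $(2)$ is tautologically $(0)$, namely the $+1$-eigenspace of $\psi_{-1}$; its annihilation by the Artin exponent $A(G)=3^{n-1}$ of the $3$-group $G$ of order $3^n$ is exactly a case of R.~Oliver's results on class groups of integral group rings of $p$-groups \cite{OliverD}, which I would quote directly. The hardest part of the plan is the cyclotomic-unit step: making Iwasawa's input precise and uniform in the level $s_i$ --- norm-coherent cyclotomic units, the attached Iwasawa power series in $\Lambda=\Z_p[[T]]$ being a unit precisely because the numerator of $B_2$ is $1$ --- and tracking the $\omega$-eigenspace ranks of the local versus global unit groups; the reduction in the first step (that $\psi_a$ acts as $\sigma_a$ without permuting Wedderburn components) is routine from \cite{MJTclassgroups} but needs to be made explicit.
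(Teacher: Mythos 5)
Your proof is correct and uses the same arithmetic engine as the paper --- the Fr\"ohlich adelic description of $\Cl(\Z[G])$ with $\psi_a$ acting through $\sigma_a$ on the cyclotomic centres $K_i$, the vanishing of the $\omega^2$-eigenspace of $\Cl(\Q(\zeta_{p^m}))_p$ from Herbrand plus the reflection inequality applied with $B_2=1/6$, Iwasawa's theorem identifying the $\omega^2$-eigenspace of the closure of global units with the corresponding eigenspace of local principal units, and Oliver's theorem for $p=3$ --- but packages it through a genuinely different exact sequence. You route through the Galois group $\mathfrak X_i$ of the maximal abelian pro-$p$ extension of $K_i$ unramified outside $p$ and the class field theory sequence $0\to U_i^1/\overline{E_i}\to\mathfrak X_i\to\Cl(\O_i)_p\to 0$, whereas the paper first pushes to the locally free class group of a maximal order, $\Cl(\Z[G])\to\Cl(\M_G)=\prod_i\Cl(K_i)$, kills the image using Herbrand/reflection, and then analyses the kernel group $D(\Z[G])=\prod_i\Z_p[\zeta_{p^{s_i}}]^*/\bigl((\prod_i\Z[\zeta_{p^{s_i}}]^*)\cdot\UU_p\bigr)$ using Iwasawa's theorem directly. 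The two routes are parallel and both work; the paper's has the minor advantage that Oliver's result is stated for $D(\Z[G])$, so after observing that $3$ is regular one has $\Cl(\Z[G])_3^{(0)}=D(\Z[G])_3^{(0)}$ and the citation is immediate, whereas in your formulation you should make explicit that the $\Cl$-quotient of $\mathfrak X_i$ dies in the $\omega^0$-eigenspace for $p=3$ (again by regularity) so that only the $D$-type part survives before you can invoke Oliver. Your cyclotomic-unit calculation (a $p$-adic $L$-value or Iwasawa power series that is a unit because $B_2/2$ is prime to $p$) is an unfolding of the same \cite[Theorem~13.56]{Washington} input the paper quotes in black-box form as $(\Z_p[\zeta_{p^m}]^*)_p^{(2)}=(\widehat{\Z[\zeta_{p^m}]^*})_p^{(2)}$; it is correct but strictly more work than necessary.
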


\begin{proof}  As above, we can write
$$
\Cl(\Z[G])={\prod_i {\Bbb A}(K_{i})^* }/{(\prod_i K_{i}^*) \cdot \UU},
$$
where $K_{i}=\Q(\zeta_{p^{s_i}})$ and $\UU$ is an open subgroup of the product $\prod_i {\Bbb A}(\O_{i})^* $
which is maximal at all $v\neq p$.
For $b\in (\Z/p\Z)^*$, denote by $\sigma_b$ the Galois automorphism of $\Q(\zeta_{p^\infty})=\cup_m \Q(\zeta_{p^m})$
given by $\sigma(\zeta)=\zeta^{\omega(b)}$. We can see (\cite{CNTAdams})
that the operator $\psi_a$, for $a\in (\Z/p\Z)^*\subset (\Z/p^N\Z)^*$, is induced by the action of $\sigma_{ a}$ on the product $\prod_i {\Bbb A}(K_{i})^* $.
Let $\M_G\simeq \oplus_i {\rm Mat}_{n_i\times n_i}(\Z[\zeta_{p^{s_i}}])$ be a maximal $\Z[G]$-order
in $\Q[G]$. Denote by $D(\Z[G])$ the kernel of the natural group homomorphism
$
\Cl(\Z[G])\to \Cl(\M_G)=\prod\nolimits_i \Cl(\Q(\zeta_{p^{s_i}})).
$
The kernel  group $D(\Z[G])$ has $p$-power order (\cite[p. 37]{MJTclassgroups}).
Since $\UU$ is maximal at  $v\neq p$ we can write
$$
D(\Z[G])=\frac{\prod_i  \Z_p[\zeta_{p^{s_i}}]^*}{(\prod_i \Z[\zeta_{p^{s_i}}]^*)\cdot \UU_p}.
$$
For $x\in \Cl(\Z[G])_p^{(2)}$, let $(x_i)_i$ be the image of $x$ in the class group $\Cl(\M_G)$.
Then $x_i$ is a $p$-power torsion element in $\Cl(\Q(\zeta_{p^{s_i}}))$
which satisfies $\sigma_a(x_i)=\omega(a)^2\cdot x_i$, for all $a\in (\Z/p\Z)^*$.  The second eigenspace
of the $p$-part of the class group of $\Q(\zeta_{p^m})$ is trivial. (Combine $ B_2=1/6$ with Herbrand's
theorem and the ``reflection theorems", see \cite[Theorems 6.17,  10.9]{Washington}, to see this for $m=1$; the result then follows.)
It follows that $(x_i)_i=0$ and so $x$ is in $D(\Z[G])$. Such an $x$ is then represented
by $(u_i)_i$ with $u_i\in    (\Z_p[\zeta_{p^{s_i}}]^*)^{(2)}$. For $m\geq 0$, consider the pro-$p$-Sylow subgroup
$ (\Z_p[\zeta_{p^{m}}]^*)_p$ of $\Z_p[\zeta_{p^{m}}]^*$. Denote by $(\widehat{\Z[\zeta_{p^m}]^*})_p$ the 
intersection $\widehat{\Z[\zeta_{p^m}]^*}\cap (\Z_p[\zeta_{p^{m}}]^*)_p$ of the $p$-adic 
closure of the global units $ \Z[\zeta_{p^m}]^*$ in $\Z_p[\zeta_{p^{m}}]^*$
with   $ (\Z_p[\zeta_{p^{m}}]^*)_p$. If $p\geq 5$, then, since  the second Bernoulli number $B_2=1/6$ is
 not divisible by $p$, we have
$$
(\Z_p[\zeta_{p^{m}}]^*)_p^{(2)}=(\widehat{\Z[\zeta_{p^m}]^*})_p^{(2)}
$$
by a classical result of Iwasawa (see for example \cite[Theorem 13.56]{Washington}, \emph{cf.} \cite[p. 296]{OliverD}).
This shows that $x$ is trivial in $D(\Z[G])$. If $p=3$, then 
since $3$ is regular, we  have as above
$\Cl(\Z[G])_p^{(2)}=
\Cl(\Z[G])_p^{(0)}=D(\Z[G])_p^{(0)}$. This group is annihilated by $A (G)$
by \cite[Theorem 9]{OliverD};  in this case, $A (G)=p^{n-1}=3^{n-1}$
by \cite{Lam}.
 \end{proof}
\end{para}
 
 \begin{para}
We can now   show  Theorems \ref{main1} and \ref{mainDiffthm} of the introduction.
For this we   allow $G$ to stand for an arbitrary finite group.
\smallskip

\noindent {\it Proof of Theorem \ref{main1}.} Using  Noetherian induction and the $0$-dimensional result of Taylor exactly as
in \cite[Prop. 4.4]{PaCubeInvent}, we see that 
\begin{equation}\label{Gtorsion}
{\rm gcd}(2, \#G)\cdot \#G\cdot f^\ct_*(\F)=0.
\end{equation}
Using localization as in \cite[Prop. 4.5]{PaCubeInvent}, 
we see that the $p$-power torsion part 
of the class $f^\ct_*(\F)$ is annihilated by any power of $p$ that 
annihilates its restriction ${\rm Res}_{G_p}^G( f^\ct_*(\pi^*\GG))$
in the class group $\Cl(\Z[G_p])$ of a $p$-Sylow $G_p$. By definition, this restriction is the
Euler characteristic class for the $G_p$-cover $X\to X/G_p$.
If $G$ is a $p$-group
of order $p\geq 5$, we have $f^\ct_*(\F)=0$ in $\Cl(\Z[G])$  by Theorem \ref{eigencor} and Proposition \ref{bernoulli}.  We can apply this to a $p$-Sylow $G_p$ of $G$ and the $G_p$-torsor $X\to X/G_p$. By the above, we obtain that the prime to $6$ part of 
 $f^\ct_*(\F)$ is trivial. By (\ref{Gtorsion}) the $2$-part is always annihilated
by ${\rm gcd}(2, \#G)^{v_2(\#G)+1}$. When the $2$-Sylow $G_2$ of $G$
is abelian, \cite[Theorem 1.1]{PaCubeInvent},  applied
to  the cover $X\to X/G_2$, shows that the restriction of $f^\ct_*(\F)$ to $\Cl(\Z[G_2])$ is $2$-torsion. In general,  by \cite[Theorem 1.1]{PaCubeInvent}, the restriction of 
$f^\ct_*(\F)$ to $\Cl(\Z[G_2])$ lies in the kernel subgroup $D(\Z[G_2])$.
The kernel subgroup $D(\Z[G_2])$ is trivial when the $2$-group $G_2$ has order $\leq 4$,
is cyclic of order $8$, or is dihedral;
it has order $2$ when $G_2$ is generalized quaternion or semi-dihedral
 (\cite[II, 50.16]{CurtisReiner}, \cite[Theorem 2.1, p. 79]{MJTclassgroups}).
Also, by Theorem \ref{eigencor} and Proposition \ref{bernoulli}, when $p=3$, the 
restriction of  $f^\ct_*(\F)$ to $\Cl(\Z[G_3])$ is annihilated by the Artin exponent $A(G_3)$.
If $G_3$ is abelian, this restriction is trivial, again by \cite[Theorem 1.1]{PaCubeInvent}.
Theorem \ref{main1} now follows.

\smallskip

\noindent {\it Proof of Theorem \ref{mainDiffthm}.} 
Let us note that in this situation, we have ${\rm H}^0(X_\Q, \O_{X_\Q})={\rm H}^0(Y_\Q, \O_{Y_\Q})$, and,
since the $G$-cover $X_\Q\to Y_\Q$ is unramified, the Hurwitz formula gives $g_X-h=\#G\cdot (g_Y-h)$.
The result then follows from Theorem \ref{main1} exactly as in the proof of   \cite[Theorem 5.5]{
PaCubeInvent} provided we show that ${\rm H}^1(X, \omega_X)\simeq \Z^{\oplus h}$.
Since $G$ acts trivially on ${\rm H}^0(X_\Q, \O_{X_\Q})$, we have ${\rm H}^0(X,\O_X)\simeq \Z^{\oplus h}$ with trivial $G$-action.
Under the rest of our assumptions, duality implies that there is a $G$-equivariant isomorphism 
${\rm H}^1(X, \omega_X)\simeq{\rm Hom}_{\Z}({\rm H}^0(X, \O_X),\Z)$, and the result 
then follows. 
\endproof
\end{para}

\section{Tamely ramified covers of curves}\label{curvesSec}

\subsection{Curves over $\Z$}
We assume that $G$ acts tamely on $f: X\to \Spec(\Z)$ which is  projective,  flat and a local complete intersection
of relative dimension $1$. We also assume that $Y=X/G$ is irreducible and regular; then $\pi: X\to Y$ is finite and flat.
\quash{Let $U$ be the largest open subscheme of $X$ such that $\pi: U\to V=U/G$ is \'etale. The complements $R(X/Y)=X-U$ and $B(X/Y)=Y-V$ are respectively 
the ramification and branch locus of $\pi$. The ramification locus is the
closed subset of $X$ defined by the annihilator ${\rm Ann}(\Omega^1_{X/Y})$. Our assumption of tameness implies that 
both the ramification and branch loci are fibral, \emph{i.e.} are subsets of the union
of  fibers of $X\to \Spec(\Z)$, resp. $Y\to \Spec(\Z)$, over a finite set $S$ of primes $(p)$.  
(See \cite[1.2]{CEPTAnnals}.)
We also assume that $f$ is a local complete intersection and that $\pi: X\to Y=X/G$ is flat.
We continue to assume that $X\to Y$ is as in the beginning of \ref{generalARRsec}. 
Also, we assume in addition that $X$ has relative dimension $d=1$, that $Y$ is regular,
and that $\l\neq 2$.}
Let $U$ be the largest open subscheme of $X$ such that $\pi: U\to V=U/G$ is \'etale. The complements $R(X/Y)=X-U$ and $B(X/Y)=Y-V$ are respectively 
the ramification and branch locus of $\pi$. The ramification locus is the
closed subset of $X$ defined by the annihilator ${\rm Ann}(\Omega^1_{X/Y})$. Our assumption of tameness implies that 
both the ramification and branch loci are fibral, \emph{i.e.} are subsets of the union
of  fibers of $X\to \Spec(\Z)$, resp. $Y\to \Spec(\Z)$, over a finite set $S$ of primes $(p)$.  
(See \cite[1.2]{CEPTAnnals}.)  Fix a prime $\l\neq 2$ that does not divide $\#G$.

\begin{para}  Denote by $F_i\Gr_0(Y)$ the subgroup of elements of $\Kr_0(Y)=\Gr_0(Y)$ 
represented as linear combinations of coherent sheaves supported on subschemes
of $Y$ of dimension $\leq i$. 
Consider the homomorphisms 
$$
\overline\chi: F_1\Gr_0(Y)\to \Cl(\Z[G]);\quad \overline\chi(c)=  f^{\ct}_*(\pi^*(c)), \ \ \hbox{\rm and,}
$$
$$
\cl_{X/Y}: F_1\Gr_0(Y)\to \Cl(\Zl[G])^\wedge;\quad \cl_{X/Y}(c):=f_*^\wedge(\pi^*(c)\cdot \theta^{\l}(T^\vee_{X/Y})^{-1})
$$
where $\theta^{\l}(T^\vee_{X/Y})^{-1}$ is as in \ref{Cotangent}. Note that a value of $\cl_{X/Y}$ appears in the right hand side of (\ref{723}).

\begin{prop}\label{vanishF1}
Under the above assumptions, 

1) the image of $\overline \chi$ is ${\rm gcd}(2, \#G)$-torsion,

2) the image of   $\cl_{X/Y}$ is $(\l-1)$-torsion.
\end{prop}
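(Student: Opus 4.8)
The plan is to reduce both statements to the $0$-dimensional Galois module theory of Fr\"ohlich and Taylor and to the vanishing of Euler characteristics of tame covers of schemes over finite fields.

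For (1), the group $F_1\Gr_0(Y)$ is generated by the classes $[i_*\O_T]$, where $i\colon T\hookrightarrow Y$ runs over the integral closed subschemes of dimension $\leq 1$; so it suffices to bound $\overline\chi([i_*\O_T])$ for such $T$. Since $\pi$ is flat, flat base change gives $\pi^*i_*\O_T=j_*\O_{\pi^{-1}(T)}$, where $j\colon\pi^{-1}(T)=X\times_Y T\hookrightarrow X$ is the preimage with its induced $G$-action; this action is tame (as in \ref{tamesetup}), so $\overline\chi([i_*\O_T])$ is the projective Euler characteristic $\chi(\pi^{-1}(T),\O_{\pi^{-1}(T)})$ of the cover $\pi^{-1}(T)\to T\to\Spec(\Z)$. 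If $T$ is fibral, say supported over a prime $p$, then $\pi^{-1}(T)$ is projective over $\F_p$ with tame $G$-action, and $\chi(\pi^{-1}(T),\O)=0$ in $\Cl(\Z[G])$ by the theorem of Nakajima \cite{NakajimaInv} (see \cite[Theorem 1.3.2]{CEPTAnnals}; the tame local structure of \cite{RaynaudLNM169} reduces the ramified case to the torsor case). Otherwise $T$ is horizontal, $T=\Spec(A)$ with $A$ an order in a number field $K$; passing to the normalization $\widetilde T=\Spec(\O_K)$ changes $[i_*\O_T]$ by a class supported on a fibral subscheme, whose $\overline\chi$ vanishes by the previous case, so we may assume $A=\O_K$. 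Then $\pi^{-1}(\widetilde T)=\Spec(B)$ is affine, hence $\overline\chi([i_*\O_{\widetilde T}])=[B]$ in $\Cl(\Z[G])$, where $B$ is finite and flat over $\Z$ and $G$-cohomologically trivial, so projective as a $\Z[G]$-module. Since $X_\Q\to Y_\Q$ is unramified, $B\otimes_\Z\Q$ is a $G$-Galois $\Q$-algebra over $K$ and $B$ is the ring of integers of a tamely ramified $G$-Galois algebra; by Taylor's proof of Fr\"ohlich's conjecture \cite{MJTFrohlichConj}, as recalled in the Introduction, $\gcd(2,\#G)\cdot[B]=0$. As all generators of $F_1\Gr_0(Y)$ have $\gcd(2,\#G)$-torsion image, (1) follows.

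For (2), fix $c\in F_1\Gr_0(Y)$ and write $\theta^{\l}(T^\vee_{X/Y})^{-1}=1+r^{\l}_{X/Y}$ as in \ref{Cotangent}, so that
\[
\cl_{X/Y}(c)=f^\wedge_*(\pi^*c)+f^\wedge_*(\pi^*c\cdot r^{\l}_{X/Y})
\]
in $\Cl(\Zl[G])^\wedge$. The first term is the image of $\overline\chi(c)$ under $\Cl(\Z[G])\to\Cl(\Zl[G])\to\Cl(\Zl[G])^\wedge$; by (1) it is $\gcd(2,\#G)$-torsion, and as $\l$ is odd, $\gcd(2,\#G)$ divides $\l-1$, so this term is $(\l-1)$-torsion. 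For the second term, $T^\vee_{X/Y}$ is the class of the cotangent complex $L_{X/Y}$, which vanishes over the largest open $U\subset X$ on which $\pi$ is \'etale; hence $T^\vee_{X/Y}$, and therefore also $r^{\l}_{X/Y}=\theta^{\l}(T^\vee_{X/Y})^{-1}-1$, lies in the image of $\Gr_0(G,R')^\wedge\to\Gr_0(G,X')^\wedge$, where $R'=R(X/Y)\cap X'$ is the ramification locus, which by tameness is a finite disjoint union of projective schemes over finite fields $\F_p$ with $p\neq\l$, each with tame $G$-action. Thus $f^\wedge_*(\pi^*c\cdot r^{\l}_{X/Y})$ factors through the completion of the Euler characteristic homomorphism $\Gr_0(G,R')\to\Cl(\Zl[G])$, which is the zero map by the finite-field vanishing used above; hence this term is $0$, and $\cl_{X/Y}(c)$ is $(\l-1)$-torsion.

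The main difficulty is the horizontal case of (1): one has to recognize $\pi^{-1}(\widetilde T)$ as the spectrum of the ring of integers of a tamely ramified $G$-Galois algebra over $\O_K$ --- using the flatness of $\pi$, the regularity of $Y$ and the unramifiedness of $X_\Q\to Y_\Q$ --- and to handle carefully the fibral corrections introduced by passing to a normalization, so as to be in position to apply Taylor's theorem. The rest is formal bookkeeping with the localization sequences and the $I_G$-adic completions.
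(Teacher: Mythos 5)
Your decomposition of $F_1\Gr_0(Y)$ into classes of integral subschemes $T\subset Y$, splitting into fibral curves (handled by Nakajima) and horizontal ones (handled by Taylor), is a genuinely different route from the paper. The paper instead uses Riemann--Roch without denominators to identify $F_1\Gr_0(Y)/F_0\Gr_0(Y)\cong\Pic(Y)$ and $F_0\Gr_0(Y)\cong\mathrm{CH}_0(Y)$, handles $\mathrm{CH}_0(Y)$ by two-dimensional class field theory (Kato--Saito plus a well-chosen good reduction prime $q\notin\{\l\}\cup S$), and reduces $\Pic(Y)$ via the ``harmless base extension'' of \cite[Prop.\ 9.1.3]{CEPTAnnals} to sums of horizontal divisors $D_i$ in $Y_{\O_N}$ that meet the fibral locus transversely at regular points of the reduced special fibers. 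Your part (2), splitting $\cl_{X/Y}(c)=f_*^\wedge(\pi^*c)+f_*^\wedge(\pi^*c\cdot r^\l_{X/Y})$ and killing the second term via the ramification locus, is also different from the paper's treatment, which again passes through harmless base extension and the $d=0$ ARR identity; your route would avoid the class field theory input entirely.

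However, there is a real gap in the horizontal case of part (1). You normalize the base curve $T$ to $\widetilde T=\Spec(\O_K)$, but this does \emph{not} normalize the total space of the cover: $\pi^{-1}(\widetilde T)=\Spec(B)$ with $B=\O_K\otimes_{\O_Y}\pi_*\O_X$ is finite flat over $\O_K$ and its generic fiber $B_\Q$ is indeed a $G$-Galois $K$-algebra, but $B$ need not be integrally closed --- it is a subring of the integral closure $B'$ of $\O_K$ in $B_\Q$, not equal to it. (Recall $X$ is assumed only to be a local complete intersection, not normal, and even when $X$ is regular the fiber product $X\times_Y\widetilde T$ can acquire singularities where $\widetilde T$ meets the branch locus or the singular locus of the special fibers non-transversely.) So the claim ``$B$ is the ring of integers of a tamely ramified $G$-Galois algebra'' is unjustified, and Taylor's theorem \cite{MJTFrohlichConj} does not apply directly to $[B]$. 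This is precisely what the paper's use of \cite[Prop.\ 9.1.3]{CEPTAnnals} is for: the transversality conditions on the $D_i$ are engineered so that $\widetilde{\mathcal D}_i=\pi^{-1}(\mathcal D_i)$ is \emph{regular}, hence $\Gamma(\widetilde{\mathcal D}_i,\O)$ is the maximal order and Taylor applies. Your argument could in principle be repaired by also normalizing $\pi^{-1}(\widetilde T)$, writing $[B]=[B']-[B'/B]$, noting that $B'/B$ is a $G$-c.t. coherent sheaf supported on a fibral subscheme of the tame $G$-scheme $\Spec(B)$, and invoking Nakajima once more to kill $[B'/B]$ in $\Cl(\Z[G])$; but that step is both missing and nontrivial, and it would also require checking that $B'$ remains $G$-c.t. (Noether's theorem for the tame extension $B'/\O_K$), which itself needs an argument that tameness of $X\to Y$ forces tameness of $\Spec(B')\to\widetilde T$. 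As written, the proof of part (1) is incomplete, and since part (2) relies on part (1) to control the term $f_*^\wedge(\pi^*c)$, it is incomplete too.
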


\begin{proof} 
We note that  under our assumptions, we have isomorphisms ${\rm Pic}(Y)={\rm CH}_1(Y)\xrightarrow{\sim} F_1\Gr_0(Y)/F_0\Gr_0(Y)$
and ${\rm CH}_0(Y)\xrightarrow{\sim} F_0\Gr_0(Y)$.
(This follows from ``Riemann-Roch without denominators" as in \cite{SouleCanadian}, see 
also \cite[Ex. 15.3.6]{FultonInter}).  Here ${\rm CH}_i(Y)$ is the 
Chow group of  dimension $i$ cycles modulo rational equivalence on $Y$ and
both maps are given by sending the class $[V]$ of a  dimension $1$, resp. $0$,
integral subscheme $V$ of $Y$ 
to $i_*([\O_V])$ where $i: V\to Y$ is the corresponding morphism.

Now observe:

a) Both $\cl_{X/Y}$, $\overline\chi$ are trivial on $F_0\Gr_0(Y)$: 
By two-dimensional class field theory \cite[Theorem 2]{KatoSaitoAnnals}, ${\rm CH}_0(Y)$ is a finite abelian group 
and there is a reciprocity isomorphism ${\rm CH}_0(Y)\xrightarrow{\sim} \tilde\pi_1^{\rm ab}(Y)$,
where $\tilde\pi_1^{\rm ab}(Y)$ classifies unramified abelian covers of $Y$ that split completely over all real-valued
 points  of $Y$. Suppose $Y'\to Y$ is an irreducible unramified abelian Galois cover. A standard argument using the classical description of unramified abelian covers of curves via
 isogenies of their Jacobians (or, alternatively, smooth base change for \'etale cohomology), shows that there is an infinite set of primes $q$ such that the base change
 $Y'_{{\mathbb F}_q}\to Y_{{\mathbb F}_q}$ is non-split, \emph{i.e.} such that $Y'_{{\mathbb F}_q}$ is irreducible. 
 By applying this to the universal cover $Y^{\rm uni}\to Y$ with Galois group ${\rm CH}_0(Y)$, we obtain that there is a prime $q$ not in $\{\l\}\cup S$ 
 such that $Y_{{\mathbb F}_q}$ is smooth and with the property that $Y^{\rm uni}_{{\mathbb F}_q}$ is irreducible. 
 This implies that the Frobenius elements of the closed points of the smooth projective curve $Y_{{\mathbb F}_q}$ generate the Galois group, 
in other words, that the group ${\rm CH}_0(Y)$ is generated by the classes of points that are supported on $Y_{{\mathbb F}_q}\subset Y$.  
Now $\theta^{\l}(T^\vee_{X/Y})^{-1}_{|U}=1$ and since $ X_{{\mathbb F}_q} \subset U$, if $c$ corresponds to   a point on $Y_{{\mathbb F}_q}$,
we obtain $\cl_{X/Y}(c)=f_*^{\rm ct}(\pi^*(c))=0$
by the normal basis theorem for the $G$-Galois algebra that corresponds to $\pi^{-1}(c)$.
(See also \cite[Theorem 1.3.2]{CEPTAnnals}).

b) By (a) above, $\cl_{X/Y}$ and $\overline\chi$ both factor through $F_1\Gr_0(Y)/F_0\Gr_0(Y)\simeq {\rm Pic}(Y)$.
Suppose now that $\delta\in \Pic(Y)$.
By \cite[Prop. 9.1.3]{CEPTAnnals} (the assumption that the special fibers are divisors
with normal crossings is not needed for this), there is a ``harmless" base extension given
by a number field $N/\Q$, unramified at all primes over $S$, of degree $[N:\Q]$ a power of a prime number $\neq \l$,
and $[N:\Q]\equiv 1\mod \#\Cl(\Z[G])$ such that the following is true: We can write the base change 
 $\delta_{\O_N}\in {\rm Pic}(Y_{\O_N})$ 
as a sum $\delta_{\O_N}=\sum_i m_i [D_i]$
with $m_i=\pm 1$, where $D_i$ are horizontal divisors in $Y_{\O_N}$, which at most
intersect  each irreducible component  of $(Y_{\O_N})_p$, $p\in S$, 
transversely at closed points that
are away from the singular locus of the reduced special fiber  
$(Y_{\O_N})_p^{\rm red}$. Denote 
 by $\iota_i: D_i\hookrightarrow Y_{\O_N}$ the closed immersion
and by $\mathcal {D}_i$ the normalization of $D_i$. Then, we can see as in \emph{loc. cit.} that for each $i$, the morphism $\ti \DD_i=\pi^{-1}(\DD_i)=\DD_i\times_Y X\to \DD_i$
is a tame $G$-cover of regular affine schemes of dimension $1$   flat over $\Z$,
which is unramified away from $S$. The normalization morphism $q_i: \DD_i\to D_i$
is an isomorphism over an open subset of $D_i$ that contains all primes over $S$. 
As in \cite{CEPTAnnals}, we can now see that our conditions 
on the field $N$ together with  $\delta_{\O_N}=\sum_i m_i[D_i]$ 
imply that  
$$
\overline\chi (\delta)=\sum\nolimits_i m_i\cdot \overline\chi(X_{\O_N}, (\iota_i)_*\O_{D_i} )=\sum\nolimits_i m_i\cdot [\Gamma( \ti \DD_i, \O_{\ti \DD_i})]
$$
in $\Cl(\Z[G])$. Observe that the classes $[\Gamma( \ti \DD_i, \O_{\ti \DD_i})]
$
are ${\rm gcd}(2, \#G)$-torsion by Taylor's theorem and part (1) follows.

The proof of part (2) is similar: For simplicity, 
write $D=D_i$, $\DD=\DD_i$, $\ti \DD=\ti \DD_i$, $\iota: D\hookrightarrow Y_{\O_N}$, and 
denote by $ h: \ti \DD\to \Spec(\Z)$ the structure morphism. 
By Taylor's theorem, ${\rm gcd}(2, \#G)\cdot   h^\ct_*(\O_{\ti \DD})=0$ in $\Cl(\Zl[G])$. Therefore, since $\l-1$ is even, by applying (\ref{723}) 
(for $d=0$) to the cover $X=\ti \DD\to Y= \DD$, we obtain
that 
$$
(\l-1)\cdot \left[h^\wedge_*( \theta^{\l}(T^\vee_{\ti\DD/\DD})^{-1})+h^\wedge_*(\pi^*(c')\cdot \theta^{\l}( T^\vee_{\ti\DD/\DD})^{-1})\right]=0
$$
in $\Cl(\Zl[G])^\wedge$ where $c'\in \Kr_0(\DD )'=\Gr_0(\DD )'$ is supported on a proper closed subset of $\DD$.
We can assume that $c'$ is supported away from primes in $S$. Then, as in (a) above 
we obtain
that the second term in the above sum vanishes. Therefore,  
$$
(\l-1)\cdot (h^\wedge_*( \theta^{\l}(T^\vee_{ \ti \DD/\DD})^{-1}))=0
$$
also.  
Observe that $ q:\DD\to D$ is an isomorphism over an open subset of $\DD$
whose complement has image in $X$ disjoint from the support of $T^\vee_{X/Y}$.
Also the formation
of  cotangent complexes of  flat morphisms commutes with base change (\cite{IllusieCotangent}), 
we can   see that
 the base change of $\theta^{\l}(T^\vee_{X/Y})^{-1} \in \Kr_0(G, X )^\wedge$ to $ \ti \DD$
is equal to $\theta^{\l}(T^\vee_{ \ti \DD /\DD })^{-1}\in \Kr_0(G, \ti \DD)^\wedge$.
Using these two facts, and the projection formula, we now obtain
\begin{eqnarray*}
(\l-1)\cdot \cl_{X_{\O_N}/Y_{\O_N}}(\iota_*[ \O_{D}]) 
=\ \ \ \ \ \ \ \ \ \ \ \ \ \ \ \ \ \ \ \ \ \ \ \ \ \ \ \ \ \ \ \ \ \\
=(\l-1)\cdot f^\wedge_*(\pi^*(\iota_*[\O_{D}])\cdot \theta^{\l}(T^\vee_{X_{\O_N}/Y_{\O_N}})^{-1})= \\
 =(\l-1)\cdot f^\wedge_*(\pi^*(\iota_*[ q_* \O_{\DD}])\cdot \theta^{\l}(T^\vee_{X_{\O_N}/Y_{\O_N}})^{-1})=\\
=(\l-1)\cdot h^\wedge_*( \theta^{\l}(T^\vee_{ \ti \DD /\DD })^{-1})=0.
\end{eqnarray*}
Here, for simplicity, we also write $\pi$ for the cover $ X_{\O_N}\to Y_{\O_N}$
and  denote by $f$  the structure morphism $X_{\O_N}\to \Spec(\Z)$.
As above we can now see that we have
$$
\cl_{X/Y}(\delta)=\sum\nolimits_i m_i\cdot \cl_{X_{\O_N}/Y_{\O_N}}((\iota_i)_*[\O_{D_i}])
$$
 and this, together with the above, concludes the proof of part (2).
\end{proof}

\begin{cor}\label{corvanishF1}
Under the above assumptions, if $\GG$ is a locally free coherent $\O_Y$-module of rank $r$,
then 
$$
{\rm gcd}(2, \#G)\cdot ( \overline\chi(X, \pi^*\GG)-r\cdot \overline\chi(X, \O_X))=0
$$
in $\Cl(\Z[G])$.
\end{cor}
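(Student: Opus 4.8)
The plan is to exhibit the class $\overline\chi(X,\pi^*\GG)-r\cdot\overline\chi(X,\O_X)$ as a value of the homomorphism $\overline\chi\colon F_1\Gr_0(Y)\to\Cl(\Z[G])$ introduced in Proposition~\ref{vanishF1}, and then to apply part~(1) of that proposition. First I would record that, since $X\to\Spec(\Z)$ has relative dimension $1$ and $\pi$ is finite, $Y\to\Spec(\Z)$ is projective and flat of relative dimension $1$; as $Y$ is assumed irreducible and is flat over $\Z$, it is integral of dimension $2$, with generic point $\eta$. The dimension filtration on $\Gr_0(Y)$ then satisfies $F_1\Gr_0(Y)=\ker\bigl(\Gr_0(Y)\to\Gr_0(k(\eta))=\Z\bigr)$, the last arrow being the generic rank; this is the standard exactness of the coniveau (localization) sequence in $\Gr$-theory already used above in the discussion of ``Riemann--Roch without denominators''.

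Next, since $\GG$ is locally free of rank $r$, the class $c:=[\GG]-r\cdot[\O_Y]$ has generic rank $0$, hence $c\in F_1\Gr_0(Y)$. Because $\pi\colon X\to Y$ is flat, pull-back of coherent sheaves is exact, so $\pi^*[\GG]=[\pi^*\GG]$ and $\pi^*[\O_Y]=[\O_X]$ in $\Gr_0(G,X)$; therefore
$$
\overline\chi(c)=f^\ct_*(\pi^*c)=\overline\chi(X,\pi^*\GG)-r\cdot\overline\chi(X,\O_X)
$$
in $\Cl(\Z[G])$. Applying Proposition~\ref{vanishF1}(1) to $c\in F_1\Gr_0(Y)$ shows that this class is annihilated by $\mathrm{gcd}(2,\#G)$, which is exactly the assertion of Corollary~\ref{corvanishF1}.

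I do not expect any real obstacle beyond Proposition~\ref{vanishF1} itself, which does all the work. The only point to be careful about is the elementary one that a rank-$r$ locally free sheaf has the same generic rank as $r$ copies of $\O_Y$, so that $c$ genuinely lands in the first step $F_1\Gr_0(Y)$ of the filtration on which $\overline\chi$ has been shown to be $\mathrm{gcd}(2,\#G)$-torsion; everything else is bookkeeping with the flatness of $\pi$ and the definition of $\overline\chi(X,-)$.
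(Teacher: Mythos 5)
Your proof is correct and is essentially the paper's own proof: the paper also reduces immediately to Proposition~\ref{vanishF1}(1) by observing that $[\GG]-r[\O_Y]\in F_1\Gr_0(Y)$. The extra details you fill in (that $F_1\Gr_0(Y)$ is the kernel of the generic-rank map on the $2$-dimensional integral scheme $Y$, and that flatness of $\pi$ makes $\pi^*c$ the expected class) are exactly the routine verifications left implicit in the paper.
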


\begin{proof}
This  follows from  Proposition \ref{vanishF1} (1), since $[\GG]-r\cdot [\O_Y]\in F_1\Gr_0(Y)$.
\end{proof}
\end{para}

\begin{subsection}{The input localization theorem}
 
Here we let $S$  be the smallest finite set of rational primes which
contains the support of the branch locus of $\pi: X\to Y$.
For simplicity, let us set   $X_S=\cup_{p\in S} X_{{\mathbb F}_p}$,
  $\widehat X_S=\cup_{p\in S} X_{\Z_p}$.

\begin{thm}\label{thminput}
Let $\pi: X\to Y$ be a tamely ramified $G$-cover of schemes which are projective and
flat over $\Spec(\Z)$ of relative dimension $1$. Suppose that $Y$ is regular and that $X$ is a local complete
intersection. Let $\F$ be a $G$-equivariant coherent   $\O_X$-module. Then 
$$
{\rm gcd}(2, \#G)^{v_2(\#G)+2}{\rm gcd}(3, \#G)^{v_3(\#G)-1}\cdot
\overline\chi(X, \F)
$$
in $\Cl(\Z[G])$ depends only on  the pair $(\widehat X_S, \F|_{\widehat X_S})$
where $\F|_{\widehat X_S}$ denotes
the pull-back of $\F$ from $X$ to $\widehat X_S$. \end{thm}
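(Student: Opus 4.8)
The plan is to reduce the statement to the case $\F=\O_X$ and then to bound, prime by prime, each primary part of $\overline\chi(X,\O_X)$ in terms of the local data $\widehat X_S$. First I would perform a dévissage on $X$: let $U\subseteq X$ be the largest open subscheme over which $\pi$ is \'etale, with complement the ramification locus $R=R(X/Y)$, which by tameness is fibral and contained in $X_S$. Combining the localization sequence $\Gr_0(G,R)\xrightarrow{i_*}\Gr_0(G,X)\to\Gr_0(G,U)\to 0$, \'etale descent $\Gr_0(G,U)\cong\Gr_0(U/G)$ and the surjection $\Gr_0(Y)\twoheadrightarrow\Gr_0(U/G)$, one writes $[\F]=\pi^*[\GG]+i_*[\mathcal K]$ in $\Gr_0(G,X)$ with $\GG$ coherent on $Y$ of generic rank $r$ (the generic rank of $\F$, so $[\GG]-r[\O_Y]\in F_1\Gr_0(Y)$) and $\mathcal K$ coherent on $R$. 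Applying $f^\ct_*$ and Proposition \ref{vanishF1}(1) (which also shows the $\GG$-ambiguity in this decomposition is ${\rm gcd}(2,\#G)$-torsion) gives
\[
{\rm gcd}(2,\#G)\cdot\big(\overline\chi(X,\F)-r\cdot\overline\chi(X,\O_X)\big)={\rm gcd}(2,\#G)\cdot (f\circ i)^\ct_*[\mathcal K],
\]
whose right-hand side depends only on $(\widehat X_S,\F|_{\widehat X_S})$, since $\mathcal K$ together with its structure morphism to $\Spec(\Z)$ is recovered from $\F$ on a neighbourhood of $R$ inside $\widehat X_S$, and the push-forward to $\Spec(\Z)$ of a class supported over the primes of $S$ is computed fibrewise over $S$. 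It remains to show that ${\rm gcd}(2,\#G)^{v_2(\#G)+1}{\rm gcd}(3,\#G)^{v_3(\#G)-1}\cdot\overline\chi(X,\O_X)$ depends only on $\widehat X_S$; and, as restriction to a $p$-Sylow $G_p$ is injective on $p$-primary parts ($\Ind\circ\Res=[G:G_p]$ is prime to $p$) and sends $\overline\chi(X,\O_X)$ to the corresponding class for the cover $X\to X/G_p$, I may assume $G$ is a $p$-group for the rest of the argument.

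\textbf{The primes $2$ and $3$.} For these, $\Cl(\Z[G])_p$ is already bounded torsion. Since $2$ and $3$ are regular, $p\nmid h(\Q(\zeta_{p^m}))$ for all $m$, hence $\Cl(\Q(\zeta_{p^m}))_p=0$ (see e.g.\ \cite{Washington}); via the decomposition $\Q[G]=\prod_i{\rm Mat}_{n_i}(\Q(\zeta_{p^{s_i}}))$ (\cite{Roquette}) this forces the map $\Cl(\Z[G])\to\Cl(\M_G)=\prod_i\Cl(\Q(\zeta_{p^{s_i}}))$ to kill the $p$-part, i.e.\ $\Cl(\Z[G])_p=D(\Z[G])$. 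For $p=3$, $D(\Z[G])$ is annihilated by the Artin exponent $A(G)\mid 3^{v_3(\#G)-1}$ (\cite{OliverD}, \cite{Lam}); for $p=2$, $D(\Z[G])$ is annihilated by $2^{v_2(\#G)+1}$ (part of the information on kernel groups recalled in the proof of Theorem \ref{main1}). Hence for $p\in\{2,3\}$ the $p$-part of $\overline\chi(X,\O_X)$ is already killed by ${\rm gcd}(2,\#G)^{v_2(\#G)+1}{\rm gcd}(3,\#G)^{v_3(\#G)-1}$, so the corresponding multiple is trivially a function of $\widehat X_S$; no Adams--Riemann--Roch input is needed here.

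\textbf{The primes $p\ge 5$.} Now $\Cl(\Z[G])_p$ need not be bounded torsion, and I would use the Adams--Riemann--Roch identity (\ref{723}) with $d=1$. By Proposition \ref{vanishF1}(2) the term $(\l-1)\cl_{X/Y}(c_\l)$ vanishes, leaving
\[
(\l-1)(\psi^\l-\l^{-1})\cdot\overline\chi(X,\O_X)=\l^{-1}(\l-1)\cdot f^\wedge_*\big(\theta^\l(T^\vee_{X/Y})^{-1}-1\big)=:L(X)
\]
in $\Cl(\Zl[G])^\wedge=\Cl(\Zl[G])_p$. Because $T^\vee_{X/Y}$ has rank $0$ and vanishes on $U$, the class $\theta^\l(T^\vee_{X/Y})^{-1}-1$ lies in the ideal of $\Kr_0(G,X)^\wedge$ of classes supported on $R\subseteq X_S$, and the same local-factoring argument as in the first reduction shows $L(X)$ depends only on $\widehat X_S$. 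Choose $\l$ as in Proposition \ref{cft}, with $\l\ge 5$, so that $\Cl(\Z[G])_p\xrightarrow{\sim}\Cl(\Zl[G])_p$ and $\l$ is a Teichm\"uller element of order $p-1$ in $(\Z/p^N\Z)^*$, and pick a prime $\l'$ with $\l\l'$ congruent to $1$ modulo $p^N$. By Propositions \ref{adamsonfree} and \ref{KockvsCNT}, $\psi^\l=\l\cdot\psi^{\CNT}_{\l'}$ on $\Cl(\Zl[G])_p$, so on the Teichm\"uller eigenspace $\Cl(\Z[G])_p^{(i)}$ the operator $(\l-1)(\psi^\l-\l^{-1})$ is multiplication by $(\l-1)\l^{-1}(\l^2\omega(\l)^{-i}-1)$, which is a $p$-adic unit exactly when $i$ is not congruent to $2$ modulo $p-1$, i.e.\ exactly when $i\neq 2$. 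Since $\Cl(\Z[G])_p^{(2)}=0$ by Proposition \ref{bernoulli}, the operator $(\l-1)(\psi^\l-\l^{-1})$ is invertible on $\Cl(\Z[G])_p$, and therefore $\overline\chi(X,\O_X)_p$ is the image of $L(X)$ under its (fixed, $X$-independent) inverse; hence the $p$-part of $\overline\chi(X,\O_X)$ depends only on $\widehat X_S$, with no extra multiplier.

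\textbf{Conclusion and the main obstacle.} Combining the three cases, ${\rm gcd}(2,\#G)^{v_2(\#G)+1}{\rm gcd}(3,\#G)^{v_3(\#G)-1}\cdot\overline\chi(X,\O_X)$, and hence by the first reduction ${\rm gcd}(2,\#G)^{v_2(\#G)+2}{\rm gcd}(3,\#G)^{v_3(\#G)-1}\cdot\overline\chi(X,\F)$, depends only on $(\widehat X_S,\F|_{\widehat X_S})$, as asserted. The step I expect to be the main obstacle is making rigorous that $L(X)$ — and likewise $(f\circ i)^\ct_*[\mathcal K]$ — genuinely depends only on the local data: one must use the functoriality of the Euler-characteristic constructions of \S2 under the inclusions $X_{\Z_p}\hookrightarrow X$, identify $\theta^\l(T^\vee_{X/Y})^{-1}-1$ with the push-forward in $\Kr_0(G,X)^\wedge$ of a class carried by the ramification locus $R$ (because $\pi$ is \'etale on $U$), and observe that $R\to\Spec(\Z)$ factors through the finite set of closed points $\{\Spec({\mathbb F}_p):p\in S\}$, so that all push-forwards in question are computed fibrewise over $S$. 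The precise bookkeeping of the $2$- and $3$-primary constants — in particular the exact annihilator of $D(\Z[G_2])$ — is a secondary delicate point, but it only repackages facts already established for Theorem \ref{main1}.
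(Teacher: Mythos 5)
Your $p\ge 5$ case is essentially correct and, in fact, phrased more cleanly than the paper: observing that $(\l-1)(\psi^\l-\l^{-1})$ is invertible on $\Cl(\Z[G])_p$ once $\Cl(\Z[G])_p^{(2)}=0$ is a nice way to record the conclusion. But the other two prongs of your trident have real problems, and the localization step — which you correctly flag as the main obstacle — is exactly where the paper's Proposition~\ref{vanProp} does heavy lifting that your sketch does not reproduce.

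For $p=2$ your reduction rests on a Roquette decomposition $\Q[G_2]\cong\prod_i{\rm Mat}_{n_i}(\Q(\zeta_{2^{s_i}}))$, but Roquette's theorem only gives Schur index $1$ for $p$-groups with $p$ \emph{odd}; for $2$-groups, totally definite quaternion algebras genuinely occur (already for $Q_8$). So the claim $\Cl(\Z[G_2])_2 = D(\Z[G_2])$ is unsupported, and even granting it, the asserted annihilation of $D(\Z[G_2])$ by $2^{v_2(\#G)+1}$ is not one of the facts recalled in the proof of Theorem~\ref{main1} — those give precise information only for a short list of $2$-groups. The paper sidesteps this entirely: it shows, via the injection $\pi^*\pi_*\O_X\hookrightarrow{\rm Maps}(G,\O_X)$ with cokernel supported on $R(X/Y)$ and Corollary~\ref{corvanishF1}, that ${\rm gcd}(2,\#G)\cdot\#G\cdot\overline\chi(X,\O_X)$ depends on $\widehat X_S$; it never needs to assert this class is zero. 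For $p=3$ your argument has the same defect in different clothes: Oliver's Theorem~9 annihilates $D(\Z[G])^{(0)}_p$ by $A(G)$, not the whole kernel group. The $(1)$-eigenspace is a priori nonzero and \emph{is} handled in the paper by the Adams--Riemann--Roch identity, precisely the input you announce as unnecessary for $p\in\{2,3\}$.

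Finally, the statement that ``$L(X)$ depends only on $\widehat X_S$ by the same local-factoring argument'' conceals the key difficulty. The class $r^\l_{X/Y}=\theta^\l(T^\vee_{X/Y})^{-1}-1$ dies upon restriction to $U$, hence in the localization sequence it is carried by $\Gr_0(G,X_S)$ — but only up to the image of $\Gr_1(G,U)$. After base change to $\widehat X_S$, that ambiguity lands in the image of $\Gr_1(G,\widehat U_S)^\wedge$, and one must show $(f_S)_*$ annihilates it. This is the content of Proposition~\ref{vanProp} and requires the commutative diagram comparing pushforward with the boundary map plus an explicit character computation on $\Gr_1(Y_{\Q_q})$ (constants and closed-point contributions via the normal basis theorem). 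Without this, there is no well-defined ``fibrewise'' value of $f^\wedge_*(r^\l_{X/Y})$ computed from $\widehat X_S$; the analogy with the honest sheaf $\mathcal K$ on $R$ (where the pushforward really is fibrewise) does not carry over. Your proposal does not fill this gap.
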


 \begin{proof} 
 We consider the projections
 $\overline\chi(X, \F)_\rho$  on the localizations 
 $ \Cl(\Z[G])_\rho$ of 
 the finite $\Gr_0(\Z[G])$-module $\Cl(\Z[G])$ at the maximal ideals $\rho\subset \Gr_0(\Z[G])$.
 Recall
 $$
  \Cl(\Z[G])=\oplus_\rho \Cl(\Z[G])_\rho.
 $$
 Consider $\rho$ that do not contain the kernel $I_G$ 
 of the rank map. The projection 
 $\overline\chi(X, \F)_\rho$ depends only on the inverse image $(\iota_*)_\rho^{-1}([\F])$ of the class
 of $\F$ under the isomorphism (\cite[Theorem 6.1]{CEPTRR})
 \begin{equation}
(\iota_*)_\rho:  \Gr_0(G, X^\rho)_\rho\xrightarrow{\sim} \Gr_0(G, X)_\rho
 \end{equation}
 where $\iota: X^\rho\subset X$. For such $\rho$, the fixed point subscheme $X^\rho $ is contained in the
 ramification locus $R=R(X/Y)\subset X_S$ and there is a similar isomorphism 
  \begin{equation}
(\widehat\iota_*)_\rho:  \Gr_0(G, X^\rho)_\rho\xrightarrow{\sim} \Gr_0(G, \widehat X_S)_\rho
 \end{equation}
 such that  $(\iota_*)_\rho$ and $(\widehat\iota_*)_\rho$  commute with   the base change
 homomorphism  $\Gr_0(G, X)\to \Gr_0(G, \widehat X_S)$,
 $\F\mapsto \F|_{\widehat X_S}$. This shows that $(\iota_*)_\rho^{-1}([\F])$
 and therefore also $\overline\chi(X, \F)_\rho$, for $I_G\not\subset\rho $, 
 only depends on   $(\widehat X_S, \F | _{\widehat X_S})$. It remains to deal with maximal $\rho$ 
 such that $I_G\subset \rho$. These are of the form $\rho=\rho_{(1, p)}$ for some prime $p$ that divides $\# G$. The argument in the proof of \cite[Proposition 4.5]{PaCubeInvent}
 shows that for such $\rho$, the component $\overline\chi(X, \F)_\rho$
 depends only on the $p$-power part $\overline\chi(X, \F)_p$ of the restriction of $ \overline\chi(X, \F)$ to the $p$-Sylow $G_p$. 
 To avoid a conflict in the notation we will use   in this proof the symbol $q$ to denote a prime in the set $S$.
   
  Note that, under our assumptions, $\pi: X\to Y$ is finite and flat. If $\GG=(\pi_*(\F))^G$, there is a 
 canonical short exact sequence of $G$-equivariant coherent $\O_X$-modules  
 $$
 0\to \pi^*\GG\to \F\to {\mathscr Y}\to 0,
 $$
 with ${\mathscr Y}$ supported on the ramification  locus $R(X/Y)$. 
 In fact, $\mathscr Y$ is canonically isomorphic to the cokernel of $\pi^*\GG |_{\widehat X_S}\to \F|_{\widehat X_S}$
 and $\GG |_{\widehat X_S}=(\pi_*(\F |_{\widehat X_S}))^G$. This shows that 
 ${\mathscr Y}$ is determined from $\F |_{\widehat X_S}$. Therefore, it is enough to show
 the statement for sheaves of the form $\F=\pi^*\GG$. In view of Corollary \ref{corvanishF1},
we first consider 
   the case $\F=\O_X$. Notice that $\pi_*\O_X$ is $\O_Y$-locally free
 of rank $\#G$ on $Y$ and hence, again by  Corollary \ref{corvanishF1}, the difference 
 $$
 \overline\chi(X, \pi^*(\pi_*(\O_X)))-\#G\cdot \overline\chi(X, \O_X)
 $$ is ${\rm gcd}(2,\#G)$-torsion. On the other hand, the $G$-action morphism, $m: X\times G\to X\times_Y X$, $(x, g)\mapsto (x, x\cdot g)$,
restricts to an isomorphism over   $U$. This gives a 
 $G$-equivariant  homomorphism
 $$
 \pi^*(\pi_*(\O_X))=\pi_*(\O_X)\otimes_{\O_Y} \O_X\xrightarrow{m^*}\bigoplus_{g\in G}\O_X={\rm Maps}(G,\O_X) 
 $$
 of $G$-equivariant coherent $\O_X$-modules which is injective.  The cokernel of $m^*$ is supported on $R(X/Y)$, and, as above,
is determined from $\widehat X_S$. Since  we have
 $\overline\chi(X, {\rm Maps}(G,\O_X) )=0$,
 this shows that ${\rm gcd}(2,\#G)\cdot (\#G)\cdot \overline\chi(X, \O_X)$ depends only on $\widehat X_S$. Therefore, ${\rm gcd}(2,\#G)^{v_2(\#G)+1}\cdot \overline\chi(X, \O_X)_2$ depends only on $\widehat X_S$. By Corollary \ref{corvanishF1} and the above, 
we see that
${\rm gcd}(2, \#G)^{v_2(\#G)+2} \cdot
\overline\chi(X, \F)_2$ depends only on $(\widehat X_S, \F|_{\widehat X_S})$.
 
 We now deal with the $p$-power part $\overline\chi(X, \O_X)_p$ for $p$ odd.
By the above, it is enough to consider the case that $G$ is a $p$-group.
We claim that, in this case,  the $I_G$-adic completion
$\Cl(\Zl[G])^\wedge$ is the $p$-power part $\Cl(\Zl[G])_p$: Indeed, we observe that the class  $[\Zl[G]]\in \Gr_0(\Z[G])$ annihilates $\Cl(\Zl[G])$ but
since it has rank $\#G$, it is invertible in the localizations 
of $\Gr_0(\Z[G])$ at  $\rho=I_G+(q)$, for all $q\neq p$.
Hence, the completion $\Cl(\Zl[G])^\wedge$ is supported at $p$ and the claim follows
since, by \ref{idealspara}, the only  prime ideal of $\Gr_0(\Z[G])$ supported 
over $p$ is $I_G+(p)$.
(cf. the proof of \cite[Proposition 4.5]{PaCubeInvent}). 
Combining   Proposition \ref{vanishF1} (2) 
and (\ref{723}) we obtain:
\begin{equation}\label{finalARR}
(\l-1) (\psi^\l-\l^{-1})\cdot \overline\chi(X, \O_X)=\l^{-1}(\l-1)   \cdot f^\wedge_*(r_{X/Y}^{\l})\ \ \ \ 
\end{equation}
in $\Cl(\Zl[G])^\wedge$. Now apply (\ref{finalARR})  to a prime $\l$ as in Proposition \ref{cft}. We  see that 
 Proposition \ref{bernoulli} implies that the multiple ${\rm gcd}(3, \#G)^{v_3(\#G)-1}\cdot \overline\chi(X, \O_X)_p$ is determined by 
 $f^\wedge_*(r^{\l}_{X/Y})\in \Cl(\Zl[G])^\wedge$. 
 
 We will show that $f^\wedge_*(r^{\l}_{X/Y})$ depends only on the $G$-cover $\widehat X_S\to \widehat Y_S$.
 Set $U_S=X-X_S$.
 Recall 
 $$
 r^{\l}_{X/Y}=\theta^{\l}(T^\vee_X)^{-1}\cdot \theta^{\l}(\pi^* T^\vee_Y)-1
 $$
is in $\Kr_0(G, X)^\wedge$ with trivial image in $\Kr_0(G, U_S)^\wedge$ under restriction. 
Observe that $f^\wedge_*(r^{\l}_{X/Y})$ only depends on the image of  $r^{\l}_{X/Y}$
in $\Gr_0(G, X)^\wedge$.
We have a commutative diagram
 \begin{equation*}\label{locdiagram}
 \begin{matrix}
 \Gr_1(G, U_S)^\wedge\to &\Gr_0(G, X_S)^\wedge\to & \Gr_0(G, X)^\wedge\to &\Gr_0(G, U_S)^\wedge\to 0\\
 \downarrow  &\downarrow\ \ \  &\beta\downarrow\ \ \ &\downarrow\ \ \ \ \ \ \\
  \Gr_1(G, \widehat U_S)^\wedge\to &\Gr_0(G, X_S)^\wedge\to  &\Gr_0(G, \widehat X_S)^\wedge\to &\Gr_0(G, \widehat U_S)^\wedge\to 0,\\
 \end{matrix}
 \end{equation*}
 where the rows are exact and are obtained by completing the standard equivariant localization sequences (\emph{cf.} \cite{ThomasonEquiv}).
 Here $\widehat U_S=\cup_{q\in S} X_{\Q_q}$,
 the vertical maps are given by base change;
the second vertical map is the identity.
Since the action of $G$ on $\widehat U_S$ is free, we have by \'etale descent
$  \Gr_1(G, \widehat U_S)=\Gr_1(\widehat U_S/G)=\oplus_{q\in S} \Gr_1(Y_{\Q_q})$.
(In particular, this also implies that $I^m_G\cdot \Gr_1(G, \widehat U_S)=(0)$ 
for $m>1$ and so  $\Gr_1(G, \widehat U_S)^\wedge= \Gr_1(G, \widehat U_S)$.)
As in \cite{ChinburgTameAnnals}, \cite{CEPTAnnals}, we can see that 
$f^\wedge_*\cdot (i_S)^\wedge_*: \Gr_0(G, X_S)^\wedge\to \Gr_0(G, X)^\wedge\to  \Cl(\Zl[G])^\wedge$
can be written as a composition $(f_S)_*: \Gr_0(G, X_S)^\wedge\to \oplus_{q\in S}\Kr_0({\mathbb F}_q[G])^\wedge\to \Cl(\Zl[G])^\wedge$ where the first arrow is given by the sum of 
the projective equivariant Euler characteristics for the $G$-schemes $f_q: X_{{\mathbb F}_q}\to \Spec({\mathbb F}_q)$. 

\begin{prop}\label{vanProp}
The
homomorphism $(f_S)_*$ vanishes on the image of the map $\Gr_1(G, \widehat U_S)^\wedge\to\Gr_0(G, X_S)^\wedge$.
\end{prop}
\begin{proof}
For this we need:
\begin{lemma}
The maps above give a  commutative diagram
 \begin{equation*}
 \begin{matrix}
 \Gr_1(Y_{\Q_q})&\xrightarrow{\sim} & \Gr_1(G, X_{\Q_q}) &\to &\Gr_0(G, X_{{\mathbb F}_q}) \\
  (g_{\Q_q})_*\downarrow\ \ \ \  & & \ \ \ \ \ &&(f_q)_* \downarrow \ \ \ \ \ \\
  \ \ \ \ \Q_q^*  \ \ & \xrightarrow {} &\Gr_1(\Q_q[G]) &\to &  \Gr_0({\mathbb F}_q[G]) \\
  \end{matrix}
 \end{equation*}
 where the bottom left horizontal arrow 
 $$
  \Q_q^*\to  \Gr_1(\Q_q[G])={\rm Hom}_{{\rm Gal}(\bar\Q_q/\Q_q)}(\scrR_{\bar\Q_q}(G), \bar\Q_q^*)
  $$ sends $\mu\in \Q_q^*$ to the character function $\chi\mapsto \mu^{{\rm deg}(\chi)}$.
  \end{lemma}
  \begin{proof}  Using the Quillen-Gersten spectral sequence we can see that 
  $ \Gr_1(Y_{\Q_q})$ is generated by the following two types of elements: 
  Constants $c\in \Q_q^*$ considered as giving automorphisms of the structure sheaf
  of $Y_{\Q_q}$ (\emph{i.e.} elements in the image of $(f_{\Q_q})^*: \Q_q^*=\Gr_1(\Spec(\Q_q))^*\to 
  \Gr_1(Y_{\Q_q})$), and elements in the image of $k(y)^*=\Gr_1(\Spec(k(y)))\to \Gr_1(Y_{\Q_q})$
  where $k(y)$ is the residue field of some closed point $y$ of $Y_{\Q_q}$ (see \cite[Example 4.6]{GilletAdv}).
  It is enough to check the commutativity on these two types of elements: 
  The statement for the first type follows easily from the fact 
  (a consequence of Lefschetz-Riemann-Roch, or of the main result of \cite{NakajimaInv}) that the $G$-character
  $[{\rm H}^0(X_{\Q_q}, \O_{X_{\Q_q}})]-[{\rm H}^1(X_{\Q_q}, \O_{X_{\Q_q}})]$ is a multiple of
  the regular character. The statement for the second type follows from an
   explicit calculation by using the normal basis theorem.
   \end{proof}
 
 The Lemma implies that the values of $(f_S)_*$ on the image of $\Gr_1(G, \widehat U_S)^\wedge\to\Gr_0(G, X_S)^\wedge$ are in the subgroup generated by the sums of images of
 the character functions $\chi\mapsto \mu^{{\rm deg}(\chi)}$.
These values are all
classes of virtually free ${\mathbb F}_q[G]$-modules in $\Kr_0({\mathbb F}_q[G])\subset \Gr_0({\mathbb F}_q[G])$
and this shows the desired vanishing.
\end{proof}
\smallskip

Proposition \ref{vanProp}, combined with a chase on the commutative
diagram of the previous page, now implies that $f^\wedge_*(r^\l_{X/Y})$  only depends on 
 the image of $r^\l_{X/Y}$ under the base change 
 $
 \beta: \Gr_0(G, X)^\wedge\to \Gr_0(G, \widehat X_S)^\wedge$.
 By flat base change for the cotangent complex \cite{IllusieCotangent} this image is the class of
 $$
 r^\l_{\widehat X_S/\widehat Y_S}:=\theta^\l(T_{\widehat X_S}^\vee)^{-1}\cdot \theta^\l(\pi^*T_{\widehat Y_S}^\vee)-1.
 $$
 Therefore, $f^\wedge_*(r^\l_{X/Y})$ only depends on $\widehat X_S\to \widehat Y_S$,
which, in turn, since $\widehat Y_S=\widehat X_S/G$, only depends on the $G$-scheme
$\widehat X_S$.
 
Combining all these, we now obtain the statement of Theorem \ref{thminput}.
\end{proof}
\end{subsection}


 \bibliographystyle{plain}
 
\bibliography{Galois}

\end{document}